\pdfoutput=1
\documentclass[a4paper]{amsart}

\title{Dimensional reduction and the equivariant Chern character}
\author{Augusto Stoffel}
\address{
  Max Planck Institute for Mathematics\\
  Vivatsgasse 7\\
  53111 Bonn\\
  Germany}
\email{astoffel@mpim-bonn.mpg.de}

\usepackage[utf8]{inputenc}
\usepackage[T1]{fontenc}

\usepackage{amsmath,amsthm,amssymb}
\usepackage{microtype}

\usepackage{hyperref}
\hypersetup{
  pdftitle={\csname @title\endcsname},
  pdfauthor={\authors},
  hidelinks}

\usepackage[hyperref,safeinputenc,maxbibnames=99]{biblatex}
\addbibresource{paper1.bib}

\usepackage[all,cmtip]{xy}

\xymatrixrowsep={1.5em}
\xymatrixcolsep={2.4em}
\objectmargin={0.9ex}

\newtheorem{theorem}{Theorem}
\newtheorem{proposition}[theorem]{Proposition}

\theoremstyle{definition}

\theoremstyle{remark}
\newtheorem{remark}[theorem]{Remark}

\newcommand\abs[1]{\lvert #1\rvert}

\newcommand\Diff{\operatorname{Diff}}

\newcommand\Fun{\operatorname{Fun}}

\newcommand\Isom{\operatorname{Isom}}

\newcommand\ch{\operatorname{ch}}

\newcommand\id{\operatorname{id}}

\newcommand\pr{\operatorname{pr}}
\newcommand\str{\operatorname{str}}
\newcommand\tr{\operatorname{tr}}
\newcommand\colim{\operatorname*{colim}}

\def\mystack#1{\mathrel{\vcenter{\offinterlineskip\ialign{$##$\cr#1\cr}}}}
\newcommand\threerightarrows{\mystack{\to\cr\to\cr\to}}
\newcommand\fourrightarrows{\mystack{\to\cr\to\cr\to\cr\to}}
\newcommand\threeleftarrows{\mystack{\leftarrow\cr\leftarrow\cr\leftarrow}}
\newcommand\fourleftarrows{\mystack{\leftarrow\cr\leftarrow\cr\leftarrow\cr\leftarrow}}

\newcommand\EFT{\mathhyphen\mathrm{EFT}}

\newcommand\pt{\mathrm{pt}}
\newcommand\spt{\mathrm{spt}}

\newcommand\Bord{\mathhyphen\mathrm{Bord}}
\newcommand\EBord{\mathhyphen\mathrm{EBord}}

\newcommand\Vect{\mathrm{Vect}}
\newcommand\SM{\mathrm{SM}}

\newcommand\mathhyphen{\textrm{-}}
\newcommand\mathemdash{\textrm{---}}

\newcommand\sslash{/\kern-0.7ex/}
\newcommand\backsslash{\backslash\kern-0.7ex\backslash}

\begin{document}

\begin{abstract}
  We propose a dimensional reduction procedure for
  $1\vert1$-dimensional supersymmetric Euclidean field theories (EFTs)
  in the sense of Stolz and Teichner.  Our construction is well-suited
  in the presence of a finite gauge group or, more generally, for
  field theories over an orbifold.  As an illustration, we give a
  geometric interpretation of the Chern character for manifolds with
  an action by a finite group.
\end{abstract}

\maketitle
 
\section{Introduction}
\label{sec:introduction}

In the context of topological quantum field theory (that is, the study
of symmetric monoidal functors $d\Bord \to \Vect$ and variants
thereof), dimensional reduction is the assignment of a
$(d-1)$-dimensional theory to a $d$-dimensional theory induced by the
functor of bordism categories $S^1\times\mathemdash \colon (d-1)\Bord
\to d\Bord$.  In the Stolz--Teichner framework of supersymmetric
Euclidean field theories (EFTs) \cites{MR2742432} {MR2079378},
dimensional reduction is a more subtle subject, but it can still be
implemented and provides geometric interpretations of classical
constructions in algebraic topology.  To give the basic idea, we first
recall that $0\vert1$-dimensional EFTs over a manifold $X$ are in
bijection, after passing to concordance classes, with de Rham
cohomology classes of $X$ \cite{MR2763085}.  On the other hand, super
parallel transport \cite{MR2407109} allows us to associate a field
theory $E_V \in 1\vert1\EFT(X)$ to any vector bundle with connection
$V \in \Vect^\nabla(X)$, and a similar statement relating
$1\vert1$-dimensional EFTs and topological $K$-theory is widely
expected.  Moreover, there is a dimensional reduction map
$\mathrm{red}$ between (groupoids of) field theories over $X$ that
recovers the Chern character, in the sense that the diagram
\begin{equation*}
  \xymatrix@R=0pt{
    & 1\vert1\EFT(X)\ar[r]^-{\operatorname{red}}\ar@{.>}[dd]
    & 0\vert1\EFT(X) \ar[dd] \\
    \mathrm{Vect}^{\nabla}(X) \ar@/^2ex/[ur]^E\ar@/_2ex/[dr] &&\\
    & K^0(X) \ar[r]^-{\ch} & H^{\mathrm{ev}}(X; \mathbb C)}
\end{equation*}
commutes \cites{arXiv:0711.3862} {arXiv:1202.2719}.

This paper is part of an ongoing project aiming to identify gauged
supersymmetric field theories as geometric cocycles for equivariant
cohomology theories \cites{StolzGaugedEFTs} {arXiv:1410.5500}
{arXiv:1610.02362}.  Our main goal here is to extend the above
dimensional reduction procedure for $1\vert1$-EFTs to the case where
the manifold $X$ is replaced by an orbifold $\mathfrak X$ (or, more
generally, any stack on the site $\SM$ of supermanifolds).  This will
be based on a series of functors between variants of the Euclidean
bordism categories over $\mathfrak X$,
\begin{equation}
  \label{eq:3}
  0\vert1\EBord(\Lambda\mathfrak X)
  \overset{\mathcal P}\leftarrow 0\vert1\EBord^{\mathbb T}(\Lambda\mathfrak X)
  \overset{\mathcal Q}\to 0\vert1\EBord^{\mathbb R\sslash\mathbb Z}(\Lambda\mathfrak X)
  \overset{\mathcal R}\to 1\vert1\EBord(\mathfrak X).
\end{equation}
Dimensional reduction of field theories (or twist functors) will then
be realized as the pull-push operation induced by $\mathcal R$,
$\mathcal Q$ and $\mathcal P$.  The two middle objects in
\eqref{eq:3}, which we call $\mathbb T$- respectively $\mathbb
R\sslash\mathbb Z$-equivariant bordisms over the inertia stack
$\Lambda\mathfrak X$, as well as the maps involving them, are
introduced in section~\ref{sec:dimensional-reduction}.  Here, $\mathbb
T = \mathbb R/\mathbb Z$ stands for the circle group and $\mathbb
R\sslash\mathbb Z$ is the stack arising from the action of $\mathbb Z$
on $\mathbb R$.  These are of course equivalent as group stacks, and
our terminology just intends to indicate which model for the circle is
directly involved in the definition of each bordism category.  (The
two equivariant bordism categories also turn out to be equivalent,
though this requires proof; see theorem~\ref{thm:Q-is-an-equiv}.)  We
also remark that $\mathcal R$ takes values in the substack of closed
bordisms.  This allows us to avoid delving into the somewhat long
definition of the full bordism category $1\vert1\EBord(\mathfrak X)$,
and focus on the stack $\mathfrak K(\mathfrak X)$ of closed, connected
bordisms, which we call Euclidean supercircles.

As a simple but illustrative application, we specialize to the case
where $\mathfrak X = X\sslash G$ is a global quotient orbifold and
give a field-theoretic interpretation of the simplest instance of
orbifold Chern character, namely the one concerning untwisted
cohomology of global quotients \cite{MR928402}.  It is possible to
extend the map $E$ above for an orbifold $\mathfrak X$ in place of the
manifold $X$; since the dimensional reduction only depends on the
values of a field theory on closed bordisms, we will only describe the
partition function $Z_V$ of the field theory $E_V$ in this paper.
That is, we construct a map
\begin{equation*}
  Z\colon \mathrm{Vect}^\nabla(\mathfrak X)
  \to C^\infty(\mathfrak K(\mathfrak X))
\end{equation*}
(cf.\ section~\ref{sec:parall-transp-field}).  From the discussion of
section~\ref{sec:bordism-categories} it will follow that
$0\vert1$-EFTs over the inertia orbifold $\Lambda\mathfrak X$ are
geometric cocycles for the so-called delocalized cohomology
$H^{\mathrm{ev}}_G(\hat X; \mathbb C)$---the codomain of the
equivariant Chern character $\mathrm{ch}_G$ (cf.\
section~\ref{sec:baum-connes-chern}).  Finally, in
section~\ref{sec:proof-global-quot-ch} we verify that the dimensional
reduction of $Z_V$ is a representative of $\ch_G(V)$.

\begin{theorem}
  \label{thm:global-quot-ch}
  Let $\mathfrak X = X\sslash G$ be the quotient stack arising from
  the action of a finite group on a manifold.  Then the diagram
  \begin{equation*}
    \xymatrix@R=0pt{
      & C^\infty(\mathfrak K(\mathfrak X))\ar[r]^-{\operatorname{red}}
      & 0\vert1\EFT(\Lambda \mathfrak X) \ar[dd] \\
      \mathrm{Vect}^{\nabla}(\mathfrak X) \ar@/^2ex/[ur]^Z\ar@/_2ex/[dr] &&\\
      & K^0_G(X) \ar[r]^-{\ch_G} & H^{\mathrm{ev}}_G(\hat X; \mathbb C)}
  \end{equation*}
  commutes, and moreover the vertical map induces a bijection after
  passing to concordance classes of field theories.
\end{theorem}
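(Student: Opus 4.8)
The plan is to separate the two assertions. The bijection after concordance is essentially already in place: by the discussion of section~\ref{sec:bordism-categories}, concordance classes of $0\vert1$-EFTs over an arbitrary stack $\mathfrak Y$ on $\SM$ are classified by the even complex de Rham cohomology of $\mathfrak Y$, and specializing to $\mathfrak Y = \Lambda\mathfrak X$, together with the identification of $H^{\mathrm{ev}}(\Lambda\mathfrak X;\mathbb C)$ with the delocalized cohomology $H^{\mathrm{ev}}_G(\hat X;\mathbb C)$ from section~\ref{sec:baum-connes-chern}, produces exactly the vertical map and shows it is a bijection. The real content is therefore the commutativity of the diagram, i.e.\ the claim that the class represented by $\red(Z_V)$ equals $\ch_G[V]$ for every $V\in\Vect^\nabla(\mathfrak X)$.

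To prove commutativity I would first reduce to a local, per-sector statement. Since $G$ is finite, the inertia stack splits as $\Lambda\mathfrak X \simeq \coprod_{[g]} X^g\sslash C_G(g)$, indexed by conjugacy classes, and both $H^{\mathrm{ev}}_G(\hat X;\mathbb C)$ and the groupoid $0\vert1\EFT(\Lambda\mathfrak X)$ decompose accordingly. It then suffices to match, on each $g$-sector, the restriction of $\red(Z_V)$ with the Baum--Connes representative recalled in section~\ref{sec:baum-connes-chern}: choosing a $G$-invariant connection $\nabla$ on $V$, this representative is the form $\tr\!\bigl(g\,e^{-F^\nabla/2\pi i}\bigr)\big|_{X^g}$, equivalently $\sum_\lambda \lambda\,\ch(V_\lambda)$ after the eigenbundle decomposition $V|_{X^g} = \bigoplus_\lambda V_\lambda$ of the fiberwise action of $g$.

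The heart of the proof is then a heat-kernel computation. By construction (section~\ref{sec:parall-transp-field}) the value of $Z_V$ on a Euclidean supercircle lying over the $g$-sector is the supertrace of the $g$-twisted super parallel transport of $E_V$ around the loop, a quantity of the form $\str\!\bigl(g\,e^{-\ell\,\mathcal D}\bigr)$ with $\ell$ the length and $\mathcal D$ the generator of the transport. Dimensional reduction, realized as the pull-push along $\mathcal R$, $\mathcal Q$ and $\mathcal P$ in \eqref{eq:3}, extracts the asymptotics of this supertrace as the supercircle degenerates, and I would evaluate the limit by a Mehler/Getzler rescaling argument. The Berezin integration over the odd direction of the supercircle, combined with the rescaling, produces the curvature exponential $e^{-F^\nabla/2\pi i}$ as a de Rham form, the insertion of $g$ localizes the supertrace to the fixed manifold $X^g$, and the scalar action of $g$ on each eigenbundle contributes the weights $\lambda$; under the identification $\Fun((X^g)^{0\vert1}) \cong \Omega^\bullet(X^g)$ used in section~\ref{sec:bordism-categories} to describe $0\vert1$-EFTs, the reduced function is precisely $\sum_\lambda \lambda\,\ch(V_\lambda)$, matching the previous paragraph.

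The main obstacle is this last limit: showing that the formal pull-push along \eqref{eq:3} genuinely implements the short-supercircle asymptotics of the twisted super parallel transport, with no spurious contributions, and that the two equivariant models $\mathbb T$ and $\mathbb R\sslash\mathbb Z$ record the $g$-twist in compatible ways (this is where theorem~\ref{thm:Q-is-an-equiv} enters). Once the reduced form is identified with the local index density, the remaining identity $\tr\!\bigl(g\,e^{-F^\nabla/2\pi i}\bigr) = \sum_\lambda \lambda\,\tr\!\bigl(e^{-F^\nabla_\lambda/2\pi i}\bigr)$ on $X^g$ is elementary linear algebra: a $G$-invariant $\nabla$ makes $g$ commute with $F^\nabla$ and act by the scalar $\lambda$ on $V_\lambda$, so the fiber trace factors over the eigenbundle splitting and the diagram commutes.
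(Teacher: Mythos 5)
Your overall architecture is right: the bijection claim is indeed just theorem~\ref{thm:0|1-EFT-orbifold} applied to $\Lambda\mathfrak X$, and commutativity does reduce to a sector-by-sector identification of the reduced form with $\sum_\lambda\lambda\,\ch(V_\lambda)$ on each $X^g$, with the final eigenbundle bookkeeping being elementary. But the step you call the heart of the proof --- a heat-kernel computation extracting ``the asymptotics of the supertrace as the supercircle degenerates'' via a Mehler/Getzler rescaling --- is not what the pull-push along \eqref{eq:3} does, and set up this way the argument would not go through. Dimensional reduction here is not a degeneration limit: the preimage under $\mathcal P$ of a $0\vert1$-manifold $\Sigma$ over $\Lambda\mathfrak X$ carries a principal $\mathbb T$-bundle $P\to\Sigma$ with connection of prescribed curvature, and $\mathcal R$ turns this $P$ into a supercircle of \emph{fixed} length $1$; there is no length parameter $\ell\to 0$ anywhere in the construction, hence no asymptotic expansion to evaluate. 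Moreover the operator whose supertrace is taken is the super parallel transport on the finite-rank bundle $(f^*V)/G$, so there is no infinite-dimensional heat kernel to rescale; the relevant input is the \emph{exact} identity $\mathrm{SP}=\exp(\nabla^2)$ for super parallel transport over one unit of super-time (the main theorem of \cite{arXiv:1202.2719}), not an index-density limit. The localization to $X^g$ is likewise not analytic: it is built into the geometry, since the $g$-component of $\hat X=\amalg_{g}X^g$ is where the versal family lands by construction.

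What your proposal is missing is the explicit computation of the image of the versal family $\Sigma_{\mathrm{versal}}$ under $\mathcal P^{-1}$, $\mathcal Q$, $\mathcal R$: one must check that the descent construction defining $\mathcal Q$ (section~\ref{sec:map-equiv-bordisms}, specialized to global quotients in section~\ref{sec:global-quotients}) produces over $\Pi TX^g$ precisely the $G$-bundle $Q_g\to P_g$ whose holonomy around the circle fibers is $g$, so that the holonomy of $(f^*V)/G$ around the resulting supercircle is $g\circ\bigoplus_i\mathrm{SP}_i$ and its supertrace is exactly $\sum_i\lambda_i\ch(\nabla_i)$, as in proposition~\ref{prop:10}. (Note also that theorem~\ref{thm:Q-is-an-equiv} is not what reconciles the $\mathbb T$- and $\mathbb R\sslash\mathbb Z$-equivariant models in the proof --- the paper explicitly does not use it there; what is used is the concrete description of $\mathcal Q$ on trivialized families.) Once this identification of the image family is in place, your final linear-algebra paragraph finishes the argument exactly as the paper does.
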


\begin{remark}
  In a subsequent paper \cite{arXiv:1801.03016}, we construct twists
  for $1\vert1$-EFTs over $\mathfrak X$ associated to classes in
  $H^3(\mathfrak X, \mathbb Z)$, using a representing gerbe with
  connection as input data, as well as twisted field theories from
  twisted vector bundles.  We also employ the dimensional reduction
  procedure given here to relate these twists and twisted field
  theories with more general versions of the orbifold Chern character.
  In particular, when the twist is trivial, the field theories in
  question do indeed have $Z$ as partition function.  This allows us
  to replace $C^\infty(\mathfrak K(\mathfrak X))$ with a suitably
  defined groupoid $1\vert1\EFT(\mathfrak X)$ of field theories over
  $\mathfrak X$ in the above theorem.
\end{remark}

While this work was in preparation, closely related preprints by
Daniel Berwick-Evans have appeared \cites{arXiv:1410.5500}
{arXiv:1510.07893}.  His approach is heavily inspired by ideas from
perturbative quantum field theory, while ours is more geometric,
putting group actions on stacks at the forefront.

\subsection{Terminology and background}
\label{sec:term-backgr}

For an extensive survey of the Stolz--Teichner program, see
\cite{MR2742432}.  The facts more directly relevant to this paper,
regarding $0\vert1$-dimensional field theories, can be found in
\cite{MR2763085}.  Concerning supermanifolds, we generally follow the
definitions and conventions of \textcite{MR1701597}, and in particular
we routinely use the functor of points formalism.  The necessary facts
about Euclidean structures are reviewed in
appendix~\ref{sec:eucl-superm}.

Vector bundles are always $\mathbb Z/2$-graded and over $\mathbb C$,
and $\Vect^\nabla$ denotes the stack of vector bundles with
connection. $C^\infty$ and $\Omega^*$ denote the sheaves of
complex-valued functions and differential forms.  In the category of
supermanifolds, the notions of principal bundles and connections mimic
the usual definitions \cite{MR1606851}.  If $G$ is a super Lie group
with super Lie algebra $\mathfrak g$, a principal $G$-bundle over the
base $X$ is a manifold $P$ with a free $G$-action and an invariant
submersion $P \to X$ which is locally isomorphic to $X \times G \to
X$.  A connection is a real form $\omega \in \Omega^1(P; \mathfrak g)$
of \emph{even} parity satisfying the usual conditions (to be
$G$-invariant and coincide with the Maurer-Cartan form of $G$ on the
fibers), and its curvature is $d\omega + 1/2[\omega,\omega] \in
\Omega^2(X, P \times_{\mathrm{ad}} \mathfrak g)$.  More generally, if
$X \to S$ is a submersion, then an $S$-family of differential forms,
or fiberwise form, is a section of some exterior power of
$\operatorname{Coker} (T^*S \to T^*X)$.  Fiberwise connections and
their curvature are families of forms defined in a similar fashion.

We treat stacks on the site $\SM$ of supermanifolds (where a covering
is a collection of jointly surjective local diffeomorphisms) in a
geometric way, meaning, for instance, that most of our diagrams
involving manifolds must be interpreted as diagrams in stacks, where
some of the objects happen to be representable sheaves.  A
differentiable stack is a stack $\mathfrak X$ that admits an atlas
$X_0 \to \mathfrak X$, or, equivalently, can be presented by a Lie
groupoid $X_1 \rightrightarrows X_0$.  We recommend the appendix of
\textcite{MR2763085} for a short introduction to stacks, and
\textcite{MR2817778} as a more detailed reference, including the
stacky perspective on orbifolds and cohomology of orbifolds.  The less
standard piece of descent theory needed in this paper concerns group
actions on stacks.  We offer a short overview (with further
references) in appendix~\ref{sec:group-actions-stacks}, where we also
record a lemma that may be of independent interest
(proposition~\ref{prop:6}).

If $\mathfrak X$ is a stack, we define its \emph{inertia} to be the
mapping stack
\begin{equation*}
  \Lambda\mathfrak X
  = \underline\Fun_{\mathrm{SM}}(\mathrm{pt}\sslash \mathbb Z, \mathfrak X).
\end{equation*}
More concretely, $\Lambda\mathfrak X$ is the fibered category whose
$S$-points are given by pairs $(x,\alpha)$ with $x\in \mathfrak X_S$
and $\alpha$ an automorphism of $x$.  A morphism $(x,\alpha) \to (x',
\alpha')$ is given by a morphism $\psi\colon x \to x'$ in $\mathfrak
X$ such that $\alpha'\circ \psi = \psi \circ \alpha$.  The stack
$\mathrm{pt}\sslash \mathbb Z$ can be thought of as a categorical
circle, and $\Lambda\mathfrak X$ is the stack of ``hidden loops'',
i.e., those loops that are not seen by the coarse moduli space of
$\mathfrak X$ (see e.g.\ \cite{MR2068317} for more information).
Notice that $\mathrm{pt}\sslash\mathbb Z$ is a group object in stacks,
and it follows that $\Lambda\mathfrak X$ is acted upon by it.
Concretely, such an action translates as an automorphism of
$\mathrm{id}_{\Lambda\mathfrak X}$, namely the natural transformation
assigning to $(x,\alpha)$ the automorphism $\alpha$.

\subsection{Acknowledgments}
\label{sec:acknowledgments}

This paper is based on a part of my Ph.D. thesis \cite{MR3553617}, and
I would like to thank my advisor, Stephan Stolz, for the guidance.  I
would also like to thank Bertram Arnold, Peter Teichner and Peter
Ulrickson for valuable discussions, Karsten Grove for the financial
support during my last semester as a graduate student (NSF grant
DMS-1209387), and the referee for many careful suggestions.

\section{Bordisms and field theories over an orbifold}
\label{sec:bordism-categories}

A $d$-dimensional topological (quantum) field theory, in the usual
definition of Atiyah and Segal, is a symmetric monoidal functor
\begin{equation*}
  E \in \Fun^\otimes(d\Bord, \Vect)
\end{equation*}
between the category of $d$-dimensional bordisms and the category of
vector spaces.  The domain has as objects closed $(d-1)$-dimensional
manifolds and as morphisms diffeomorphism classes of bordisms between
them.

\Textcite{MR2742432} consider a refinement of the above, where each
bordism is equipped with several additional geometric structures:
supersymmetry, meaning that a bordism is now a supermanifold of
dimension $d\vert\delta$; a Euclidean structure in the sense of
appendix~\ref{sec:eucl-superm}; and finally a smooth map to a fixed
manifold $X$.  In order to make sense of the idea that field theories
should depend smoothly on the input data, we are led to formulate the
resulting bordism category $d\vert\delta\EBord(X)$ as a (weak)
category internal to symmetric monoidal stacks.  This also allows us
to keep track of isometries between bordisms instead of just
considering equivalence classes of bordisms modulo isometry.

Once this framework is in place, it is clear how to replace the
manifold $X$ by a ``generalized manifold'', or stack, $\mathfrak X$:
an $S$-family of bordisms in $d\vert\delta\EBord(\mathfrak X)$ is
given by a submersion $\Sigma \to S$ of codimension $d\vert\delta$
with fiberwise Euclidean structure, an object of $\mathfrak X_\Sigma$
(which, by the Yoneda lemma, corresponds to a map $\psi\colon \Sigma
\to \mathfrak X$ in the realm of generalized manifolds), and lastly
some boundary information we will not detail here (cf.\
\cite[section~2.8]{arXiv:1801.03016}).  A morphism over $f\colon S' \to S$ in the
stack of bordisms is determined by a fiberwise isometry $F\colon
\Sigma' \to \Sigma$ covering $f$ (and suitably compatible with the
boundary information) together with a morphism $\xi$ between objects
of $\mathfrak X_{\Sigma'}$ as indicated in the diagram below.
\begin{equation*}
  \begin{gathered}
    \xymatrix@R-1.5em{ \Sigma'
      \ar[dd]\ar[rd]^F
      \ar@/^3ex/[rrd]^{\psi'}_{}="a"&&\\
      & \Sigma \ar[dd]\ar[r]^{\psi}  \ar_-\xi@{<=}"a"& \mathfrak X\\
      S' \ar[rd]^f &&\\
      & S &}
  \end{gathered}
\end{equation*}

Finally, Euclidean field theories of dimension $d\vert\delta$ over $\mathfrak
X$ are functors of internal categories:
\begin{equation*}
  d\vert\delta\EFT(\mathfrak X) =
  \Fun_{\mathrm{SM}}^\otimes(d\vert\delta\EBord(\mathfrak X), \mathrm{TV}),
\end{equation*}
where $\mathrm{TV}$ is an internal version of the category of
topological vector spaces.  These are contravariant objects on the
variable $\mathfrak X$, and we call two EFTs $E_0, E_1 \in
d\vert\delta\EFT(\mathfrak X)$ concordant if there exist a field
theory $E \in d\vert\delta\EFT(\mathfrak X \times \mathbb R)$ such
that $E \cong \pr_1^*E_0$ on $\mathfrak X \times (-\infty, 0)$ and $E
\cong \pr_1^*E_1$ on $\mathfrak X \times (1, \infty)$.

These observations are the foundation of an equivariant extension of
Stolz--Teichner program.  In this paper, we are only interested in the
cases $d\vert\delta = 0\vert1$ or $1\vert1$, so we can work with
simplified definitions, which we discuss in the remainder of this
section.

\subsection{Dimension $0\vert1$}
\label{sec:dimension-0vert1}

Since every $0\vert1$-dimensional bordism is closed, a $0\vert1$-EFT
is nothing but the assignment of a complex number to each Euclidean
$0\vert1$-manifold, in a way that is invariant under isometries and
such that disjoint unions map to products.  In particular, a
$0\vert1$-EFT is determined by its values on connected bordisms.
Thus, we can just define
\begin{equation*}
  0\vert1\EFT(\mathfrak X) = \Fun_\SM(\mathfrak B(\mathfrak X),
  \mathbb C) = C^\infty(\mathfrak B(\mathfrak X)),
\end{equation*}
where $\mathfrak B(\mathfrak X)$ is some model for the full substack
comprising fiberwise connected bordisms in $0\vert1\EBord(\mathfrak
X)$.  Concretely, we take it to be
\begin{equation*}
  \mathfrak B(\mathfrak X) = \Pi T\mathfrak X \sslash \Isom(\mathbb
  R^{0\vert1}),
  \text{ where }
  \Pi T\mathfrak X = \underline\Fun_\SM(\mathbb R^{0\vert1}, \mathfrak
  X).
\end{equation*}
Here, $\Fun_\SM$ denotes the groupoid of fibered functors and natural
transformations over $\SM$, while $\underline\Fun_\SM$ denotes the
mapping stack.  Thus, $\mathfrak B(\mathfrak X)$ is the quotient stack
arising from a group action on a stack; see
appendix~\ref{sec:group-actions-stacks}.  The notation $\Pi T\mathfrak
X$ is motivated by the fact that when $\mathfrak X$ is a manifold, the
internal hom in question is in fact representable by the
parity-reversed tangent bundle.

\begin{theorem}
  \label{thm:0|1-EFT-orbifold}
  For any differentiable stack $\mathfrak X$, there is a natural
  bijection
  \begin{equation*}
    0\vert1\EFT(\mathfrak X) \cong \Omega^{\mathrm{ev}}_{\mathrm{cl}}(\mathfrak X)
  \end{equation*}
  between $0\vert1$-EFTs over $\mathfrak X$ and closed differential
  forms of even parity.  If $\mathfrak X$ is an orbifold, passing to
  concordance classes gives an isomorphism with even de Rham
  cohomology:
  \begin{equation*}
    0\vert1\EFT(\mathfrak X)/\mathrm{concordance}
    \cong H^\mathrm{ev}_{\mathrm{dR}}(\mathfrak X).  
  \end{equation*}
\end{theorem}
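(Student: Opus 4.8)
The plan is to reduce the statement to an invariant-theoretic computation on the odd tangent bundle and then descend along an atlas, taking the known manifold case as a black box. First I would unwind the definitions. Since $\mathfrak B(\mathfrak X) = \Pi T\mathfrak X \sslash \Isom(\mathbb R^{0\vert1})$ is a quotient of a stack by a group action, its functions are the invariant functions on $\Pi T\mathfrak X$ (appendix~\ref{sec:group-actions-stacks}), so that
\begin{equation*}
  0\vert1\EFT(\mathfrak X) = C^\infty(\mathfrak B(\mathfrak X))
  = C^\infty(\Pi T\mathfrak X)^{\Isom(\mathbb R^{0\vert1})}.
\end{equation*}
The task is thus to identify these invariants with closed even forms.

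Next I would recall the manifold case, emphasizing its naturality. For a manifold $X$ the internal hom $\Pi TX = \underline\Fun_\SM(\mathbb R^{0\vert1}, X)$ is representable by the parity-reversed tangent bundle, and the classical identification $C^\infty(\Pi TX) \cong \Omega^*(X)$ holds, the $\mathbb Z/2$-grading being form degree modulo $2$. Under this identification the $\Isom(\mathbb R^{0\vert1})$-action decomposes into its reflection and super-translation parts: the reflection $\theta \mapsto -\theta$ acts on a $p$-form by $(-1)^p$, so its invariants are exactly the even-degree (equivalently, even-parity) forms, while the super-translation subgroup acts through the de Rham differential, so that a form over an odd parameter $\eta$ transforms as $\omega \mapsto \omega + \eta\, d\omega$ and invariance is equivalent to $d\omega = 0$. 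This is the computation of \cite{MR2763085}, yielding $C^\infty(\Pi TX)^{\Isom(\mathbb R^{0\vert1})} = \Omega^{\mathrm{ev}}_{\mathrm{cl}}(X)$; the point I would stress is that the resulting isomorphism is natural in $X$.

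I would then descend. Choosing an atlas $X_0 \to \mathfrak X$ with presenting Lie groupoid $X_1 \rightrightarrows X_0$, the first task is to verify that $\Pi T = \underline\Fun_\SM(\mathbb R^{0\vert1}, -)$ carries this to a groupoid $\Pi T X_1 \rightrightarrows \Pi T X_0$ presenting $\Pi T\mathfrak X$; this amounts to checking that mapping out of the infinitesimally small object $\mathbb R^{0\vert1}$ preserves atlases and submersions. Granting this, both $C^\infty$ of the groupoid-presented stack and the (naive) forms $\Omega^*(\mathfrak X)$ are equalizers of the two structure maps $s^*, t^*$ from the value at $X_0$ to the value at $X_1$, and taking $\Isom(\mathbb R^{0\vert1})$-invariants, being a limit, commutes with these equalizers. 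By the naturality established above, the manifold-level isomorphisms at $X_0$ and $X_1$ are compatible with $s^*$ and $t^*$, so they assemble into
\begin{equation*}
  0\vert1\EFT(\mathfrak X) = C^\infty(\Pi T\mathfrak X)^{\Isom(\mathbb R^{0\vert1})}
  \cong \Omega^{\mathrm{ev}}_{\mathrm{cl}}(\mathfrak X),
\end{equation*}
proving the first assertion.

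Finally, for the statement about concordance classes, I would transport the definition of concordance through this bijection: two forms $\omega_0, \omega_1 \in \Omega^{\mathrm{ev}}_{\mathrm{cl}}(\mathfrak X)$ are concordant precisely when there is a closed even form on $\mathfrak X \times \mathbb R$ restricting to $\pr^*\omega_i$ on the two ends. The standard argument—decomposing such a form along the $\mathbb R$-coordinate and integrating over the fiber to produce an explicit primitive—shows that $\omega_0$ and $\omega_1$ are concordant if and only if they are cohomologous, so that concordance classes are the even cohomology of the naive de Rham complex. The remaining, and most delicate, point is to identify this with $H^{\mathrm{ev}}_{\mathrm{dR}}(\mathfrak X)$: this is exactly where the orbifold hypothesis enters, since for a proper \'etale presenting groupoid the naive de Rham complex computes the genuine de Rham cohomology and homotopy invariance holds, whereas for a general differentiable stack one would need the full simplicial--de Rham double complex. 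I expect the descent step—verifying that $\Pi T$ preserves the presentation and that invariants commute with descent—to be the main obstacle for the first assertion, and the comparison of naive and genuine de Rham cohomology to be the crux of the second.
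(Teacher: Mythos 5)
Your proposal is correct and follows essentially the same route as the paper: reduce to the manifold case of \textcite{MR2763085}, check that $\Pi TX_1 \rightrightarrows \Pi TX_0$ presents $\Pi T\mathfrak X$, apply the descent lemma for quotients by group actions (proposition~\ref{prop:6}), and then handle concordance via the Stokes argument together with the fact that for orbifolds the naive complex $\Omega^*(\mathfrak X)$ computes de Rham cohomology. The only cosmetic difference is that you take $\Isom(\mathbb R^{0\vert1})$-invariants before descending along the atlas while the paper descends first; the two operations commute, so this changes nothing.
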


Here, $\Omega^{\mathrm{ev}}_{\mathrm{cl}}(\mathfrak X) =
\Fun_\SM(\mathfrak X, \Omega^{\mathrm{ev}}_{\mathrm{cl}})$, where
$\Omega^{\mathrm{ev}}_{\mathrm{cl}}$ is the sheaf on $\SM$ of even,
closed differential forms.

\begin{proof}
  For $\mathfrak X$ a manifold, this is theorem~1 in
  \textcite{MR2763085}, and the main ingredient of the proof is to
  identify the action of $\Isom(\mathbb R^{0\vert1}) = \mathbb
  R^{0\vert1} \rtimes \mathbb Z/2$ on $\Pi TX$.  It turns out that on
  $C^\infty(\Pi TX) = \Omega^*(X)$, $\mathbb Z/2$ acts as the mod $2$
  grading involution, and the odd vector field generating the $\mathbb
  R^{0\vert1}$-action is precisely the de Rham differential.

  Now, let $X_1 \rightrightarrows X_0$ be a groupoid presentation of
  $\mathfrak X$.  Then $\Pi TX_1 \rightrightarrows \Pi TX_0$ is a
  groupoid presentation of $\Pi T\mathfrak X$, since both stacks
  assign a groupoid equivalent to
  \begin{equation*}
    \SM(S \times \mathbb R^{0\vert1}, X_1) \rightrightarrows \SM(S
    \times \mathbb R^{0\vert1}, X_0)
  \end{equation*}
  to any contractible $S$.  It follows
  \cite[proposition~7.13]{MR2763085} that
  \begin{equation*}
    C^\infty(\Pi T\mathfrak X)
    \cong \lim(\Omega^*(X_0) \rightrightarrows \Omega^*(X_1))
    = \Omega^*(\mathfrak X).
  \end{equation*}
  The $\Isom(\mathbb R^{0\vert1})$-action on $\Omega^*(\mathfrak X)
  \subset \Omega^*(X_0)$ is, again, generated by the de Rham
  differential and the $\mathbb Z/2$-grading operator.

  Now, $0\vert1\EFT(\mathfrak X) = \Fun_{\mathrm{SM}}(\Pi T\mathfrak
  X\sslash \Isom(\mathbb R^{0\vert1}), \mathbb C)$ can be calculated
  from proposition~\ref{prop:6}, and is given by
  \begin{equation*}
    \lim(C^\infty(\Pi T\mathfrak X) \rightrightarrows C^\infty(\Pi
    T\mathfrak X \times \Isom(\mathbb R^{0\vert1}))) =
    \Omega^*(\mathfrak X)^{\Isom(\mathbb R^{0\vert1})} =
    \Omega^{\mathrm{ev}}_{\mathrm{cl}}(\mathfrak X).
  \end{equation*}
  By the Stokes theorem, forms in
  $\Omega^{\mathrm{ev}}_{\mathrm{cl}}(X_0)$ are concordant if and only
  if they are cohomologous.  The same type of argument shows that
  concordance through closed, $X_1$-invariant forms is the same
  relation as being cohomologous in the chain complex
  $\Omega^*(\mathfrak X)$.  Thus
  \begin{equation*}
    0\vert1\EFT(\mathfrak X)/\mathrm{concordance} \cong
    H^{\mathrm{ev}}(\Omega^*(\mathfrak X), d)
  \end{equation*}
  For orbifolds, the right-hand side can be taken as the definition of
  de Rham cohomology \cite[corollary~25]{MR2172499}.
\end{proof}

\begin{remark}
  Differential forms and cohomology classes of odd degree are
  similarly related to field theories twisted by the basic twist
  $\mathcal T_1$ of \cite[definition 6.2]{MR2763085}.  This statement
  is proven similarly, using, as in \cite{MR2763085}, the fact that
  sections of $\mathcal T_1$ correspond to closed, odd differential
  forms (see also \cite[section~2]{arXiv:1801.03016}, where more
  general twists are considered).
\end{remark}

\subsection{Dimension $1\vert1$}
\label{sec:dimension-1vert1}

In order to construct the functor of internal categories $\mathcal R$
of diagram \eqref{eq:3}, all details about the stack of objects of
$1\vert1\EBord(\mathfrak X)$ and non-closed bordisms are entirely
irrelevant; this is, again, due to the fact that the domain of
$\mathcal R$ has trivial object stack.  Thus, it suffices to work with
the moduli stack of closed and connected bordisms in
$1\vert1\EBord(\mathfrak X)$, which we will also call the stack of
\emph{Euclidean supercircles over $\mathfrak X$} and denote by
$\mathfrak K(\mathfrak X)$.

The moduli stack $\mathfrak K(\mathfrak X)$ of Euclidean supercircles
over $\mathfrak X$ is defined as follows.  An object $(K, \psi)$ of
$\mathfrak K(\mathfrak X)$ over $S$ is given by an $S$-family $K$ of
closed, connected Euclidean $1\vert1$-manifolds together with a map
$\psi\colon K \to \mathfrak X$.  A morphism $(K', \psi') \to (K,
\psi)$ over a map $f\colon S' \to S$ is given by a fiberwise isometry
$F\colon K' \to K$ covering $f$ together with a $2$-morphism $\psi'
\rightarrow \psi \circ F$; compositions are performed in the obvious
way.  The data of a morphism can be summarized by the following
diagram.
\begin{equation*}
  \begin{gathered}
    \xymatrix@R-1.5em{
      K' \ar[dd]\ar[rd]^F
      \ar@/^3ex/[rrd]^{\psi'}_{}="a"&&\\
      & K \ar[dd]\ar[r]^{\psi}  \ar_-\xi@{<=}"a"& \mathfrak X\\
      S' \ar[rd]^f &&\\
      & S &}
  \end{gathered}
\end{equation*}

\begin{remark}
  A complete definition of the bordism category
  $1\vert1\EBord(\mathfrak X)$ is given in \cite{arXiv:1801.03016}.
  It is easy to see that $\mathfrak K(\mathfrak X)$, as given here, is
  indeed the substack of closed and connected bordisms there.
  Alternatively, the reader may prefer to think of the present
  description of $\mathfrak K(\mathfrak X)$ as being sufficiently
  reasonable, and thus a sanity check for the more general
  construction.
\end{remark}

A detailed study of the stack $\mathfrak K = \mathfrak K(\mathrm{pt})$
is given in section~\ref{sec:supercircles}.  Examples of (families of)
supercircles can be obtained by choosing a ``length'' parameter
$l\colon S \to \mathbb R^{1\vert1}_{>0}$, and then letting
\begin{equation*}
  K_l = (S \times \mathbb R^{1\vert1})/ \mathbb Z l
\end{equation*}
be given by the orbit space of the translation by $l$.
Proposition~\ref{prop:3} shows that, at least locally in $S$, every
supercircle is of this form (but not canonically).  Moreover, any
morphism $K_{l'} \to K_l$ is determined by a smooth map $S' \to
\mathbb R^{1\vert1} \rtimes \mathbb Z/2$, which fixes a certain
relation between $l'$ and $l$; see section~\ref{sec:supercircles} for
more details.

In order to define the functor $\mathcal R$ in
section~\ref{sec:r-sslash-z-equiv-bord}, we will use an alternative
method to construct supercircles, provided by the following theorem.

\begin{theorem}
  \label{thm:1}
  Let $\Sigma \to S$ be an $S$-family of Euclidean $0\vert
  1$-manifolds and $P \to \Sigma$ a principal $\mathbb T$-bundle.
  Then a fiberwise (in $S$) connection form $\omega$ on $P$ whose
  curvature agrees with the tautological $2$-form $\zeta$ on $\Sigma$
  canonically determines a Euclidean structure on $P$.  Isometries of
  $P$ correspond bijectively to connection-preserving bundle maps
  covering an isometry of $\Sigma$.
\end{theorem}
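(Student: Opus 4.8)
The plan is to exhibit the Euclidean structure on $P$ concretely as a super-Euclidean framing and then read off the isometries from that framing. Recall from appendix~\ref{sec:eucl-superm} that the group $\Isom(\mathbb R^{1\vert1}) = \mathbb R^{1\vert1}\rtimes \mathbb Z/2$ contains no dilations, and its translation part is the $1\vert1$-dimensional super Lie algebra spanned by one odd generator $D$ and the even generator $D^2 = \tfrac12[D,D]$. Consequently a fiberwise Euclidean structure on a $1\vert1$-family over $S$ is the same datum as a nowhere-vanishing fiberwise odd vector field $D$ whose bracket relations reproduce the flat model, namely $D^2$ nowhere zero, $[D,D^2]=0$ and $[D^2,D^2]=0$, with the reflection in $\mathbb Z/2$ acting by $D\mapsto -D$. (The normalization $D^2 = H$ for a prescribed even field $H$ is exactly what fixes the length scale.) So the task reduces to producing such a $D$ on $P$ canonically from $(\omega, \Sigma)$.

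First I would use the connection to split the fiberwise tangent bundle. Let $H$ be the fundamental vector field of the $\mathbb T$-action, the even vertical field normalized by $\omega(H)=1$, so that $T(P/S) = \ker\omega\oplus\langle H\rangle$. Writing $D_\Sigma = \partial_\theta$ for the odd framing supplied by the $0\vert1$-Euclidean structure on $\Sigma$, I define $D$ to be its horizontal lift $\widetilde{D_\Sigma}$, the unique odd field with $\omega(D)=0$ projecting to $D_\Sigma$.

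The key step is the bracket computation. Applying the horizontal-lift identity $[\widetilde X,\widetilde Y] = \widetilde{[X,Y]} - (d\omega)(X,Y)\,H$ together with $D_\Sigma^2 = \partial_\theta^2 = 0$ and the hypothesis $d\omega = \zeta$, I obtain
\begin{equation*}
  D^2 = \tfrac12[D,D] = -\tfrac12\,\zeta(D_\Sigma,D_\Sigma)\,H .
\end{equation*}
The tautological $2$-form is normalized precisely so that $-\tfrac12\,\zeta(D_\Sigma,D_\Sigma)=1$, i.e.\ so that $D^2 = H$; this matching of the definition of $\zeta$ from the $0\vert1$-structure with the desired relation is the one point where a real computation is needed, and I expect it to be the main (though short) obstacle, everything downstream being formal. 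The relations $[D,H]=0$ and $[H,H]=0$ hold because $H$ is vertical and $\mathbb T$-invariant for the (abelian, invariant) connection. Since $D$ and $H=D^2$ thus satisfy exactly the relations of the super-Euclidean translation algebra, the super version of Lie's theorem yields local coordinates in which $D$ is the standard left-invariant odd field, so the transition maps lie in $\Isom^+(\mathbb R^{1\vert1})$; the reflection $D\mapsto -D$ accounts for the remaining $\mathbb Z/2$, under which $\zeta$ is invariant (being even in $\theta$), so the construction is compatible with all of $\Isom(\mathbb R^{0\vert1})$. This proves that $(\omega,\Sigma)$ canonically determines a Euclidean structure, and the construction is manifestly natural in $S$.

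Finally, for the isometry statement I would observe that a fiberwise isometry of $P$ is precisely a diffeomorphism preserving $D$, hence also $H=D^2$. Preserving $H$ means commuting with the $\mathbb T$-action, i.e.\ being a bundle map; preserving $D=\widetilde{D_\Sigma}$ forces preservation of the horizontal distribution $\ker\omega$, hence of $\omega$, so the map is connection-preserving and covers a diffeomorphism of $\Sigma$ carrying $D_\Sigma$ to $D_\Sigma$, i.e.\ a $0\vert1$-isometry. Conversely, a connection-preserving bundle map covering an isometry of $\Sigma$ preserves $\omega$, hence $H$ and $\ker\omega$, hence $D$ and the entire Euclidean structure. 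This establishes the claimed bijection.
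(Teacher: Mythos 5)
Your argument is correct in substance but takes a genuinely different route from the paper's. The paper proves this statement as theorem~\ref{thm:K1-versus-BT} by normalization: after trivializing $\Sigma$ and $P$ locally, it writes an arbitrary fiberwise connection as $\omega = dt + (f_1 + f_0\theta)\,d\theta$, notes that the curvature condition forces $f_0 = -1$, and gauge-transforms away $f_1$ so that $\omega$ becomes the standard form $dt - \theta\,d\theta$; proposition~\ref{prop:8} (the stabilizer of this $1$-form in $\Diff(\mathbb R^{1\vert1})$ is exactly $\Isom(\mathbb R^{1\vert1})$) then identifies both $\mathfrak K_1$ and $\mathfrak B^{\mathbb T}$ with $\pt\sslash\Isom(\mathbb T^{1\vert1})$. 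You instead build the Euclidean structure invariantly, as the horizontal lift $D = \widetilde{D_\Sigma}$, and use the curvature identity to force $D^2 = H$; this makes the word ``canonically'' transparent and avoids choosing trivializations, at the cost of having to know that an odd $D$ with $D, D^2$ framing the vertical tangent bundle is the same thing as a Euclidean structure in the chart sense. That equivalence is exactly the content of propositions~\ref{prop:8}, \ref{prop:14} and~\ref{prop:15} in the appendix (your ``super Lie's theorem'' step is proposition~\ref{prop:14}), so you are using the same ingredients as the paper, just packaged more geometrically. Two small remarks: the conditions $[D,D^2]=0$ and $[D^2,D^2]=0$ you impose are automatic by the graded Jacobi identity, so the only real requirement is that $D$ and $D^2$ frame $T(P/S)$; and the normalization $-\tfrac12\zeta(D_\Sigma,D_\Sigma)=1$ that you flag as the one genuine computation is indeed where the content lives---in the paper it appears as the equation $f_0=-1$, and it should be checked against the model $\omega = dt - \theta\,d\theta$, $D = \partial_\theta - \theta\partial_t$ to pin down the sign conventions. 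Your treatment of the isometry statement (an isometry preserves $D$ up to sign, hence $H = D^2$ exactly, hence the $\mathbb T$-action and $\ker\omega$, hence $\omega$; conversely a connection-preserving bundle map over a $0\vert1$-isometry carries $\widetilde{D_\Sigma}$ to $\pm\widetilde{D_\Sigma}$) is complete and recovers what the paper extracts from proposition~\ref{prop:8}.
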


This is just a restatement of theorem~\ref{thm:K1-versus-BT}, proven
at the end of the paper.

\begin{remark}
  \label{rem:1}
  To see why the data of $\omega$ is essential here, notice that the
  short exact sequence of super Lie groups
  \begin{equation*}
    1 \to \mathbb R \to \mathbb R^{1\vert1} \to \mathbb R^{0\vert1} \to 1
  \end{equation*}
  is not split.  As a consequence, the cartesian product of Euclidean
  manifolds of dimensions $1$ and $0\vert 1$ is not endowed with a
  canonical Euclidean structure.  This makes dimensional reduction in
  our setting quite subtle, since ``crossing with $S^1$'' is not a
  well-defined operation in the Euclidean category, and therefore
  there is no direct functor $S^1 \times \textrm{---}\colon
  0\vert1\EBord \to 1\vert1\EBord$.
\end{remark}

Finally, we remark that every $1\vert1$-EFT over
$\mathfrak X$ determines a smooth function on $\mathfrak K(\mathfrak
X)$, the \emph{partition function} of the theory.  Again, this is an
immediate consequence of the fact that the empty manifold, being the
monoidal unit in the bordism category, is required to map to the
vector space $\mathbb C$.

\section{Dimensional reduction}
\label{sec:dimensional-reduction}

The upshot of section~\ref{sec:bordism-categories} is that it suffices
to discuss the functors \eqref{eq:3} of internal categories in terms
of the corresponding substacks of (fiberwise) closed and connected
families of bordisms; we reserve the term \emph{moduli stack} for
these objects.  We have already discussed $\mathfrak B(\mathfrak X)$
and $\mathfrak K(\mathfrak X)$ in
section~\ref{sec:bordism-categories}.  The two middle moduli stacks,
as well as the maps
\begin{equation*}
  \mathfrak B(\Lambda\mathfrak X) 
  \xleftarrow{\mathcal P}
    \mathfrak B^{\mathbb T}(\Lambda\mathfrak X) 
  \xrightarrow{\mathcal Q}
     \mathfrak B^{\mathbb R\sslash\mathbb Z}(\Lambda\mathfrak X)
  \xrightarrow{\mathcal R} \mathfrak K(\mathfrak X)
\end{equation*}
relating them, will be defined in the ensuing subsections.  We will
refer to the two middle stacks as the $\mathbb T$-equivariant and
$\mathbb R\sslash\mathbb Z$-equivariant moduli stacks of Euclidean
$0\vert1$-manifolds over $\Lambda\mathfrak X$.

The lack of a direct map from left to right in the above span of
moduli stacks is due to a subtlety of super Euclidean geometry: if
$\Sigma$ is a Euclidean $0\vert1$-manifold, the product $S^1 \times
\Sigma$ does not come with a canonical Euclidean structure; to choose
one essentially amounts to the choice of preimage along $\mathcal P$
(cf.\ theorem~\ref{thm:1} and remark~\ref{rem:1}).  This is not a
serious issue for us, since $\mathcal P$ induces a bijection between
the set of functions on each moduli stack (or, equivalently, between
field theories based on each variant of the bordism category; see
proposition~\ref{prop:5}).

Following the physical (and, by now, mathematical) jargon, restriction
of $1\vert1$-EFTs (or just functions on $\mathfrak K(\mathfrak X)$) to
$0\vert1$-EFTs via the above maps of bordism stacks will be referred
to as dimensional reduction.  Our motivation for doing this is that
the stack $\mathfrak K(\mathfrak X)$ of Euclidean supercircles over
$\mathfrak X$ is ``infinite dimensional'', and therefore unwieldy to
analysis; dimensional reduction allows us to probe its geometry by
means of $0\vert1$-dimensional gadgets over $\mathfrak X$.

To further motivate our dimensional reduction procedure, note that
$\mathcal Q$ is an equivalence of stacks
(theorem~\ref{thm:Q-is-an-equiv}), even though its inverse does not
admit a nice geometric description.  Thus, $\mathfrak B^{\mathbb
  T}(\Lambda\mathfrak X)$ and $\mathfrak B^{\mathbb R\sslash\mathbb
  Z}(\Lambda\mathfrak X)$ can be seen as different presentations of
the same entity; the former presentation has a direct relationship
with $\mathfrak B(\Lambda\mathfrak X)$, while the latter leads us to a
suitable definition of a map to $\mathfrak K(\mathfrak X)$.

\begin{remark}
  To understand the relevance of $\mathbb R\sslash\mathbb Z$-actions
  for dimensional reduction, we can consider a naive replacement for
  the composition $\mathcal R \circ \mathcal Q$: instead of performing
  the descent constructions of section~\ref{sec:map-equiv-bordisms},
  we could simply perform a pullback along $P \to \Sigma$.  Then it is
  easy to see that, with these modifications,
  theorem~\ref{thm:global-quot-ch} would recover the naive Chern
  character
  \begin{equation*}
    K_G^0(X)
    \xrightarrow{\alpha} K^0(EG \times_G X)
    \xrightarrow{\mathrm{ch}} H^{\mathrm{ev}}(EG \times_G X)
    = H_G^{\mathrm{ev}}(X)
  \end{equation*}
  (or, more precisely, its pullback to $\hat X$, as defined in
  \eqref{eq:10}).  Here, the map $\alpha$ is given, at the level of
  vector bundles, by the homotopy quotient construction.  Thus, as
  explained at the end of section~\ref{sec:baum-connes-chern}, this
  alternative construction forgets too much information.
\end{remark}

At the end of this section, to illustrate the ideas, we specialize
these constructions to the case where $\mathfrak X = X\sslash G$ is a
global quotient by a finite group.

\subsection{The $\mathbb R\sslash\mathbb Z$-equivariant moduli stack
  and the map $\mathcal R$}
\label{sec:r-sslash-z-equiv-bord}

We define a stack $\mathfrak B^{\mathbb R\sslash\mathbb
  Z}(\Lambda\mathfrak X)$ where an object over $S$ is given by the
following data:
\begin{enumerate}
\item a family $\Sigma \to S$ of connected Euclidean $0|1$-manifolds,
\item a principal $\mathbb T$-bundle $P \to \Sigma$ with a fiberwise
  connection $\omega$ whose curvature agrees with the tautological
  (fiberwise) $2$-form $\zeta$ on $\Sigma$ (see
  appendix~\ref{sec:eucl-superm}), and
\item an $\mathbb R\sslash\mathbb Z$-equivariant map $\psi \colon P
  \to \Lambda\mathfrak X$ with equivariance datum $\rho$, where
  $\mathbb R\sslash\mathbb Z$ acts on $P$ and $\Lambda\mathfrak X$ via
  the usual homomorphisms $\mathbb R\sslash\mathbb Z \to \mathbb T$
  respectively $\mathbb R\sslash\mathbb Z \to
  \mathrm{pt}\sslash\mathbb Z$.
\end{enumerate}
(Recall that $\mathbb R\sslash\mathbb Z$-equivariance is not just a
condition on $\psi$, but rather extra data encoded by the $2$-morphism
$\rho$, see appendix~\ref{sec:group-actions-stacks}).  We will usually
denote this object $(\Sigma, P, \psi, \rho)$ or, diagrammatically,
\begin{equation*}
  \xymatrix{P \ar[r]^\psi_{\mathbb R\sslash\mathbb Z} \ar[d] & \Lambda\mathfrak X \\ \Sigma.}
\end{equation*}
A morphism $(\Sigma', P', \psi', \rho') \to (\Sigma, P, \psi, \rho)$
covering a map of supermanifolds $S' \to S$ is given by
\begin{enumerate}
\item a fiberwise isometry $F \colon\Sigma' \to \Sigma$ covering
  $S' \to S$,
\item a connection-preserving bundle map $\Phi\colon P' \to P$
  covering $F$, and
\item an equivariant $2$-morphism $\xi\colon \psi' \to \psi \circ
  \Phi$.
\end{enumerate}
Compositions are performed as suggested by the geometry.

Now we discuss the map $\mathcal R\colon \mathfrak B^{\mathbb R\sslash
  \mathbb Z}(\Lambda\mathfrak X) \to \mathfrak K(\mathfrak X)$.  An
object $(\Sigma, P, \psi, \rho)$ over $S$ is mapped to the supercircle
over $\mathfrak X$ consisting of
\begin{enumerate}
\item the family of $1|1$-dimensional manifolds $P$ endowed with the
  fiberwise Euclidean structure determined by $\omega$ (see
  theorem~\ref{thm:1}), and
\item the map $P \to \mathfrak X$ obtained by composing $\psi$ with the
  forgetful map $\Lambda\mathfrak X \to \mathfrak X$.
\end{enumerate}
Notice that this construction forgets the $\mathbb T$-action on $P$ as
well as the equivariance datum $\rho$.  To define $\mathcal R$ at the
level of morphisms, recall, again by theorem~\ref{thm:1}, that a
connection-preserving bundle map $P' \to P$ covering a fiberwise
isometry $\Sigma' \to \Sigma$ is a fiberwise (over $S$) isometry with
respect to the Euclidean structures on $P'$, $P$.

\subsection{The $\mathbb T$-equivariant moduli stack and the map
  $\mathcal P$}
\label{sec:t-equiv-bord}

For any stack $\mathfrak X$, we define $\mathfrak B^{\mathbb
  T}(\mathfrak X)$ to be the stack whose $S$-points are given by an
$S$-family of connected Euclidean $0\vert1$-manifolds $\Sigma \to S$
together with two pieces of data:
\begin{enumerate}
\item a principal $\mathbb T$-bundle $P \to \Sigma$ with a fiberwise
  connection $\omega$ whose curvature agrees with the tautological
  $2$-form $\zeta$ on $\Sigma$,
\item a map $\psi\colon \Sigma \to \mathfrak X$.
\end{enumerate}
Morphisms between two objects $(\Sigma', P', \psi')$ and $(\Sigma, P,
\psi)$ over $f\colon S' \to S$ consist of a fiberwise isometry
$F\colon \Sigma' \to \Sigma$ covering $f$, a connection-preserving
bundle map $\Phi\colon P' \to P$ covering $F$, and a $2$-morphism
$\xi\colon \psi' \to \psi \circ F$.  Compositions are performed as
suggested by the geometry.

The data (1) and (2) above are completely unrelated in the sense that
\begin{equation*}
  \mathfrak B^{\mathbb T}(\mathfrak X) \cong \mathfrak B^{\mathbb T}
  \times_{\mathfrak B} \mathfrak B(\mathfrak X),
\end{equation*}
and our map $\mathcal P\colon \mathfrak B^{\mathbb T}(\mathfrak X) \to
\mathfrak B(\mathfrak X)$ is simply the projection onto the second
component.  Our interest in $\mathfrak B^{\mathbb T}(\mathfrak X)$ is
due to the fact that it admits a straightforward quotient stack
presentation.  Write $\mathbb T^{1\vert1} = \mathbb
R^{1\vert1}/\mathbb Z$ for the (length $1$) super circle group.

\begin{proposition}
  \label{prop:2}
  There is an equivalence of stacks
  \begin{equation*}
    \Pi T \mathfrak X \sslash \Isom(\mathbb T^{1\vert1})
    \to \mathfrak B^{\mathbb T}(\mathfrak X),
  \end{equation*}
  where the action of $\Isom(\mathbb T^{1\vert1})$ on $\Pi T\mathfrak
  X$ is through the quotient
  \begin{equation*}
    \pi\colon \Isom(\mathbb T^{1\vert1}) =
    \mathbb T^{1\vert1} \rtimes \mathbb Z/2 \to \mathbb R^{0\vert1}
    \rtimes \mathbb Z/2 = \Isom(\mathbb R^{0\vert1}).
\end{equation*}
\end{proposition}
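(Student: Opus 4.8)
The plan is to construct the functor explicitly and then reduce the proof that it is an equivalence to the case $\mathfrak X = \pt$. First I would build an $\Isom(\mathbb T^{1\vert1})$-equivariant functor $\Pi T\mathfrak X \to \mathfrak B^{\mathbb T}(\mathfrak X)$ out of the standard supercircle: an $S$-point $f\in\SM(S\times\mathbb R^{0\vert1},\mathfrak X)$ of $\Pi T\mathfrak X$ is sent to the object $(\Sigma, P, \omega, \psi)$ with $\Sigma = S\times\mathbb R^{0\vert1}$ in its standard Euclidean structure, $P = S\times\mathbb T^{1\vert1}$ with the standard translation-invariant connection $\omega$ (whose fiberwise curvature is $\zeta$), and $\psi = f$. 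Equivariance is supplied by the observation that an element $g\in\Isom(\mathbb T^{1\vert1})$ is, via theorem~\ref{thm:1}, a connection-preserving bundle automorphism of $P$ covering the isometry $\pi(g)$ of $\Sigma$, and $\pi(g)$ is exactly how $g$ acts on $\Pi T\mathfrak X$; the accompanying $2$-morphism relating $\psi$ and $\psi\circ\pi(g)$ provides the descent datum. By descent for quotient stacks (the universal property underlying proposition~\ref{prop:6}; see appendix~\ref{sec:group-actions-stacks}) this determines the asserted functor $\Phi\colon \Pi T\mathfrak X\sslash\Isom(\mathbb T^{1\vert1})\to\mathfrak B^{\mathbb T}(\mathfrak X)$.

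To see that $\Phi$ is an equivalence, I would exploit the fiber-product decomposition $\mathfrak B^{\mathbb T}(\mathfrak X)\cong\mathfrak B^{\mathbb T}\times_{\mathfrak B}\mathfrak B(\mathfrak X)$ recorded just above, together with the identification $\mathfrak B(\mathfrak X) = \Pi T\mathfrak X\sslash\Isom(\mathbb R^{0\vert1})$ from section~\ref{sec:dimension-0vert1}. Granting the case $\mathfrak X = \pt$, namely an equivalence $\mathfrak B^{\mathbb T}\cong\pt\sslash\Isom(\mathbb T^{1\vert1})$ under which $\mathcal P$ becomes the map induced by $\pi$, the claim follows from the elementary fact that for a homomorphism $\phi\colon G\to H$ and an $H$-stack $Y$ one has $(\pt\sslash G)\times_{\pt\sslash H}(Y\sslash H)\cong Y\sslash G$, with $G$ acting through $\phi$: an $S$-point of the left-hand side is a $G$-bundle $Q$ together with an $H$-equivariant map $Q\times_G H\to Y$, i.e.\ a $\phi$-equivariant map $Q\to Y$. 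Applying this with $G = \Isom(\mathbb T^{1\vert1})$, $H = \Isom(\mathbb R^{0\vert1})$, $\phi = \pi$ and $Y = \Pi T\mathfrak X$ yields exactly $\Pi T\mathfrak X\sslash\Isom(\mathbb T^{1\vert1})$ with the action through $\pi$, and one checks that the resulting equivalence is implemented by $\Phi$.

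It remains to treat $\mathfrak X = \pt$, which is the real content. Here $\Pi T\pt = \pt$, so I must show that the standard object $(\mathbb R^{0\vert1},\mathbb T^{1\vert1},\omega)$ defines an atlas $\pt\to\mathfrak B^{\mathbb T}$ with automorphism stack $\Isom(\mathbb T^{1\vert1})$. The automorphism computation is immediate from the second sentence of theorem~\ref{thm:1}: automorphisms of the standard object over $S$ are isometries of the supercircle $S\times\mathbb T^{1\vert1}$, that is $\Isom(\mathbb T^{1\vert1})(S)$, and under $\pi$ the vertical (gauge) automorphisms map to the identity of $\Sigma$, consistently with $\ker\pi = \mathbb T$. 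The essential surjectivity is where I expect the main work: I would show that an arbitrary object $(\Sigma, P,\omega)$ is, locally on $S$, isomorphic to the standard one. Local standardness of the Euclidean $0\vert1$-family $\Sigma$ reduces this to a $\mathbb T$-bundle $P\to S\times\mathbb R^{0\vert1}$ with a fiberwise connection of prescribed curvature $\zeta$; such a bundle is locally trivial, and two connections with equal curvature differ by a fiberwise-closed $1$-form, which is locally exact and hence gauge-trivial, pinning $(P,\omega)$ down to the standard model up to isomorphism. The one subtlety to handle carefully is the graded/super nature of the gauge group and of the forms involved when solving for the gauge transformation; this is precisely the local normal-form input underlying theorem~\ref{thm:1}, so I would either invoke its proof or re-run the same Poincaré-lemma argument fiberwise over $S$.
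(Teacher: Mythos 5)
Your proof is correct and follows essentially the same route as the paper: the fiber-product decomposition $\mathfrak B^{\mathbb T}(\mathfrak X)\cong\mathfrak B^{\mathbb T}\times_{\mathfrak B}\mathfrak B(\mathfrak X)$ combined with base change for group quotients (your ``elementary fact'' is exactly the paper's proposition~\ref{prop:4}) and the case $\mathfrak X=\mathrm{pt}$ (your atlas, automorphism, and gauge-fixing argument is precisely the content of the proof of theorem~\ref{thm:K1-versus-BT}). The only difference is that you re-derive these two ingredients inline where the paper simply cites them.
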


\begin{proof}
  For $\mathfrak X = \mathrm{pt}$, this follows from (the proof of)
  theorem~\ref{thm:K1-versus-BT}.  Therefore, in the general case we have
  \begin{equation*}
    \mathfrak B^{\mathbb T}(\mathfrak X) \cong
    \Pi T\mathfrak X\sslash\Isom(\mathbb R^{0\vert1})
    \times_{\mathrm{pt}\sslash\Isom(\mathbb R^{0\vert1})}
    \mathrm{pt}\sslash\Isom(\mathbb T^{1\vert1})
  \end{equation*}
  and the result follows from proposition~\ref{prop:4}.
\end{proof}

\begin{proposition}
  \label{prop:5}
  For any sheaf $\mathfrak F$, the map $\mathcal P\colon \mathfrak
  B^{\mathbb T}(\mathfrak X) \to \mathfrak B(\mathfrak X)$ induces a
  bijection
  \begin{equation*}
    \Fun_{\mathrm{SM}}(\mathfrak B(\mathfrak X), \mathfrak F)
    \to \Fun_{\mathrm{SM}}(\mathfrak B^{\mathbb T}(\mathfrak X),
    \mathfrak F).
  \end{equation*}
\end{proposition}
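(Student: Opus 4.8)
The plan is to pass to quotient-stack presentations and compute both sides with the equalizer formula for functions on a quotient stack (proposition~\ref{prop:6}), exactly as in the proof of theorem~\ref{thm:0|1-EFT-orbifold}. By the definition of $\mathfrak B(\mathfrak X)$ and by proposition~\ref{prop:2} I have
\[
  \mathfrak B(\mathfrak X) = \Pi T\mathfrak X\sslash\Isom(\mathbb R^{0\vert1}),
  \qquad
  \mathfrak B^{\mathbb T}(\mathfrak X)\cong \Pi T\mathfrak X\sslash\Isom(\mathbb T^{1\vert1}),
\]
where $\Isom(\mathbb T^{1\vert1})$ acts on $\Pi T\mathfrak X$ through the surjection $\pi\colon\Isom(\mathbb T^{1\vert1})\to\Isom(\mathbb R^{0\vert1})$ of proposition~\ref{prop:2}. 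The first step is to check that, under these presentations, $\mathcal P$ is the map of quotient stacks induced by the pair $(\id_{\Pi T\mathfrak X},\pi)$; this is immediate from the description of $\mathcal P$ as forgetting the bundle $(P,\omega)$, together with the identification established in the proof of proposition~\ref{prop:2}.

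Writing $Y=\Pi T\mathfrak X$, $G=\Isom(\mathbb T^{1\vert1})$ and $H=\Isom(\mathbb R^{0\vert1})$, proposition~\ref{prop:6} realizes both function sets as equalizers inside $\Fun_\SM(Y,\mathfrak F)$:
\[
  \Fun_\SM(\mathfrak B(\mathfrak X),\mathfrak F)
  = \lim\bigl(\Fun_\SM(Y,\mathfrak F)\rightrightarrows\Fun_\SM(Y\times H,\mathfrak F)\bigr),
\]
the two parallel arrows being pullback along the projection $Y\times H\to Y$ and along the action map; and likewise for $G$ with $Y\times H$ replaced by $Y\times G$. Because $\mathcal P$ is induced by $(\id_Y,\pi)$, the map $\mathcal P^*$ is the map of equalizers given by the identity of $\Fun_\SM(Y,\mathfrak F)$ together with $(\id\times\pi)^*$ in degree one. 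Thus both equalizers are subsets of $\Fun_\SM(Y,\mathfrak F)$ and $\mathcal P^*$ is induced by the identity relating them; proving it is a bijection amounts to showing that the $H$-invariance condition and the $G$-invariance condition cut out the same subset.

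Here the key identities are $\mathrm{pr}_{Y\times H}\circ(\id\times\pi)=\mathrm{pr}_{Y\times G}$ and $\mathrm{act}_H\circ(\id\times\pi)=\mathrm{act}_G$, the second of which is precisely the statement that $G$ acts through $\pi$. Applying $(\id\times\pi)^*$ shows at once that $H$-invariance implies $G$-invariance. For the reverse implication I use that
\[
  (\id\times\pi)^*\colon \Fun_\SM(Y\times H,\mathfrak F)\to\Fun_\SM(Y\times G,\mathfrak F)
\]
is injective: the projection $\pi$ admits a section $s$ as a morphism of supermanifolds, since the sequence $1\to\mathbb R\to\mathbb R^{1\vert1}\to\mathbb R^{0\vert1}\to1$ of remark~\ref{rem:1} is non-split only as a sequence of groups, and a supermanifold splitting descends to $\mathbb T^{1\vert1}$ and extends over the $\mathbb Z/2$-factor. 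By functoriality of $\Fun_\SM(-,\mathfrak F)$ the map $(\id\times s)^*$ is then a left inverse of $(\id\times\pi)^*$. Hence $G$-invariance implies $H$-invariance, the two subsets coincide, and $\mathcal P^*$ is a bijection.

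I expect the only real obstacle to be bookkeeping rather than depth: matching $\mathcal P$ with the $\pi$-induced map of quotient stacks, and confirming that proposition~\ref{prop:6} applies verbatim to an arbitrary sheaf $\mathfrak F$ (it is invoked in theorem~\ref{thm:0|1-EFT-orbifold} only for $\mathfrak F=\mathbb C$, but the argument there uses nothing beyond the sheaf property). The one geometric input, the existence of a supermanifold section of $\pi$, is what makes the injectivity of $(\id\times\pi)^*$ completely elementary; alternatively one could observe that $\pi$ is a surjective submersion and appeal to the separatedness of $\Fun_\SM(-,\mathfrak F)$ with respect to covers.
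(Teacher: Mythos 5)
Your proposal is correct and follows essentially the same route as the paper: identify both moduli stacks as quotients of $\Pi T\mathfrak X$ via proposition~\ref{prop:2}, recognize $\mathcal P$ as the map induced by the surjection $\pi$, and apply proposition~\ref{prop:6} to identify both function sets with the same subset of invariants in $\Fun_\SM(\Pi T\mathfrak X,\mathfrak F)$. The paper treats the coincidence of the two invariance conditions as immediate from the surjectivity of $\pi$, whereas you spell it out via a section of $\pi$ (or separatedness along the cover); this is a legitimate filling-in of detail, not a different argument.
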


\begin{proof}
  Under the identification of the previous proposition, $\mathcal
  P\colon \mathfrak B^{\mathbb T}(\mathfrak X) \to \mathfrak
  B(\mathfrak X)$ becomes the natural map
  \begin{equation*}
    \Pi T\mathfrak X\sslash \Isom(\mathbb T^{1\vert1})
    \to \Pi T\mathfrak X\sslash\Isom(\mathbb R^{0\vert1})
  \end{equation*}
  induced by the surjection $\pi\colon \Isom(\mathbb T^{1\vert1}) \to
  \Isom(\mathbb R^{0\vert1})$.  Thus, proposition~\ref{prop:6}
  identifies the set $\Fun_{\mathrm{SM}}(\mathfrak B(\mathfrak X),
  \mathfrak F)$ with the subset of $\Isom(\mathbb
  R^{0\vert1})$-invariants in $\Fun_{\mathrm{SM}}(\Pi T\mathfrak X,
  \mathfrak F)$, and similarly for $\Fun_{\mathrm{SM}}(\mathfrak
  B^{\mathbb T}(\mathfrak X), \mathfrak F)$.  This proves the claim.
\end{proof}

Taking $\mathfrak F = \mathbb C$, we get a bijection between
$0\vert1\EFT(\mathfrak X)$ and $C^\infty(\mathfrak B^{\mathbb
  T}(\mathfrak X))$.  This shows that the last step of our dimensional
reduction procedure, pushing forward along $\mathcal P$, is
well-defined.

\subsection{The map $\mathcal Q\colon
  \mathfrak B^{\mathbb T}(\Lambda\mathfrak X)
  \to \mathfrak B^{\mathbb R\sslash\mathbb Z}(\Lambda\mathfrak X)$}
\label{sec:map-equiv-bordisms}

We denote by $\alpha$ the canonical automorphism of the identity of
$\Lambda\mathfrak X$.  It suffices to describe the restriction of the
desired map $\mathcal Q\colon \mathfrak B^{\mathbb T}(\Lambda\mathfrak
X) \to \mathfrak B^{\mathbb R\sslash\mathbb Z}(\Lambda\mathfrak X)$ to
the full prestack of objects where all bundles involved are trivial.
To $(\Sigma, P, \psi) \in \mathfrak B^{\mathbb T}(\Lambda\mathfrak
X)_S$ with
\begin{equation*}
  \Sigma = S \times \mathbb R^{0\vert1},
  \quad P = S \times \mathbb T^{1\vert1},
  \quad \psi\colon \Sigma \to \Lambda\mathfrak X,
\end{equation*}
and the standard Euclidean structure and connection form, we want to
assign an object $(\Sigma, P, \psi_!\colon P \to \Lambda\mathfrak X,
\rho) \in \mathfrak B^{\mathbb R\sslash\mathbb Z}(\mathfrak X)_S$.
Consider the covering $ U = S \times \mathbb R^{1\vert1} \to P$.  Our
goal is to descend $\tilde \psi\colon U \to \Lambda\mathfrak X$, the
pullback of $\psi$ via $U \to \Sigma$, to a map $\psi_!\colon P \to
\Lambda\mathfrak X$.
\begin{equation*}
  \xymatrix{U \ar[r]\ar@/^{4.2ex}/[rrr]^(0.15){\tilde\psi} &
    P \ar[r]\ar@{-->}@/_{3.4ex}/[rr]_(.2){\psi_!} & \Sigma \ar[r]^(0.3)\psi & \Lambda\mathfrak X}
\end{equation*}
In order to do that, we need to provide certain isomorphisms over
double overlaps and then check a coherence condition on triple
overlaps.  Denote by $\pr_1, \pr_2\colon U \times_P U
\rightrightarrows U$ the projections.  Then we are looking for a
$2$-morphism $\tilde\alpha\colon \tilde \psi \circ \pr_1 \to \tilde
\psi \circ \pr_2$ (whose domain and codomain happen to be the same
map, henceforth denoted $\psi \circ \pr$).  Note that $U \times_P U$
breaks up as a disjoint union indexed by $\mathbb Z$, where the $n$th
component comprises pairs of the form $(x,n\cdot x)$.  On that
component, we set $\tilde\alpha$ to be the horizontal composition
(whiskering)
\begin{equation*}
  \tilde\alpha = \alpha^n \circ (\psi\circ \pr).
\end{equation*}
Regarding the coherence condition, we need to check that
\begin{equation}
  \label{eq:1}
  \pr_{13}^* \tilde\alpha = \pr_{23}^* \tilde\alpha \circ
    \pr_{12}^* \tilde\alpha,
\end{equation}
where $\pr_{ij}$ denotes the projection $U \times_P U \times_P U \to U
\times_p U$ forgetting the third index.  The threefold fiber product
breaks up as a disjoint union indexed by $\mathbb Z \times \mathbb Z$,
where the component $(n,m)$ and its image through the $\pr_{ij}$ are
as follows.
\begin{equation*}
  \xymatrix{&(x, n\cdot x, (n+m)\cdot x) \ar@{|->}[dl]_{\pr_{12}}
    \ar@{|->}[d]^{\pr_{13}}\ar@{|->}[dr]^{\pr_{23}}&\\
  (x,n\cdot x) & (x,(n+m)\cdot x) & (n\cdot x, (n+m)\cdot x)}
\end{equation*}
Therefore, on that component,
\begin{gather*}
    \pr_{23}^* \tilde\alpha = \alpha^m \circ  (\psi\circ\pr), \qquad
    \pr_{12}^* \tilde\alpha = \alpha^n \circ  (\psi\circ\pr),
    \\
    \pr_{13}^* \tilde\alpha = \alpha^{n+m} \circ (\psi\circ\pr),
\end{gather*}
and their vertical compositions are as required by~\eqref{eq:1}.  We
thus obtain the desired $\psi_!\colon P \to \Lambda\mathfrak X$.

Next, we need to provide the $\mathbb R\sslash\mathbb Z$-equivariance
datum $\rho$ for $\psi_!$.  To analyze the putative square
\begin{equation}
  \label{eq:21}
  \begin{gathered}
    \xymatrix{ P \times \mathbb R\sslash\mathbb Z \ar[d]_{\psi_!
        \times \id}
      \ar[r]^-\mu & P \ar[d]^{\psi_!}\\
      \Lambda\mathfrak X \times \mathbb R\sslash\mathbb Z \ar[r]
      \ar@{=>}[ur]_-{\rho}& \Lambda\mathfrak X}
  \end{gathered}
\end{equation}
we notice that, after a suitable base change, any $S$-point of $P
\times \mathbb R\sslash\mathbb Z$ can be pulled back from the atlas
$i_0\colon P \times \mathbb R \to P \times \mathbb R\sslash \mathbb
Z$, or, for that matter, from any of the atlases $i_n\colon
(p,t)\mapsto i_0(p, t+n)$, where $n \in \mathbb Z$; moreover, any
morphism of $S$-points can be pulled back from $m\colon i_n\to
i_{n+m}$.  Thus, we can extract all information encoded by $\rho$ by
evaluating the above diagram on each $i_n$ and $m$.  The top-right
composition factors through $P \times \mathbb T$, so every $i_n$ maps
to the same $\mu^* \psi_! \in \Lambda\mathfrak X_{P \times \mathbb
  R}$, and $m$ maps to the identity.  The left-bottom composition
factors through $\Lambda\mathfrak X \times \mathrm{pt}\sslash\mathbb
Z$, so, for any $n$, $i_n$ maps to $\pr_1^*\psi_! \in \Lambda\mathfrak
X_{P \times \mathbb R}$, and $m\colon i_n \to i_{n+m}$ maps to
$\pr_1^*\alpha^m\colon \pr_1^*\psi_!  \to \pr_1^*\psi_!$.  For each
$i_n$, the fibered natural transformation $\rho$ should give a
morphism $\rho(i_n)\colon \pr_1^*\psi_! \to \mu^*\psi_!$ fitting in
the diagram below.
\begin{equation*}
  \xymatrix@C=10ex{
    \pr_1^*\psi_! \ar[d]_{\pr_1^*\alpha^m}  \ar[r]^{\rho(i_n)}
    &
    \mu^*\psi_! \ar@{=}[d]
    \\
   \pr_1^*\psi_!\ar[r]^{\rho(i_{n+m})}
    &    \mu^*\psi_!
    \\ }
\end{equation*}
This means $\rho$ is completely specified by $\rho(i_0)$, and
naturality imposes no further restrictions on the latter.  To provide
$\rho(i_0)$, it suffices to give a morphism $\pr_1^*\tilde\psi \to
\mu^*\tilde\psi$, where the latter is the composition
\begin{equation*}
  U \times \mathbb R \to P \times \mathbb R \xrightarrow{\mu} P \to
  \Sigma \xrightarrow{\psi} \Lambda\mathfrak X,
\end{equation*}
satisfying appropriate coherence conditions on $U \times_P U \times
\mathbb R$.  Since $\mu^*\tilde\psi = \pr_1\tilde\psi$, we can take
that to be the identity.  One can check that $\rho$ satisfies the
coherence conditions required of the equivariance datum.

The effect of $\mathfrak B^{\mathbb T}(\Lambda\mathfrak X) \to
\mathfrak B^{\mathbb R\sslash\mathbb Z}(\Lambda\mathfrak X)$ on
morphisms is also given by descent.  Given a morphism in $\mathfrak
B^{\mathbb T}(\Lambda\mathfrak X)$
\begin{equation*}
  \xymatrix@R=0.4ex{P \ar[dd]\ar[r]^\Phi & P' \ar[dd] &\\
    && \Lambda\mathfrak X\\
  \Sigma \ar[r]^F \ar@/^{3ex}/[rru]|\hole^(.7)\psi_(.7){}="a" & \Sigma'
  \ar@/_{1.2ex}/[ru]_(.7){\psi'}
 \ar@{<=}"3,2";"a"_-\xi}
\end{equation*}
where $\Sigma' = S' \times \mathbb R^{0\vert1}$, $P' = S' \times
\mathbb T^{1\vert1}$ are also trivial families, consider the fiberwise
universal cover $U' = S\times \mathbb R^{1\vert1} \to P'$ and choose a
lift $\tilde\Phi\colon U \to U'$.  We can then lift $\psi$, $\psi'$
and $\xi$ by composing respectively whiskering with $U\to \Sigma$ or
$U' \to \Sigma'$
\begin{equation*}
  \xymatrix@C=2ex{U \ar[rr]^{\tilde\Phi} \ar[rd]_-{\tilde \psi}^{}="a"&& U'
    \ar[ld]^-{\tilde \psi'}_{}="b"\\ & \Lambda\mathfrak X \ar@{=>}"a";"b"^{\tilde\xi}}
\end{equation*}
and descend $\tilde\xi$ to a morphism $\xi_!\colon \psi_! \to
\Phi^*\psi_!'$.  To justify that, we need to show that on the $n$th
component of $U \times_P U$ the diagram
\begin{equation*}
  \xymatrix{\pr_1^*\tilde \psi\ar[r]^-{\pr_1^*\tilde\xi} \ar[d]_{\alpha^n} &
    \pr_1^*(\tilde \psi' \circ \tilde\Phi)\ar[d]^{\alpha^{n}} \\
    \pr_2^*\tilde \psi \ar[r]^-{\pr_2^*\tilde\xi} & \pr_2^*(\tilde \psi'\circ \tilde\Phi)}
\end{equation*}
commutes.  (To be precise, $\alpha^n$ above stands, respectively, for
$\alpha^n\circ (\psi \circ \pr)$, the gluing isomorphism used to build
$\psi_!$, and its counterpart for $\Phi^*\psi_!'$.)  This follows
immediately from the compatibility condition between $\xi$ and
$\alpha$, namely $\xi \circ \alpha_\psi = \alpha_{\Phi^*\psi'}\circ
\xi$.  The morphism $\xi_!$ thus obtained is independent of the choice
of lift $\tilde\Phi$, since it only depends on the composition $\tilde
\psi'\circ\tilde\Phi$.  We omit the verification that $\xi_!$ is
compatible with the equivariance data.

Finally, we assign to the morphism in $\mathfrak B^{\mathbb
  T}(\Lambda\mathfrak X)$ prescribed by the data $(F, \Phi, \xi)$ the
morphism in $\mathfrak B^{\mathbb R\sslash\mathbb Z}(\Lambda\mathfrak
X)$ prescribed by the data $(F, \Phi, \xi_!)$.  That this assignment
respects compositions follows from uniqueness for descent of
morphisms.

This finishes the construction of $\mathcal Q$.  The next result is
not used directly in the remainder of the paper, and is rather meant
as a motivation for introducing the stacks $\mathfrak B^{\mathbb
  T}(\Lambda \mathfrak X)$ and $\mathfrak B^{\mathbb R\sslash\mathbb
  Z}(\Lambda\mathfrak X)$, which turn out to be just different
presentations of the same object.  In fact, they are presentations
adapted to establishing a relationship with $\mathfrak
B(\Lambda\mathfrak X)$ respectively $\mathfrak K(\mathfrak X)$, as
witnessed by the relatively easy definition of the maps $\mathcal P$
and $\mathcal R$ above.  Note also that the proof of
theorem~\ref{thm:Q-is-an-equiv} is indirect, and does not explicitly
provide an inverse to $\mathcal Q$; thus, it does not seem possible to
simplify our presentation of the dimensional reduction procedure by
removing any mention to $\mathfrak B^{\mathbb T}(\Lambda\mathfrak X)$.

\begin{theorem}
  \label{thm:Q-is-an-equiv}
  The fibered functor $\mathcal Q\colon \mathfrak B^{\mathbb
    T}(\Lambda\mathfrak X) \to \mathfrak B^{\mathbb R\sslash\mathbb
    Z}(\Lambda\mathfrak X)$ is an equivalence.
\end{theorem}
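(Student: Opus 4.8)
The plan is to verify that $\mathcal Q$ is fully faithful and essentially surjective, the standard criterion for a fibered functor to be an equivalence of stacks, checking both conditions locally in $S$. Since every object of either stack is, after base change, of the form used to set up $\mathcal Q$ in section~\ref{sec:map-equiv-bordisms} --- with $\Sigma = S\times\mathbb R^{0\vert1}$ and $P = S\times\mathbb T^{1\vert1}$ trivial and carrying the standard Euclidean and connection data --- and since $\mathcal Q$ leaves this shared data $(\Sigma, P, \omega)$ untouched and only modifies the map to $\Lambda\mathfrak X$, it suffices to work on the full sub-prestack of such objects and compare the groupoids of map-data. As emphasized in the paragraph preceding the statement, the argument will be indirect: rather than exhibiting a geometric inverse descending $\psi_!$ back to $\Sigma$, I will match two quotient-stack presentations.

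First I would dispose of full faithfulness by hand. A morphism in $\mathfrak B^{\mathbb T}(\Lambda\mathfrak X)$ over a fixed pair $(F,\Phi)$ is a $2$-morphism $\xi\colon \psi' \to \psi\circ F$, and $\mathcal Q$ sends it to the descent $\xi_!$ of the lifted $2$-morphism $\tilde\xi$ along the cover $U \to P$. The assignment $\xi\mapsto\xi_!$ is a bijection: a $2$-morphism $\psi_! \to \Phi^*\psi_!'$ pulls back along $U \to P$ to a $2$-morphism between maps which, by the construction of $\psi_!$ and $\psi_!'$, factor through $U\to\Sigma$ respectively $U\to\Sigma'$, and its compatibility with the gluing isomorphisms $\alpha^n$ is forced by $\xi_!$ being well defined on $P$; hence it descends uniquely to a $\xi$. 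This is exactly the existence-and-uniqueness of descent of morphisms along $U\to P$ already used to see that $\mathcal Q$ respects composition.

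The essential surjectivity is the crux. The obstruction to simply descending a given $\psi_!\colon P\to\Lambda\mathfrak X$ along $P\to\Sigma$ is that the $\mathbb R\sslash\mathbb Z$-action on $\Lambda\mathfrak X$ --- through $\mathbb R\sslash\mathbb Z\to\mathrm{pt}\sslash\mathbb Z$ and the canonical automorphism $\alpha$ --- is nontrivial, so $\psi_!$ is not invariant and does not pass to the quotient on the nose. Instead I would recognize an $\mathbb R\sslash\mathbb Z$-equivariant map as a map of quotient stacks $P\sslash(\mathbb R\sslash\mathbb Z)\to\Lambda\mathfrak X\sslash(\mathbb R\sslash\mathbb Z)$ over $B(\mathbb R\sslash\mathbb Z)$, and exploit that the action on $P$ is free with quotient $\Sigma$ (so $P\sslash(\mathbb R\sslash\mathbb Z)\simeq\Sigma$) while the $\mathbb T$-bundle $\mathbb T^{1\vert1}\to\mathbb R^{0\vert1}$ is trivial (so the classifying map $\Sigma\to B(\mathbb R\sslash\mathbb Z)$ is locally null). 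Feeding this into the identity $\mathcal Y\sslash\Gamma\simeq(\mathcal Y\sslash H)\times_{BH}B\Gamma$ for an action of $\Gamma=\mathbb R\sslash\mathbb Z$ factoring through $H=\mathrm{pt}\sslash\mathbb Z$ --- an instance of the descent statements of appendix~\ref{sec:group-actions-stacks}, via proposition~\ref{prop:4} and proposition~\ref{prop:6} --- the groupoid of equivariant maps collapses to $\underline\Fun_{\mathrm{SM}}(\mathbb R^{0\vert1},\Lambda\mathfrak X)=\Pi T(\Lambda\mathfrak X)$ modulo the residual $\Isom(\mathbb T^{1\vert1})$-symmetry.

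This exhibits $\mathfrak B^{\mathbb R\sslash\mathbb Z}(\Lambda\mathfrak X)$ as $\Pi T(\Lambda\mathfrak X)\sslash\Isom(\mathbb T^{1\vert1})$, precisely the presentation that proposition~\ref{prop:2} gives for $\mathfrak B^{\mathbb T}(\Lambda\mathfrak X)$; tracing the construction of section~\ref{sec:map-equiv-bordisms} through these identifications shows $\mathcal Q$ becomes the canonical equivalence of the two presentations, whence it is an equivalence (this also reproves the full faithfulness checked above). The main obstacle I anticipate is the bookkeeping in the previous paragraph: one must check that the canonical automorphism $\alpha$ recorded in the equivariance datum $\rho$ matches exactly the $\mathbb Z$-monodromy implicit in the trivialization of the $\mathbb T$-bundle $U\to\Sigma$, so that no information is lost or spuriously introduced in passing between the two quotient-stack presentations. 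This is where the nontriviality of the extension $\mathbb R\to\mathbb R^{1\vert1}\to\mathbb R^{0\vert1}$ (remark~\ref{rem:1}) and the explicit cocycle computation of section~\ref{sec:map-equiv-bordisms} must be reconciled.
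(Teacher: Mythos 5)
Your conclusion is correct and your overall strategy---identifying both sides with the presentation $\Pi T(\Lambda\mathfrak X)\sslash\Isom(\mathbb T^{1\vert1})$---is genuinely different from the paper's, which instead introduces an auxiliary prestack $\mathfrak B^{\mathrm{triv}}$ of objects equipped with a section $s\colon\Sigma\to P$ and factors $\mathcal Q$ through it. However, there are two genuine gaps. First, in the full-faithfulness paragraph: compatibility of the pulled-back $2$-morphism with the gluing isomorphisms $\alpha^n$ over $U\times_P U$ only governs descent from $U$ to $P$ (which just returns the morphism you started with); it does not imply descent to $\Sigma$. For that you need $\pr_1^*$ and $\pr_2^*$ of your $2$-morphism to agree over all of $U\times_\Sigma U$, i.e.\ invariance under the full $\mathbb R$-translation action on the fibers of $U\to\Sigma$, not merely under the integer translations. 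This invariance is exactly what the required compatibility with the equivariance data $\rho$, $\rho'$ supplies (recall $\rho$ was built from the identity on $\pr_1^*\tilde\psi\to\mu^*\tilde\psi$), and you never invoke it. Without it the claim is false: already for $\mathfrak X=\mathrm{pt}\sslash\mathbb T$ a $2$-morphism between trivialized pullbacks is a function $U\to\mathbb T$, which can be $\mathbb Z$-periodic without being constant along the $\mathbb R$-fibers, hence compatible with all $\alpha^n$ yet not descending to $\Sigma$. So ``hence it descends uniquely to a $\xi$'' does not follow from what you wrote.

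Second, the essential-surjectivity argument invokes the identities ``equivariant maps are maps over $B\Gamma$'' and $\mathcal Y\sslash\Gamma\simeq(\mathcal Y\sslash H)\times_{BH}B\Gamma$ for $\Gamma=\mathbb R\sslash\mathbb Z$ and $H=\mathrm{pt}\sslash\mathbb Z$, which are group \emph{stacks}; propositions~\ref{prop:4} and~\ref{prop:6} are stated and proved only for sheaves of groups, and $B(\mathrm{pt}\sslash\mathbb Z)$ lives one categorical level up, so this is not an instance of the appendix but a nontrivial extension of it. Even granting that machinery, the step where the groupoid of equivariant maps ``collapses'' to $\Pi T(\Lambda\mathfrak X)$ modulo $\Isom(\mathbb T^{1\vert1})$ is precisely where the content sits: one must use a trivialization (section) of $P$ together with $\rho$ to reconstruct $\psi_!$ from its restriction to that section, and verify independence of the choice up to the twist by $\rho$---this is what the paper's $\mathfrak B^{\mathrm{triv}}$, the morphism datum $r$, and the correction $\rho_{s\circ F,r}^{-1}$ accomplish. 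Finally, matching two abstract presentations shows the stacks are equivalent, not that $\mathcal Q$ itself is an equivalence; the comparison of your identification with the explicit descent construction of $\mathcal Q$ (the paper's verification that \eqref{eq:20} $2$-commutes via the isomorphism $(s^*\psi)_!\to\psi$ induced by $\rho_0$) is a substantive check that you defer to ``bookkeeping'' but that cannot be omitted.
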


\begin{proof}
  At the morphism level, the effect of the functor in question was
  described in two steps: $\xi \mapsto \tilde\xi \mapsto \xi_{!}$.
  This is a one-to-one procedure because the first step is injective
  (since $U\to\Sigma$ has local sections) and the second step
  (descent) is in fact bijective.  Thus, it remains to show that the
  fibered functor $\mathfrak B^{\mathbb T}(\Lambda\mathfrak X) \to
  \mathfrak B^{\mathbb R\sslash\mathbb Z}(\Lambda\mathfrak X)$ is full
  and essentially surjective.  In order to do that, we will build a
  prestack $\mathfrak B^{\mathrm{triv}}$ and a factorization
  \begin{equation}
    \label{eq:20}
    \begin{gathered}
      \xymatrix@C=1ex{
        &\mathfrak B^{\mathrm{triv}} \ar[dl]_v\ar[dr]^u
        \\
        \mathfrak B^{\mathbb T}(\Lambda\mathfrak X) \ar[rr]^-{\mathcal Q}
        && \mathfrak B^{\mathbb R\sslash\mathbb Z}(\Lambda\mathfrak X).}
    \end{gathered}
  \end{equation}
  where $u$ is full and essentially surjective on the groupoid of
  $S$-point for any contractible $S$.

  The prestack $\mathfrak B^{\mathrm{triv}}$ is defined as follows:
  \begin{enumerate}
  \item an object consists of an object $(\Sigma, P, \psi,\rho) \in
    \mathfrak B^{\mathbb R\sslash\mathbb Z}(\Lambda\mathfrak X)$
    together with a section $s\colon \Sigma \to P$, and
  \item a morphism $(\Sigma', P', \psi', \rho', s') \to (\Sigma, P,
    \psi, \rho, s)$ is a pair consisting of a morphism $(F, \Phi,
    \xi)$ of the underlying objects in $\mathfrak B^{\mathbb
      R\sslash\mathbb Z}(\mathfrak X)$ together with a map $r\colon
    \Sigma' \to \mathbb R$ relating $s$ and $s'$ in the sense that
    $\Phi \circ s' = (s \circ F)e^{2\pi ir}$.
  \end{enumerate}
  With a little poetic license, a morphism can be depicted as follows
  (the square containing $r$ would literally make sense, as a
  $2$-commutative diagram, if we replaced $P$ with $P\sslash\mathbb
  R$).
  \begin{equation}
    \label{eq:5}
    \begin{gathered}
      \xymatrix@R-1.5em{ P'
        \ar[dd]\ar[rd]^\Phi
        \ar@/^3ex/[rrd]^{\psi'}_{}="a"&&\\
        & P \ar[dd]\ar[r]^{\psi}  \ar_-\xi@{<=}"a"& \Lambda\mathfrak X\\
        \Sigma' \ar[rd]^F \ar@{-->}@<1.12ex>[uu]^{s'} \ar@{=>}[ur]^{r}&&\\
        & \Sigma \ar@{-->}@<-1.12ex>[uu]_s&}
    \end{gathered}
  \end{equation}

  We define $u\colon \mathfrak B^{\mathrm{triv}} \to \mathfrak
  B^{\mathbb R\sslash\mathbb Z}(\Lambda\mathfrak X)$ to be the
  forgetful functor, which simply discards $s$ and $r$, so it is
  clearly full and essentially surjective over contractible $S$ as
  claimed.

  Next, we construct $v\colon \mathfrak B^{\mathrm{triv}} \to
  \mathfrak B^{\mathbb T}(\Lambda\mathfrak X)$.  To an object
  $(\Sigma, P, \psi, \rho, s) \in \mathfrak B^{\mathrm{triv}}$, we
  assign the object $(\Sigma, P, s^*\psi)$ in $\mathfrak B^{\mathbb
    T}(\Lambda\mathfrak X)$.  Now fix a morphism as in \eqref{eq:5}.
  To define its image in $\mathfrak B^{\mathbb T}(\Lambda\mathfrak
  X)$, the only new data we need to provide is a morphism $(s')^*\psi'
  \to (s\circ F)^*\psi$, which we take to be the following
  composition:
  \begin{equation*}
    (s')^*\psi' \xrightarrow{(s')^*\xi} (s')^*\Phi^*\psi \cong
                  (\Phi \circ s')^*\psi = ((s\circ F)e^{2\pi ir})^*\psi
                \xrightarrow{\rho_{s\circ F, r}^{-1}}
                  (s\circ F)^*\psi.
  \end{equation*}
  We omit the verification of functoriality.

  To finish the proof, we just need to show that \eqref{eq:20}
  commutes (up to $2$-isomorphism).  It suffices to look at $(\Sigma,
  P, \psi, \rho, s) \in \mathfrak B^{\mathrm{triv}}$ where $P$ and
  $\Sigma$ are trivial families, and pick $s$ to be the unit section;
  our goal is to produce an isomorphism between $(s^*\psi)_!$ and
  $\psi$, natural in the input data $(\Sigma, P, \psi, \rho, s)$ and
  compatible with the respective equivariance data.  From the
  discussion leading to the construction of the $\rho$ in
  \eqref{eq:21}, we see that the data of the present (arbitrarily
  given) $\rho$ is essentially an isomorphism $\rho_0\colon
  \pr_1^*\psi \to \mu^*\psi$ in $\Lambda\mathfrak X_{P \times \mathbb
    R}$.  Now, let $\pi^*\psi$ be the pullback through $\pi\colon U
  \to P$ and recall that $\widetilde{s^*\psi}$ is the $U$-point of
  $\Lambda \mathfrak X$ used to put together $(s^*\psi)_{!}$.  Note
  that each half of the diagram
  \begin{equation*}
    \xymatrix{
      U = \Sigma\times\mathbb R
        \ar[r]^-{s \times \mathrm{id}}
        \ar@/^{6ex}/[rrr]_{\widetilde{s^*\psi}}\ar@/_{6ex}/[rrr]^{\pi^*\psi} &
      P \times\mathbb R
        \ar@<0.66ex>[r]^-{\pr_1}\ar@<-.66ex>[r]_-\mu &
      P
        \ar[r]^(0.3)\psi &
      \Lambda\mathfrak X}
  \end{equation*}
  commutes, so $\rho_0$ gives a morphism $\widetilde{s^*\psi} \to
  \pi^*\psi$ and, by descent, a morphism $(s^*\psi)_{!} \to \psi$.  We
  omit the naturality and compatibility checks.
\end{proof}

\subsection{Global quotients}
\label{sec:global-quotients}

Let us illustrate the above constructions when $\mathfrak X = X\sslash
G$ is the quotient orbifold associated to the action of a finite group
$G$ on a manifold $X$.

We start noticing that a quotient stack presentation for $\Lambda
(X\sslash G)$ can be given as follows.  Consider the product $X \times
G$ with diagonal $G$-action, where $G$ acts on itself by conjugation.
There is an invariant submanifold
\begin{equation}
  \label{eq:10}
  \hat X = \{ (x,g) \in X \times G \mid x \in X^g \},
\end{equation}
and an object over $S$ in the quotient stack $\hat X\sslash G$
consists of a pair $(Q, (f, A))$, where $Q \to S$ is a principal
$G$-bundle and $(f, A)\colon Q \to \hat X \subset X \times G$ is a
$G$-equivariant smooth map.  Denote by $\alpha\colon Q \to Q$ the
bundle automorphism determined by $A$; on $T$-points, it is given by
\begin{equation*}
  \alpha(q) = q A(q), \quad q \in Q_T.
\end{equation*}
Notice that this automorphism preserves $f$, and therefore $(Q, f,
\alpha)$ determines an $S$-point of $\Lambda(X\sslash G)$.
Conversely, given an $S$-point $(Q, f, \alpha)$ of $\Lambda(X\sslash
G)$, we can specify a $G$-equivariant map $A\colon Q \to G$ by
requiring that the above equation holds, and compatibility between $f$
and $\alpha$ implies that the resulting map $(f, A)\colon Q \to X
\times G$ factors through $\hat X$, thus determining an object of
$\hat X\sslash G$ over $S$.

The translation back and forth between $A$ and $\alpha$ provides a
$\pt\sslash\mathbb Z$-equivariant equivalence between
$\Lambda(X\sslash G)$ and $\hat X\sslash G$, compatible with the maps
$\Lambda(X\sslash G) \to X\sslash G$ forgetting the prescribed
automorphism and $\hat X\sslash G \to X\sslash G$ induced the
projection $\pr_1\colon X \times G \to X$.  We will shift freely
between these two formulations.

The geometric content of an $S$-family in $\mathfrak B^{\mathbb
  R\sslash\mathbb Z}(\hat X \sslash G)$ is the following:
\begin{enumerate}
\item a family $\Sigma \to S$ of connected Euclidean
  $0\vert1$-manifolds,
\item a principal $\mathbb T$-bundle $P \to \Sigma$ with a fiberwise
  connection $\omega$ whose curvature agrees with the tautological
  $2$-form on $\Sigma$,
\item a principal $G$-bundle $Q \to P$,
\item\label{item:1} a $G$-equivariant map $(f,A)\colon Q \to \hat X
  \subset X \times G$; or, equivalently, a bundle automorphism
  $\alpha\colon Q \to Q$ and a $G$-equivariant map $f\colon Q \to X$
  such that $f\circ\alpha = f$, and, finally,
\item\label{item:2} a collection of natural isomorphisms of
  $G$-torsors
  \begin{equation*}
    \rho_{p,t}\colon Q_p \to Q_{pe^{2\pi it}}
  \end{equation*}
  for each pair of $T$-points $p\colon T \to P$, $t\colon T \to\mathbb R$,
  intertwining the maps
  \begin{equation*}
    f_p \colon Q_p \to X, \quad
    f_{pe^{2\pi it}} \colon Q_{pe^{2\pi it}} \to X
  \end{equation*}
  and subject to the condition that for any $n\colon T \to \mathbb Z$ the
  diagram
  \begin{equation}
    \label{eq:4}
    \begin{gathered}
      \xymatrix{Q_p \ar[d]_{\alpha_p^n}\ar[r]^-{\rho_{p,t}}
        & Q_{pe^{2\pi it}} \ar@{=}[d]\\
        Q_p \ar[r]^-{\rho_{p, t+n}} & Q_{pe^{2\pi i(t+n)}}}
    \end{gathered}
  \end{equation}
  commutes.
\end{enumerate}
The last condition means that $\alpha$ agrees with the holonomy of $Q$
around the fibers of $P$.  A morphism in $\mathfrak B^{\mathbb
  R\sslash\mathbb Z}(\hat X \sslash G)$ is given by a fiberwise
isometry $F\colon\Sigma' \to \Sigma$, a connection-preserving bundle
map $\Phi\colon P' \to P$ covering $F$, and a bundle map $Q' \to Q$
covering $\Phi$ which is required to be compatible in the obvious way
with the data in \eqref{item:1} and \eqref{item:2} above.

The geometric content of an $S$-family in $\mathfrak B^{\mathbb
  T}(\hat X \sslash G)$ is the following:
\begin{enumerate}
\item a family $\Sigma \to S$ of connected Euclidean $0|1$-manifolds,
\item a principal $\mathbb T$-bundle $P \to \Sigma$ with a connection
  $\omega$ whose curvature agrees with the tautological $2$-form on
  $\Sigma$,
\item a principal $G$-bundle $Q \to \Sigma$, and
\item a $G$-equivariant map $f\colon Q \to \hat X$.
\end{enumerate}
A morphism $(\Sigma', P', Q', f') \to (\Sigma, P, Q, f)$ consists of a
fiberwise isometry $F\colon\Sigma' \to \Sigma$, a
connection-preserving bundle map $\Phi\colon P' \to P$ covering $F$,
and a bundle map $Q' \to Q$ covering $F$ and intertwining the maps
$f\colon Q \to \hat X$ and $f'\colon Q' \to \hat X$.
From proposition~\ref{prop:2}, it follows that $\mathfrak B^{\mathbb
  T}(\hat X \sslash G)$ admits the presentation
\begin{equation*}
  (\Pi T(\hat X\sslash G))\sslash \Isom(\mathbb T^{1\vert1}) \cong \Pi T\hat X\sslash(\Isom(\mathbb T^{1\vert1}) \times G).
\end{equation*}

Finally, let us describe the map $\mathcal Q$ relating the $\mathbb
T$-equivariant and $\mathbb R\sslash\mathbb Z$-equivariant moduli
stacks of Euclidean $0\vert1$-manifolds in this special situation.
Fix $(\Sigma, P, Q, f) \in \mathfrak B^{\mathbb T}(\hat X\sslash G)_S$
and let $(\Sigma, P, Q_!, f_!,\rho) \in \mathfrak B^{\mathbb
  R\sslash\mathbb Z}(\hat X\sslash G)_S$ be its image.  Locally in
$S$, $f$ determines a conjugacy class of $G$ and $Q_! \to P$ is the
$G$-bundle with that holonomy around the fibers of $P \to \Sigma$.
More specifically, let us assume $P$ and $Q$ are trivial; if $S$ is
connected, then $f$ determines an element $g \in G$, namely the one
corresponding to the connected component of $\hat X = \amalg_{g\in
  G}X^g$ in which $f|_{\Sigma \times \{ e \}}$ takes values.  Then
$Q_! \to P$ is the $G$-bundle built as a quotient
\begin{equation*}
  Q_! = (\Sigma \times \mathbb R \times G)/\mathbb Z \to P = \Sigma
  \times \mathbb T,
\end{equation*}
where the $\mathbb Z$-action is generated by the diffeomorphism
prescribed, on $T$-points, by $(s, t, h) \mapsto (s, t+1, gh)$.  The
map $f_!\colon Q_! \to \hat X$ is induced by the $\mathbb Z$-invariant
map $(s,t,h)\mapsto f(s,e)\cdot h$.  The automorphism of $Q_!$
determined by the $G$-component of $f_!$ can be expressed as
\begin{math}
  (s,t,h) \mapsto (s,t,gh).
\end{math}

\section{The Chern character for global quotients}
\label{sec:chern-char-glob}

In this section, we show how to recover, in terms of dimensional
reduction of field theories, the delocalized Chern character of
\textcite{MR928402} (and, before them, \textcite{MR0461489}),
concerning the case of a finite group $G$ acting on a manifold $X$.

We start by briefly recalling the classical construction of $\ch_G$ in
section~\ref{sec:baum-connes-chern}.  On the field theory side, we can
associate to each vector bundle with connection $V$ on an orbifold
$\mathfrak X$ a field theory $E_V \in 1\vert1\EFT(\mathfrak X)$.  For
the sake of brevity, we will only describe, in section
\ref{sec:parall-transp-field}, the partition function of this theory,
denoted $Z_V \in C^\infty(\mathfrak K(\mathfrak X))$.  Finally, in
section \ref{sec:proof-global-quot-ch} we prove
theorem~\ref{thm:global-quot-ch}.

\subsection{The Baum--Connes Chern character}
\label{sec:baum-connes-chern}

As before, we write $\hat X = \{(x,g) \in X \times G \mid xg = x\} =
\coprod_{g\in G} X^g$.  The equivariant Chern character is a ring
homomorphism
\begin{equation}
  \label{eq:6}
  \ch_G\colon K^i_G(X) \to H_G^i(\hat X; \mathbb C)
  \cong \left[\bigoplus\nolimits_{g\in G}H^i(X^g; \mathbb C)\right]^G.
\end{equation}
Here, $i \in \mathbb Z/2$ and ordinary cohomology is $\mathbb
Z/2$-graded.  We recall that the equivariant ordinary cohomology of
$\hat X$ with coefficients in $\mathbb C$ can be identified with the
invariants in its nonequivariant cohomology; this can be deduced from
the Serre spectral sequence for the fibration $EG \times_G X \to BG$
using the fact that the integral reduced cohomology of a finite group
is torsion.

For each $g \in G$, we define the homomorphism $\ch_g\colon K^i_G(X)
\to H^i(X^g;\mathbb C)$ as the composition
\begin{equation*}
  K^i_G(X) \to K^i_{\langle g\rangle}(X^g) \cong K^i(X^g) \otimes R(\langle
  g\rangle) \xrightarrow{\ch \otimes \tr_g}
  H^i(X^g;\mathbb Q) \otimes_{\mathbb Q} \mathbb C.
\end{equation*}
(The middle isomorphism is due to the fact that the action of the
cyclic group $\langle g\rangle$ generated by $g$ on $X^g$ is trivial;
$\ch$ denotes the usual, nonequivariant Chern character, and $\tr_g$
assigns to any representation of $\langle g\rangle$ the trace of the
operator $g$.)  Finally, we let $\ch_G\colon K^i_G(X) \to H_G^i(\hat
X; \mathbb C)$ be the direct sum of all $\ch_g$ via the identification
\eqref{eq:6}.

Concretely, the effect of $\ch_g$ on the $K$-theory class represented
by a $G$-equivariant vector bundle $V \to X$ is the following.  For
each $x\in X^g$, the fiber $V_x$ is a representation of the cyclic
group generated by $g$.  Let $\lambda_1, \dots \lambda_r$ be distinct
eigenvalues, and $V_x^1, \dots, V_x^r$ the corresponding eigenspaces.
Each $\lambda_i$ is a $\abs{g}$-root of unity, so $V\vert_{X^g}$ can
be written as direct sum of vector bundles
\begin{equation*}
  V\vert_{X^g} = V^1 \oplus \cdots \oplus V^r.
\end{equation*}
Then
\begin{equation*}
  \ch_g(V) = \sum \lambda_i \ch(V^i) \in H^{\mathrm{ev}}(X^g;\mathbb C)
\end{equation*}
and
\begin{equation}
  \label{eq:9}
  \ch_G(V) = \oplus_{g \in G} \ch_g(V) \in
  \left[\bigoplus\nolimits_{g\in G}H^{\mathrm{ev}}(X^g; \mathbb C)\right]^G.
\end{equation}

This is the correct equivariant extension of the Chern character in
the sense that, for compact $X$, $\ch_G$ induces an isomorphism after
tensoring with $\mathbb C$.  Note that, in light of the
Atiyah--Segal completion theorem \cite{MR0259946}, the so-called
delocalized cohomology ring $H^*_G(\hat X;\mathbb C)$ is a stronger
invariant than ordinary equivariant cohomology of $X$.  For instance,
taking $X = \mathrm{pt}$, $\ch_G$ provides an identification between
the complexified representation ring $K^0_G(\mathrm{pt}) \otimes
\mathbb C$ and the ring of characters of $G$.  On the other hand,
$\tilde H^*_G(\mathrm{pt}; \mathbb C) = 0$.

\subsection{Parallel transport and field theories}
\label{sec:parall-transp-field}

Let $\mathfrak X$ be a stack on $\SM$ and $V\colon \mathfrak X \to
\mathrm{Vect}^{\nabla}$ a vector bundle with connection.  (If
$\mathfrak X$ is representable by a manifold, then, by the Yoneda
lemma, a fibered functor $\mathfrak X \to \Vect^\nabla$ is just a
vector bundle with connection on $\mathfrak X$.  In general, $V$
provides a natural assignment, to each $S$-point $x\colon
S\to\mathfrak X$, of a vector bundle with connection $V_x$ on $S$.)
Then we would like to construct a field theory $E_V \in
1\vert1\EFT(\mathfrak X)$ using parallel transport along superpaths in
$\mathfrak X$.  Roughly speaking, this EFT assigns to a superpoint
$x\colon\spt \to \mathfrak X$ the fiber $V_x$, and to a bordism
between those the super parallel transport map constructed by
\textcite{MR2407109}.  It is then part of the conjecture of Stolz and
Teichner on the relation between $1\vert1$-EFTs and $K$-theory that,
for reasonable $\mathfrak X$, the field theory above corresponds to
the $K$-theory class represented by $V$.

A construction of the field theory $E_V$ necessitates a longer
discussion on the bordism category $1\vert1\EBord(\mathfrak X)$, and,
for $\mathfrak X$ an orbifold, is given in a subsequent paper
\cite{arXiv:1801.03016}.  In any case, we are presently only
interested in its partition function, that is, the function $Z_V \in
C^\infty(\mathfrak K(\mathfrak X))$ obtained by restricting $E_V$ to
closed, connected bordisms.  (Note that the reduced field theory
$\mathrm{red}(E_V)$ relevant for theorem~\ref{thm:global-quot-ch} only
depends on $Z_V$.)  The partition function admits a straightforward
description independent of the details of the construction of the
full EFT; the goal of this subsection is to provide a detailed
construction of the functor
\begin{equation*}
  V \in \Vect^\nabla(\mathfrak X)
  \longmapsto Z_V \in C^\infty(\mathfrak K(\mathfrak X)).
\end{equation*}

We start recalling Dumitrescu's super version of parallel transport,
modified to better fit our conventions and perspective.  Fix, as
above, a vector bundle with connection $V\colon \mathfrak X \to
\Vect^{\nabla}$ and a map $\psi\colon S \times \mathbb R^{1\vert1} \to
\mathfrak X$, which we think of as an $S$-family of superpaths.  Fix
also sections $a, b\colon S \to S \times \mathbb R^{1\vert1}$, which
we think of as specifying endpoints of the superinterval $[a, b]
\subset S \times \mathbb R^{1\vert1}$.  The composition
\begin{equation*}
  S \times \mathbb R^{1\vert1}
  \overset \psi\longrightarrow \mathfrak X
  \overset V\longrightarrow \Vect^\nabla
\end{equation*}
determines a vector bundle $V_\psi$ over $S \times \mathbb
R^{1\vert1}$ with connection
\begin{equation*}
  \nabla\colon C^\infty(S \times \mathbb R^{1\vert1}; V_\psi)
  \to \Omega^1(S \times \mathbb R^{1\vert1}; V_\psi).
\end{equation*}
Further restricting via $a$ and $b$ gives us vector bundles
$V_{\psi(a)}$ and $V_{\psi(b)}$ over $S$.  Now, we define a section
$s$ of $V_\psi$ to be \emph{parallel} if $\nabla_D s = 0$.  (Here, as
in appendix~\ref{sec:eucl-superm}, $D = \partial_\theta - \theta
\partial_t$ is the standard left-invariant vector field on $\mathbb
R^{1\vert1}$.  Since $D^2 = -\partial_t$, this can be though of as a
``half-order'' differential equation.)  It can be shown
\cite[proposition~3.1]{MR2407109} that any section $s_a$ of
$V_{\psi(a)}$ determines a unique parallel section $s$ of $V_\psi$.
Parallel transport is then the linear map
\begin{equation*}
  \mathrm{SP}(\psi, a, b)\colon V_{\psi(a)} \to V_{\psi(b)}
\end{equation*}
obtained by restricting to $V_{\psi(b)}$ the parallel section with
given value on $V_{\psi(a)}$.

The main properties of super parallel transport are established in
\cite[theorem 3.5]{MR2407109}.  We recall them here for the
convenience of the reader.
\begin{enumerate}
\item[(SP1)] If a map $F\colon S' \times \mathbb R^{1\vert1} \to S
  \times \mathbb R^{1\vert1}$ covering $f\colon S' \to S$ is
  conformal, that is, preserves the distribution generated by $D$,
  and, moreover, $F(a') = a \circ f$ and $F(b') = b \circ f$, then
  \begin{equation*}
    \mathrm{SP}(F^*\psi, a', b') = f^*\mathrm{SP}(\psi, a, b).
  \end{equation*}
\item[(SP2)] Given $a, b, c\colon S \to \mathbb R^{1\vert1}$, we have
  \begin{equation*}
    \mathrm{SP}(\psi, a, c) = \mathrm{SP}(\psi, b, c)
    \circ \mathrm{SP}(\psi, a, b).
  \end{equation*}
\end{enumerate}
Property (SP1) encapsulates both the fact that parallel transport
depends smoothly on the input data $\psi$, $a$, $b$, and that it is
invariant under conformal reparametrization of superpaths (and in
particular under the Euclidean reparametrizations we are concerned
with in this paper).  Note that it is \emph{not} invariant under
arbitrary reparametrizations.  Property (SP2) is the expected
compatibility with gluing of superintervals.

\begin{remark}
  Dumitrescu describes parallel transport with respect to both $D$ and
  its right-invariant counterpart $\partial_\theta +
  \theta\partial_t$, emphasizing the latter
  \cite[remark~3.3]{MR2407109}.  Because of the way we set up super
  Euclidean structures, we must work with $D$-parallel transport.
  This leads to different sign conventions, for instance in the proof
  of proposition~\ref{prop:10}.  The second half of Dumitrescu's paper
  is concerned with the more subtle notion of a parallel transport
  operation for Quillen superconnections; we will not deal with this
  case here.
\end{remark}

We now return to our task of constructing, out of $V \in
\mathrm{Vect}^\nabla(\mathfrak X)$, a function $Z_V$ on $\mathfrak
K(\mathfrak X)$.  The idea is to associate, to each $S$-point $(K,
\psi\colon K \to \mathfrak X)$ of $\mathfrak K(\mathfrak X)$, the
supertrace of the holonomy along $K$.  To make this precise, note that,
by proposition~\ref{prop:3}, it suffices to restrict to $K$ of the
form $K_l = (S \times \mathbb R^{1\vert1})/\mathbb Zl$ for some length
parameter $l\colon S \to \mathbb R^{1\vert1}_{>0}$.  With a slight
abuse of notation, we still write $\psi$ for the induced periodic map
$S \times \mathbb R^{1\vert1} \to \mathfrak X$, so that $V_{\psi(0)} =
V_{\psi(l)}$.  Finally, we set
\begin{equation}
  \label{eq:8}
  Z_V(K_l, \psi) = \str(\mathrm{SP}(\psi, 0, l)) \in C^\infty(S).
\end{equation}

\begin{proposition}
  \label{prop:7}
  For any morphism $F\colon (K_{l'}, \psi') \to (K_l, \psi)$ covering
  $f\colon S' \to S$ in $\mathfrak K(\mathfrak X)$, we have
  $Z_V(K_{l'}, \psi') = f^*Z_V(K_l, \psi)$.  Therefore, \eqref{eq:8}
  indeed defines a function $Z_V \in C^\infty(\mathfrak K(\mathfrak
  X))$.
\end{proposition}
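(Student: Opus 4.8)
The plan is to show that $Z_V$ is invariant under morphisms by reducing to the two basic types of morphism in $\mathfrak K(\mathfrak X)$ and invoking the two properties (SP1) and (SP2) of super parallel transport. Recall from the discussion preceding proposition~\ref{prop:3} that, working locally in $S'$, any morphism $F\colon (K_{l'}, \psi') \to (K_l, \psi)$ is determined by a smooth map $S' \to \mathbb R^{1\vert1} \rtimes \mathbb Z/2$ covering $f\colon S' \to S$ and fixing a relation between $l'$ and $f^*l$. Since $C^\infty(\mathfrak K(\mathfrak X))$ is a sheaf on $\SM$, it suffices to verify the identity $Z_V(K_{l'}, \psi') = f^*Z_V(K_l, \psi)$ locally, so I may assume $F$ is of this explicit form. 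The key observation is that such an $F$ lifts to a conformal (indeed Euclidean) map $\tilde F\colon S' \times \mathbb R^{1\vert1} \to S \times \mathbb R^{1\vert1}$ on the universal covers, intertwining the periodic maps $\psi'$ and $\psi$ in the sense that $\tilde F^*\psi = \psi'$ up to the descent data.

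First I would treat the case of a \emph{translation}, i.e.\ the identity component $S' \to \mathbb R^{1\vert1}$. Here the lift $\tilde F$ is a conformal map sending the endpoints $0, l'$ to $c, c + f^*l$ for the translation parameter $c$, and one computes
\begin{equation*}
  Z_V(K_{l'}, \psi') = \str(\mathrm{SP}(\psi', 0, l'))
  = \str\bigl(f^*\mathrm{SP}(\psi, c, c + l)\bigr),
\end{equation*}
using (SP1). The cocycle property (SP2), together with the periodicity $V_{\psi(0)} = V_{\psi(l)}$ that identifies the two endpoints of the supercircle, lets me conjugate: $\mathrm{SP}(\psi, c, c+l)$ and $\mathrm{SP}(\psi, 0, l)$ differ by pre- and post-composition with $\mathrm{SP}(\psi, 0, c)$ and its inverse, so they have the same supertrace by the cyclicity of $\str$. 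This yields $Z_V(K_{l'}, \psi') = f^*\str(\mathrm{SP}(\psi, 0, l)) = f^*Z_V(K_l, \psi)$.

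I expect the main obstacle to lie in the \emph{orientation-reversing} component, corresponding to the nontrivial element of $\mathbb Z/2$ in $\mathbb R^{1\vert1}\rtimes\mathbb Z/2$. Here the lift $\tilde F$ reverses the direction of the superinterval, so that $\mathrm{SP}(\psi', 0, l')$ becomes the parallel transport along the \emph{reversed} path, which by (SP2) is the inverse of $\mathrm{SP}(\psi, 0, l)$ (suitably conjugated). The point to check carefully is that the supertrace of holonomy is insensitive to this reversal: since $\str(T) = \str(T^{-1})$ fails for general $T$, I must use that the relevant structure group preserves the connection and that conjugation-invariance of $\str$, combined with the identification of fibers furnished by the morphism $\xi$ in the defining diagram of $\mathfrak K(\mathfrak X)$, produces a genuine conjugacy $\mathrm{SP}(\psi', 0, l') = f^*\bigl(g\,\mathrm{SP}(\psi, 0, l)\,g^{-1}\bigr)^{\pm}$ whose supertrace matches. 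Sorting out the sign conventions for $D$-parallel transport under orientation reversal (cf.\ the remark following (SP2)) is where the care is needed; once the reversed transport is correctly identified with a conjugate of the original, cyclicity of $\str$ closes the argument. Finally, since the identity holds on each local piece and is compatible with the overlaps (both sides being pullbacks of globally defined data), it glues to the asserted equality on all of $\mathfrak K(\mathfrak X)$, so \eqref{eq:8} descends to a well-defined element $Z_V \in C^\infty(\mathfrak K(\mathfrak X))$.
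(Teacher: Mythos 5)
Your overall strategy matches the paper's: lift the isometry to the universal cover, reduce to the generators of $\mathbb R^{1\vert1}\rtimes\mathbb Z/2$, and combine (SP1), (SP2) and cyclicity of the supertrace. The translation case is handled correctly and is essentially the paper's argument (the paper phrases the conjugation via the identity $\mathrm{SP}(\psi,0,\tilde F(0)) = \mathrm{SP}(\psi,l,\tilde F(l'))$, which encodes the periodicity of $\psi$, but the content is the same).

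The flip case, however, rests on a misreading of the geometry, and this is a genuine gap. The nontrivial element of $\mathbb Z/2$ in $\Isom(\mathbb R^{1\vert1})$ acts by negating the \emph{odd} coordinate, $(t,\theta)\mapsto(t,-\theta)$; it fixes the time coordinate and does \emph{not} reverse the direction of the superinterval. Consequently $\mathrm{SP}(\psi',0,l')$ is never the inverse (or a conjugated inverse) of $\mathrm{SP}(\psi,0,l)$, and the difficulty you flag about $\str(T)\neq\str(T^{-1})$ simply does not arise; conversely, the resolution you sketch --- that the ``reversed'' transport can somehow be re-identified as a conjugate --- is not an argument and would not close the gap if the inverse really did appear. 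The correct treatment of the flip is much easier than what you attempt: the flip sends $D$ to $-D$, hence preserves the distribution generated by $D$ and is conformal in the sense of (SP1), and it takes $0$ to $0$ and $l'=\operatorname{fl}(l)$ to $l$, so (SP1) applies verbatim to give $\mathrm{SP}(\operatorname{fl}^*\psi,0,l')=\mathrm{SP}(\psi,0,l)$ with no supertrace manipulation needed. (The sign-convention remark you cite concerns the choice between the left- and right-invariant vector fields $D$ and $\partial_\theta+\theta\partial_t$, not orientation reversal.) With the flip case replaced by this direct application of (SP1), your proof is complete and agrees with the paper's.
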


\begin{proof}
  Again by proposition~\ref{prop:3}, any isometry $F\colon K_{l'} \to
  K_l$ lifts to a fiberwise isometry $\tilde F\colon S' \times \mathbb
  R^{1\vert1} \to S \times \mathbb R^{1\vert1}$ over the same $f\colon
  S' \to S$.  If $\tilde F(0) = 0$ and $\tilde F(l') = l$, then the
  proposition follows immediately from (SP1).  Thus, replacing $K_l$
  by its pullback to $S'$ if needed, we may assume that $S' = S$ and
  $f$ is the identity.

  Now, $\tilde F$ is determined by a map $S' \to \mathbb Z/2 \ltimes
  \mathbb R^{1\vert1}$; simple calculations, done separately for the
  case of flips and translations, show that $\mathrm{SP}(\psi, 0,
  \tilde F(0)) = \mathrm{SP}(\psi, l, \tilde F(l'))$.  Thus, using
  (SP2) and the vanishing of supertrace on commutators,
  \begin{align*}
    Z_V(K_l,\psi)
    & = \str(\mathrm{SP}(\psi, \tilde F(0),l) \circ
      \mathrm{SP}(\psi, 0, \tilde F(0))) \\
    & = \str(\mathrm{SP}(\psi, 0, \tilde F(0)) \circ
      \mathrm{SP}(\psi, \tilde F(0), l)) \\
    & = \str(\mathrm{SP}(\psi, l, \tilde F(l')) \circ
      \mathrm{SP}(\psi, \tilde F(0), l)) \\
    & = \str(\mathrm{SP}(\psi, \tilde F(0), \tilde F(l'))).
  \end{align*}
  Finally, using (SP1), we get
  \begin{equation*}
    Z_V(K_{l'}, \psi')
    = Z_V(K_{l'}, F^*\psi) 
    = \str(\mathrm{SP}(\psi, \tilde F(0), \tilde F(l')))
    = Z_V(K_l, \psi),
  \end{equation*}
  which finishes the proof.
\end{proof}

The construction of $Z_V$ is clearly natural in $V$, and thus defines
a functor $Z\colon \mathrm{Vect}^\nabla(\mathfrak X) \to
C^\infty(\mathfrak K(\mathfrak X))$.

\subsection{Proof of theorem \ref{thm:global-quot-ch}}
\label{sec:proof-global-quot-ch}

As before, fix $V\colon X \sslash G \to \mathrm{Vect}^{\nabla}$.  This
map classifies a $G$-equivariant vector bundle over $X$, which we
still call $V$, with a $G$-invariant connection $\nabla$.  To get
started, we need to describe the pullback of $V$ to a supercircle over
$X\sslash G$.

\begin{proposition}
  \label{prop:1}
  Fix a supercircle $\psi\colon K \to X\sslash G$ and denote by
  $\pi\colon Q \to K$ and $f\colon Q \to X$ the principal $G$-bundle
  and $G$-equivariant map classified by $\psi$.  Then there is a
  natural connection-preserving isomorphism of vector bundles
  \begin{equation*}
    \xymatrix{(f^*V)/G \ar[r] \ar[d] & \psi^*V \ar[d] \\ Q/G \ar@{=}[r] &
      K.}
  \end{equation*}
\end{proposition}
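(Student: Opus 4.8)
The plan is to realize both bundles as descent data along the principal $G$-bundle $\pi\colon Q \to K$ and to identify them there. The starting observation is that the classifying data $(Q, f)$ exhibits $\pi\colon Q \to K$ as the pullback of the canonical atlas $p\colon X \to X\sslash G$ along $\psi$; in other words, the square with top edge $f\colon Q \to X$, bottom edge $\psi\colon K \to X\sslash G$, and vertical maps $\pi$ and $p$ is $2$-Cartesian. This is essentially the unwinding of how maps into a quotient stack correspond to principal bundles equipped with an equivariant map (a short direct check identifies $X \times_{X\sslash G} K$ with $Q$), and it comes equipped with a canonical $2$-cell filling the square. Recall also that $V\colon X\sslash G \to \Vect^\nabla$ classifies the equivariant bundle on $X$ precisely by restriction along $p$, so $p^*V$ \emph{is} the $G$-equivariant bundle $V \to X$ together with its $G$-invariant connection.

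Given this, the main step is short. Pulling $V$ back around the two sides of the square and inserting the canonical $2$-cell produces a natural isomorphism $f^*V = f^*p^*V \cong \pi^*\psi^*V$ of vector bundles with connection on $Q$. Now $\pi^*\psi^*V$ carries the tautological $G$-equivariant structure coming from $\pi$, and since $\pi^*$ and $(-)/G$ are mutually inverse equivalences between vector bundles on $K$ and $G$-equivariant bundles on $Q$, its quotient recovers $\psi^*V$. Transporting the equivariant structure across the isomorphism $f^*V \cong \pi^*\psi^*V$ and forming the quotient then yields the asserted map $(f^*V)/G \cong \psi^*V$ over $Q/G = K$.

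Because every ingredient is built from universal constructions (pullback and descent), the resulting isomorphism is automatically natural in the input $(K, \psi)$, hence compatible with morphisms of supercircles. For the connection statement, recall that $G$ is finite, so $\pi$ is a covering map: the $G$-invariant connection on $V \to X$ pulls back along $f$ to a $G$-invariant connection on $f^*V$, which matches the pullback of the connection on $\psi^*V$ under the isomorphism above and descends along $\pi$ to the connection on $\psi^*V$. Thus the isomorphism is connection-preserving.

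The main obstacle I anticipate is purely bookkeeping: one must verify that the $G$-equivariant structure on $f^*V$ induced by the equivariance of $f$ and the equivariant structure of $V$ is exactly the one obtained by transporting the tautological structure on $\pi^*\psi^*V$ across the isomorphism. This is precisely the content encoded by the $2$-cell of the Cartesian square, so the verification amounts to tracing that $2$-cell through the pullback functors. No genuine computation is involved, but the identification of these two equivariant structures is where the substance of the statement resides.
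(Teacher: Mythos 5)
Your proposal is correct and follows essentially the same route as the paper: both identify the square with edges $f$, $\pi$, $x$ (your $p$), $\psi$ as $2$-commutative (indeed $2$-Cartesian) and then descend $f^*V$ along the covering $Q \to K$, using uniqueness of descent to match the result with $\psi^*V$. The paper phrases the descent via the groupoid $Q \times_K Q \cong Q \times G \rightrightarrows Q$ rather than via the equivalence between $G$-equivariant bundles on $Q$ and bundles on $K$, but these are the same argument.
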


\begin{proof}
  Consider the diagram
  \begin{equation*}
    \xymatrix{Q \times_{K} Q \ar@<.6ex>[r] \ar@<-.6ex>[r]
      & Q \ar[r]^f \ar[d]^\pi & X \ar[d]^x\\
      & K \ar[r]^\psi & X\sslash G \ar[r]^V & \mathrm{Vect}^\nabla.}
  \end{equation*}
  Here, $x\colon X \to X\sslash G$ is the standard atlas and hence
  $V\circ x$ classifies the vector bundle with connection $V\to X$.
  Notice that the square $2$-commutes.  In fact, the top-right
  composition $Q \to X\sslash G$ classifies the trivial $G$-bundle $Q
  \times G \to Q$, while the left-bottom composition classifies the
  $G$-bundle $\pi^* Q \to Q$ (together with the corresponding
  equivariant maps into $X$ induced by $f$), and these two $Q$-points
  of $X\sslash G$ are isomorphic.

  Now, the composition $V\circ x \circ f$ classifies the vector bundle
  $f^*V \to Q$, and the $G$-equivariance information provides descent
  data for the covering $Q \times_K Q \cong Q \times G
  \rightrightarrows Q \to K$.  The descended vector bundle with
  connection can be described explicitly as $(f^*V)/G \to K$.  Thus,
  $2$-commutativity of the square above and the uniqueness property of
  descent provide a canonical isomorphism $\psi^*V \cong (f^*V)/G$.
\end{proof}

Our goal now is to identify $\mathrm{red}(Z_V) \in C^\infty(\mathfrak
B(\hat X\sslash G))$, the dimensional reduction of $Z_V \in
C^\infty(\mathfrak K(\mathfrak X))$, with a ($G$-invariant, even,
closed) differential form on $\hat X$, following the identifications
of theorem~\ref{thm:0|1-EFT-orbifold}.  To do so, we need to consider
the versal $\Pi T\hat X$-family $\Sigma_{\mathrm{versal}} \in
\mathfrak B(\hat X\sslash G)$
\begin{equation}
  \label{eq:7}
  \begin{gathered}
    \xymatrix{\Pi T\hat X \times \mathbb
      R^{0\vert1}\ar[r]^-{\operatorname{ev}} \ar[d]
      & \hat X \xrightarrow{\hat x} \hat X\sslash G\\
      \Pi T\hat X}
  \end{gathered}
\end{equation}
and calculate the smooth function on the parameter manifold $\Pi T\hat
X$ assigned to it via $\mathrm{red}(Z_V)$.  Here, $\hat x$ denotes the
usual atlas.

\begin{proposition}
  We have the following identity in $C^\infty(\Pi T\hat X)$:
  \begin{equation*}
    \mathrm{red}(Z_V)(\Sigma_{\mathrm{versal}}) = Z_V(K,Q,f),
  \end{equation*}
  where $(K, Q, f) \in \mathfrak K(\mathfrak X)$ is as defined below.
\end{proposition}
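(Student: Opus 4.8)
The plan is to compute $\mathrm{red}(Z_V)(\Sigma_{\mathrm{versal}})$ by unwinding the definition of dimensional reduction along the span
\begin{equation*}
  \mathfrak B(\Lambda\mathfrak X)
  \xleftarrow{\mathcal P} \mathfrak B^{\mathbb T}(\Lambda\mathfrak X)
  \xrightarrow{\mathcal Q} \mathfrak B^{\mathbb R\sslash\mathbb Z}(\Lambda\mathfrak X)
  \xrightarrow{\mathcal R} \mathfrak K(\mathfrak X),
\end{equation*}
and then recognizing the resulting data as an explicit supercircle. Recall that $\mathrm{red}(Z_V)$ is, by construction, the unique function on $\mathfrak B(\Lambda\mathfrak X)$ whose pullback along $\mathcal P$ equals $(\mathcal R\circ\mathcal Q)^*Z_V$; such a function exists and is unique by proposition~\ref{prop:5}. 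Consequently, for \emph{any} lift $\tilde\Sigma \in \mathfrak B^{\mathbb T}(\hat X\sslash G)$ of $\Sigma_{\mathrm{versal}}$ along $\mathcal P$ we have
\begin{equation*}
  \mathrm{red}(Z_V)(\Sigma_{\mathrm{versal}})
  = \bigl((\mathcal R\circ\mathcal Q)^*Z_V\bigr)(\tilde\Sigma)
  = Z_V\bigl(\mathcal R(\mathcal Q(\tilde\Sigma))\bigr).
\end{equation*}
Thus the content of the proposition is purely the explicit evaluation of $\mathcal R\circ\mathcal Q$ on a conveniently chosen lift, and this evaluation \emph{defines} the supercircle $(K, Q, f)$ appearing on the right-hand side.

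First I would produce the lift. Under the presentation $\mathfrak B^{\mathbb T}(\hat X\sslash G) \cong \Pi T\hat X\sslash(\Isom(\mathbb T^{1\vert1})\times G)$ of proposition~\ref{prop:2}, the versal family \eqref{eq:7} lifts to the object over $\Pi T\hat X$ given by the trivial $\mathbb T$-bundle $P = \Sigma\times\mathbb T^{1\vert1}$ carrying the standard connection (whose curvature is the tautological $2$-form $\zeta$), together with the tautological $G$-bundle $Q\to\Sigma$ and the evaluation map $f\colon Q\to\hat X$ read off from $\Sigma_{\mathrm{versal}}$. This is a valid preimage, since $\mathcal P$ merely forgets the $\mathbb T$-bundle datum.

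Next I would run this object through $\mathcal Q$ and $\mathcal R$ using the explicit formulas obtained at the end of section~\ref{sec:global-quotients}. Applying $\mathcal Q$ replaces $Q\to\Sigma$ by $Q_! = (\Sigma\times\mathbb R\times G)/\mathbb Z \to P$, the bundle with prescribed holonomy $g$ around the fibers of $P\to\Sigma$ (where $g$ is the group element singled out by the component of $\hat X = \amalg_{g}X^g$ hit by $f$), equipped with the induced equivariant map $f_!$ and equivariance datum $\rho$. Applying $\mathcal R$ then endows $P$ with the Euclidean $1\vert1$-structure determined by the connection form via theorem~\ref{thm:1}, discards the $\mathbb T$-action together with $\rho$, and composes $f_!$ with the forgetful map $\hat X\to X$; the result is precisely the supercircle $(K, Q, f)$ we take as our definition, and the displayed identity is then exactly the reformulation above.

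The step that requires the most care is matching the Euclidean structure on $K = \mathcal R(\mathcal Q(\tilde\Sigma))$ produced abstractly by theorem~\ref{thm:1} with the concrete length-parametrized model $K_l$ underlying the defining formula \eqref{eq:8} for $Z_V$; this reconciliation of the connection form with a choice of length parameter $l$ is what gives meaning to the right-hand side $Z_V(K,Q,f)$. Everything else is bookkeeping along the span, with independence of the chosen lift guaranteed by proposition~\ref{prop:5}.
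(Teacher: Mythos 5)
Your proposal is correct and follows essentially the same route as the paper: choose the lift of $\Sigma_{\mathrm{versal}}$ given by the trivial $\mathbb T$-bundle with standard connection, push it through $\mathcal Q$ and $\mathcal R$ using the explicit global-quotient formulas of section~\ref{sec:global-quotients}, and read off the resulting supercircle as the definition of $(K,Q,f)$. Your explicit appeal to proposition~\ref{prop:5} for independence of the chosen lift is a slightly more careful articulation of a point the paper leaves implicit, but the argument is the same.
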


\begin{proof}
  This is an exercise in chasing through the definition of dimensional
  reduction.  The first step is to pick a preimage of
  $\Sigma_{\mathrm{versal}}$ via $\mathcal P$.  Such a preimage is
  obtained by adding to \eqref{eq:7} the trivial principal $\mathbb
  T$-bundle with standard connection over $\Pi T\hat X \times \mathbb
  R^{0\vert1}$.  The second step is to map that gadget to $\mathfrak
  B^{\mathbb R\sslash\mathbb Z}(\hat X \sslash G)$ via $\mathcal Q$.
  From the considerations at the end of
  section~\ref{sec:global-quotients}, it follows that the resulting
  $\Pi T\hat X$-family, once restricted to $\Pi TX^g \subset \Pi T
  \hat X$, comprises the following data:
  \begin{enumerate}
  \item the family of Euclidean $0\vert1$-manifolds $\Sigma = \Pi TX^g
    \times \mathbb R^{0\vert1} \to \Pi TX^g$,
  \item the trivial $\mathbb T$-bundle $P_g = \Pi TX^g\times \mathbb
    R^{0\vert1} \times \mathbb T \to \Sigma$, with the standard
    connection form $\omega = dt - \theta d\theta$,
  \item the principal $G$-bundle $Q_g = (\Pi TX^g \times \mathbb
    R^{1\vert1} \times G)/\mathbb Z \to P_g$, where the $\mathbb
    Z$-action is generated by the map described on $S$-points by
    \begin{equation*}
      (x,t,h) \in (\Pi TX^g \times \mathbb R^{1\vert1}
      \times G)_S \mapsto (x, 1\cdot t, gh),
    \end{equation*}
  \item the map $f_g\colon Q_g \to \hat X \subset X \times G$ given by
    $(x, t, h) \mapsto (\operatorname{ev}(t_1,x)\cdot h, h^{-1}gh)$,
    which is well defined since $\operatorname{ev}(t_1,x)$ lies in
    $X^g$.
  \end{enumerate}
  Finally, mapping to $\mathfrak K(\mathfrak X)$ via $\mathcal R$
  results in the $\Pi T \hat X$-family $(K, Q, f) \in \mathfrak
  K(\mathfrak X)$ determined, over $\Pi T X^g$, by the data of $P_g$,
  seen as a supercircle, and the map $P_g \to \mathfrak X$ determined
  by $Q_g$ and $\pr_1 \circ f_g$.  By construction, the equation in
  the statement of the proposition holds true.
\end{proof}

Our next task is to compute $Z_V(K, Q, f)$; this is, by definition,
the supertrace of the holonomy (around $K$) of the pullback of $V$ by
the map $\psi\colon K \to X\sslash G$ determined by $(Q, f)$.
Proposition~\ref{prop:1} identifies that pullback of $V$ with the
vector bundle with connection $W = (f^*V)/G \to K$.

\begin{proposition}
  \label{prop:10}
  On $\Pi TX^g$, the supertrace of the holonomy of $W = (f^*V)/G$
  around $K$ is a differential form representative of
  $\mathrm{ch}_g(V)$.
\end{proposition}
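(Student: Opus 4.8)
The plan is to compute the holonomy of $W=(f^*V)/G$ around $K$ directly from Dumitrescu's super parallel transport, and to recognize that it factors as an honest parallel transport in the even circle direction composed with the $g$-twist built into $Q_g$. First I would pull everything back along the fibrewise universal cover $U = \Pi TX^g \times \mathbb R^{1\vert1} \to P_g = K$ and use the unit section to trivialize, so that the relevant superpath $\tilde\psi\colon U \to X$ becomes $\operatorname{ev}(\theta, x)$, the map classifying a point $x\in\Pi TX^g$ evaluated at the odd coordinate $\theta$. The decisive feature, read off from the formula for $f_g$ in the previous proposition, is that $\tilde\psi$ is \emph{independent of the even coordinate $t$}; hence the pulled-back connection is flat in that direction, $\nabla_{\partial_t} = \partial_t$.

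Next I would analyze the half-order equation $\nabla_D s = 0$ defining $\mathrm{SP}$. With $D = \partial_\theta - \theta\partial_t$ and $D^2 = -\partial_t$, the identity $\nabla_D^2 = \nabla_{D^2} + \tfrac12 R(D,D)$ together with flatness in $t$ yields
\begin{equation*}
  \nabla_D^2 = -\partial_t + \mathcal F, \qquad \mathcal F = \tfrac12 F_{\mu\nu}\,\xi^\mu\xi^\nu,
\end{equation*}
where $F$ is the curvature of $V$ and $\xi = D\tilde\psi$ is the odd tangent vector underlying $x$; under the identification $C^\infty(\Pi TX^g) = \Omega^*(X^g)$ of theorem~\ref{thm:0|1-EFT-orbifold}, the even endomorphism $\mathcal F$ is precisely the curvature two-form. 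A $D$-parallel section then satisfies $\partial_t s = \mathcal F s$, and since $\mathcal F$ is $t$-independent this integrates to $\mathrm{SP}(\psi, 0, l) = \exp(\mathcal F)$ over the even circle of length one coming from the Euclidean structure of theorem~\ref{thm:1}.

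I would then pin down the gluing. The deck transformation $t\mapsto t+1$ of $U$ corresponds on $Q_g$ to $(x,t,h)\mapsto(x,t+1,gh)$; combined with the $G$-equivariant structure of $V$ and the fact that $\operatorname{ev}(\theta,x)$ lands in the fixed locus $X^g$, this identifies the holonomy gluing with the fibrewise action of $g$ on $V\vert_{X^g}$. Thus the holonomy around $K$ is the endomorphism $g\circ\exp(\mathcal F)$ of $V_x$, giving $Z_V(K,Q,f) = \str\bigl(g\,\exp(\mathcal F)\bigr)$. Since $\nabla$ is $G$-invariant, $g\vert_{X^g}$ is covariantly constant, so it preserves the eigenbundle splitting $V\vert_{X^g} = V^1\oplus\cdots\oplus V^r$ into $g$-eigenspaces with eigenvalues $\lambda_i$, and $\mathcal F$ respects this splitting; therefore
\begin{equation*}
  \str\bigl(g\,\exp(\mathcal F)\bigr) = \sum_i \lambda_i\,\str\bigl(\exp(\mathcal F^i)\bigr) = \sum_i \lambda_i\,\ch(V^i) = \ch_g(V),
\end{equation*}
with the supertrace accounting for the $\mathbb Z/2$-grading exactly as in the definition of $\ch_g$ in section~\ref{sec:baum-connes-chern}.

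I expect the main obstacle to be the explicit parallel transport step and its normalization: checking that solving the $D$-parallel equation genuinely produces $\exp(\mathcal F)$ with the curvature appearing with the correct factor of $2\pi i$ and the correct sign (in view of our use of $D$ rather than its right-invariant counterpart, as flagged in the remark after properties (SP1) and (SP2)), and that the odd part of the length contributes nothing to the supertrace. The factorization through the $g$-twist and the eigenbundle bookkeeping are then routine.
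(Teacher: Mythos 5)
Your argument is correct and follows essentially the same route as the paper: pass to the superpath covering the circle, identify the gluing of the pulled-back bundle with the fibrewise action of $g$ (the key subtlety), split $V\vert_{X^g}$ into $g$-eigenbundles, and recognize unit-time super parallel transport as $\exp$ of the curvature. The one point of divergence is that where you re-derive $\mathrm{SP}=\exp(\nabla^2)$ by solving the half-order equation $\nabla_D s=0$ directly---precisely the step you flag as the main obstacle, with its sign and normalization worries---the paper simply invokes this as the main theorem of \cite{arXiv:1202.2719}, so no such computation is needed.
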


Here, of course, we employ the usual identification $C^\infty(\Pi
TX^g) \cong \Omega^*(X^g)$.

\begin{proof}
  Consider the standard superpath $c\colon \Pi TX^g \times \mathbb
  R^{1\vert1} \to \Pi TX^g \times \mathbb T^{1\vert1} \subset K$ with
  endpoints $i_t\colon \Pi TX^g \to \Pi TX^g \times \mathbb
  R^{1\vert1}$, $x \mapsto (x,t)$, for $t = 0,1$, and denote by
  $\operatorname{SP}\colon c_0^*W \to c_1^*W$ the parallel transport
  operator along that superinterval.  There is a slight subtlety to
  notice here.  Since the maps $c_0 = c \circ i_0$ and $c_1 = c \circ
  i_1$ are equal, $c_0^*W$ and $c_1^*W$ are the same vector bundle,
  but the correct way to identify them (for the purposes of computing
  the holonomy) is via the action of $g$.  Indeed, let us form the
  pullback of principal bundles
\begin{equation*}
  \xymatrix{
    \tilde Q \ar[d] \ar[r] \ar@/^{3.5ex}/[rr]^(0.8){\tilde f} & Q \ar[d] \ar[r]_-f
    &X\\
    \Pi TX^g \times \mathbb R^{1\vert1} \ar[r]^-c & K &
  }    
\end{equation*}
where $\tilde Q = \Pi TX^g \times \mathbb R^{1\vert1} \times
G$.  Then the pullback $c^*W$ can be identified with the restriction
of the pullback of $V$ to the identity section of $\tilde Q$,
\begin{equation*}
  c^*W \cong (\tilde f^*V)/G \cong (\tilde f^*V)\vert_{\Pi TX^g \times
    \mathbb R^{1\vert1} \times \{ e \}},
\end{equation*}
so we identify
\begin{equation*}
  c^*_0W_x = \tilde f^*V_{(x, 0, e)} = \tilde f^*V_{(x,1,g)}
   \xrightarrow{g^{-1}} \tilde f^*V_{(x,1,e)} = c_1^*W_x.
\end{equation*}

We write, as before, $V\vert_{X^g} = V^1 \oplus \cdots \oplus V^r$ as
a direct sum of eigenspaces for eigenvalues $\lambda_1, \dots,
\lambda_r$, with connection $\nabla_i$ on each component.  Since
$\tilde f\vert_{\Pi TX^g \times \mathbb R^{1\vert1} \times \{ e \}}$
takes values in $X^g$, this induces a similar decomposition of $c^*W$
into a sum of vector bundles $\tilde W^i$ with connection.  We are
finally ready to invoke the calculations of Dumitrescu recovering the
usual (nonequivariant) Chern character in terms of parallel transport.
Denoting by $\operatorname{SP}_i\colon \tilde W^i\vert_{\Pi TX^g
  \times 0} \to \tilde W^i\vert_{\Pi TX^g \times 1}$ the super
parallel transport for one unit of time on each $\tilde W^i$, the main
theorem of \cite{arXiv:1202.2719} states that $\operatorname{SP}_i
=\exp(\nabla_i^2)$, so that
\begin{equation*}
  \ch(\nabla_i) =
  \str(\exp(\nabla_i^2)) = \str(\operatorname{SP}_i).
\end{equation*}

The holonomy endomorphism $H\colon c_0^*W \to c_0^*W$ can be expressed
as the composition
\begin{equation*}
  c_0^*W
  = \bigoplus\nolimits_i \tilde W^i\vert_{\Pi TX^g \times 0}
  \xrightarrow{\oplus_i \operatorname{SP}_i}
   \bigoplus\nolimits_i \tilde W^i\vert_{\Pi TX^g \times 1} =
  c_1^*W \overset g \longrightarrow c_0^*W
\end{equation*}
and we conclude that
\begin{equation*}
  \str(H)
  = \sum_{1 \leq i \leq r} \str (g\operatorname{SP}_i)
  = \sum_{1 \leq i \leq r} \lambda_i \str (\operatorname{SP}_i)
  = \sum_{1 \leq i \leq r} \lambda_i \ch(\nabla_i)
\end{equation*}
is a differential form representative of $\ch_g(V)$.
\end{proof}

Finally, recall that the differential form on $\hat X = \amalg_{g \in
  G} X^g$ associated to the field theory $\operatorname{red}(Z_V)$ is
the form corresponding to the function
$\operatorname{red}(Z_V)(\Sigma_{\mathrm{versal}})$ on $\Pi T\hat X$.
In particular, by theorem~\ref{thm:0|1-EFT-orbifold}, this form is
$G$-invariant and represents an element of $H^{\mathrm{ev}}_G(\hat X;
\mathbb C)$.  By the above proposition and \eqref{eq:9}, this element
is $\ch_G(V)$.  This finishes the proof that the diagram of
theorem~\ref{thm:global-quot-ch} commutes.  The claim that the
vertical arrow in that diagram induces a bijection after passing to
concordance classes is the content of
theorem~\ref{thm:0|1-EFT-orbifold}.

\appendix

\section{Group actions on stacks}
\label{sec:group-actions-stacks}

We briefly review the definitions of group action on a stack and
quotient of a stack, following \textcite{MR2125542} and
\textcite{arXiv:1206.5603}, and then prove a useful lemma
(proposition~\ref{prop:6}).  Note that limits and colimits here are
always taken in the sense of bicategories.  These are often called
$2$-(co)limits, bi(co)limits or homotopy (co)limits.

\subsection{Basic definitions}
\label{sec:basic-definitions-action}

Let $\mathfrak X$ be a groupoid fibration over a site $\mathfrak S$
and $G$ a strict monoid object in the $2$-category of fibrations over
$\mathfrak S$.  We denote by $m\colon G \times G \to G$ and $1\colon
\mathrm{pt} \to G$ the multiplication law and unit map of $G$.  Then
we define a \emph{(left) action of $G$ on $\mathfrak X$} to be a map
of groupoid fibrations $\mu\colon G \times \mathfrak X \to \mathfrak
X$ together with (necessarily invertible) $2$-morphisms $\alpha,
\mathfrak a$ as in the diagram below.
\begin{equation*}
  \xymatrix{G \times G \times \mathfrak X \ar[r]^-{m \times
      \mathrm{id}} \ar[d]_{\mathrm{id} \times \mu}
    & G \times \mathfrak X \ar[d]^\mu \\
    G \times \mathfrak X \ar[r]^\mu \ar@{=>}[ur]^\alpha& \mathfrak X}
  \qquad
  \xymatrix{G \times \mathfrak X \ar[r]^\mu & \mathfrak X \\ \mathfrak X
    \ar[u]^{1 \times \mathrm{id}} \ar[ur]_{\mathrm{id}}^{}="a" \ar@{=>}_{\mathfrak a}"1,1";"a"}
\end{equation*}
In formulas, given an object $x \in \mathfrak X_S$ and $g,h \in G_S$,
and using a dot to denote the group action, we are given natural
isomorphisms
\begin{equation*}
  \alpha^x_{g, h}\colon g \cdot (h\cdot x) \to (gh) \cdot x, \qquad
  \mathfrak a^x\colon 1\cdot x \to x.
\end{equation*}
This data is required to satisfy compatibility conditions that bear
some resemblance to the axioms of a monoidal category.  Firstly, a
kind of pentagon identity relating the different ways in which the
action of three group elements $g, h, k \in G_S$ can be associated:
\begin{equation*}
  \alpha^x_{g, hk} \circ g \cdot \alpha^x_{h,k} = \alpha^x_{gh, k}
  \circ \alpha^{g\cdot x}_{g,h}.
\end{equation*}
Second, a condition on the two ways of associating the action of the
unit and another group element:
\begin{equation*}
  g \cdot \mathfrak a^x = \alpha^x_{g,1} \text{ and } \mathfrak a^{g\cdot x} =
  \alpha^x_{1, g}.
\end{equation*}
It seems appropriate to call $\alpha$ and $\mathfrak a$ the associator
and unitor for the action, in analogy to the terminology used in the
theory of monoidal categories.  We say the action is strict if
$\alpha$, $\mathfrak a$ are both the identity.

Now, suppose we are given fibrations with $G$-action $(\mathfrak X,
\mu, \alpha, \mathfrak a)$ and $(\mathfrak Y, \nu, \beta, \mathfrak
b)$.  Then a \emph{$G$-equivariant map} between them is a morphism of
fibrations $f\colon \mathfrak X \to \mathfrak Y$ together with a
$2$-morphism
\begin{equation*}
  \xymatrix{G \times \mathfrak X \ar[r]^\mu \ar[d]_{\mathrm{id} \times f} & \mathfrak X
  \ar[d]^f\\ G \times \mathfrak Y \ar[r]^\nu \ar@{=>}[ur]^{\rho}&\mathfrak Y}
\end{equation*}
satisfying the following compatibility condition: for each $x \in
\mathfrak X_S$ and $g, h \in G_S$, we have
\begin{equation*}
  f(\alpha^x_{g,h}) \circ \rho_g^{h\cdot x} \circ g\cdot \rho^x_h
  = \rho^x_{gh} \circ \beta^{f(x)}_{g,h} \text{ and } f(\mathfrak
  a^x) \circ \rho^x_1 = \mathfrak b^{f(x)}.
\end{equation*}
We will call $\rho$ the equivariance datum.  Finally, a
\emph{$G$-equivariant $2$-morphism} between morphisms $(f, \rho)$,
$(f', \rho')$ as above is defined to be a $2$-morphism $\xi\colon f
\to f'$ between the underlying fibered functors which is compatible
with $\rho$, $\rho'$ in the sense that
\begin{equation*}
  \rho'^x_g \circ g \cdot \xi^x = \xi^{g\cdot x} \circ \rho^x_g
\end{equation*}
for any $x \in \mathfrak X_S$, $g \in G_S$.

In terms of pasting diagrams, the conditions on $\rho$ are expressed
by the commutativity of the cube whose two halves are depicted below,
\begin{equation*}
  \xymatrix@C=1.2em{
    G\times G\times \mathfrak X \ar[rd]^{m\times\id}\ar[dd]
    &&\\
    & G\times \mathfrak X \ar[rr]^\mu\ar[dd] && \mathfrak X\ar[dd]_{}="a"\\
    G\times G\times \mathfrak Y\ar[rd]^{m\times\id}\ar[rr]^(0.7){\id\times\nu}|(0.535)\hole
    &&G\times\mathfrak Y\ar[rd]_(0.4)\nu|(0.58)\hole\ar@{=>}[ld]_\beta\\
    &G\times\mathfrak Y\ar[rr]^\nu^(0.7){}="b"\ar@{=>}"b";"a"^(0.6)\rho&&\mathfrak Y
  }
  \,
  \xymatrix@C=1.2em{
    G\times G\times \mathfrak X \ar[rr]^{\id\times\mu}_{}="a" \ar[rd]_(0.4){m\times\id}\ar[dd]
    &&G \times\mathfrak X \ar[rd]^\mu\ar[dd]|{\hole}
    \ar@{=>}[dl]_\alpha\\
    & G\times \mathfrak X \ar[rr]^(0.4)\mu && \mathfrak X\ar[dd]\\
    G\times G\times \mathfrak Y\ar[rr]^{\id\times\nu}\ar@{=>}[];"a"_(0.4){\id\times\rho}|(0.68)\hole
    &&G\times\mathfrak Y\ar[rd]^\nu\ar@{=>}[ur]^\rho\\
    &&&\mathfrak Y
  }
\end{equation*}
and commutativity of the prism
\begin{equation*}
  \xymatrix@R=1em{
    & G\times\mathfrak X\ar[dd]|\hole\ar[rd]^\mu_{}="d"\\
    \mathfrak X \ar[ur]^{1\times\id}\ar[rr]_(0.3)\id^(0.6){}="c"\ar[dd]
    \ar@{=>}"d";"c"_{\mathfrak a}&& \mathfrak X\ar[dd]\\
    & G\times\mathfrak Y \ar[rd]^\nu_{}="b"\ar@{=>}[ur]_{\rho}\\
    \mathfrak Y \ar[rr]_\id^{}="a"\ar[ur]^{1\times\id}&&\mathfrak Y.
    \ar@{=>}"b";"a"_{\mathfrak b}
  }
\end{equation*}
Here, all vertical maps are products of $f$ and the identity of $G$.
The condition on $\xi$ is the commutativity of the following diagram.
\begin{equation*}
  \xymatrix@R=4em@C=4em{
    G \times \mathfrak X \ar[r]^-\mu_-{}="b" \ar@/_{3ex}/[d]_{\mathrm{id}
      \times f}^{}="x" \ar@/^{3ex}/[d]^{\mathrm{id} \times f'}^(0.4){}="a"_{}="y"\ar@{=>}"x";"y"^{\id\times\xi}\ar@/^{.5ex}/@{=>}"a";"b"_(0.7){\rho'} & \mathfrak X
  \ar@/_{3ex}/[d]_f^{}="w"_{}="d" \ar@/_{-3ex}/[d]^{f'}_{}="z"\ar@{=>}"w";"z"^{\xi}\\ G \times \mathfrak Y
  \ar[r]_-\nu^(0.4){}="c"\ar@/^{-.5ex}/@{=>}"c";"d"_{\rho}  &\mathfrak Y}
\end{equation*}

We are mostly interested in the case where $G$ is a (representable)
sheaf of groups, but we will also consider the group stack
$\mathrm{pt}\sslash\mathbb Z$.  Note that a strict action of
$\mathrm{pt}\sslash\mathbb Z$ on a stack $\mathfrak X$ is precisely
the data of an automorphism of $\mathrm{id}_{\mathfrak X}$, i.e., a
natural choice of automorphism for each object of $\mathfrak X$.  For
instance, the inertia stack $\Lambda\mathfrak X$ comes with a
canonical $\mathrm{pt}\sslash\mathbb Z$-action.  We will also make use
of a $2$-categorical model for the circle group to be denoted $\mathbb
R\sslash\mathbb Z$.  It is presented by the Lie $2$-group $\mathbb
Z\times\mathbb R\rightrightarrows\mathbb R$ (the transport groupoid of
the $\mathbb Z$-action on $\mathbb R$) endowed with the multiplication
map determined by the group structures on the spaces of objects and
morphisms, and unit $0 \in \mathbb R$.  At the Lie $2$-group level,
there are evident strict homomorphisms
\begin{equation*}
  \mathbb T \leftarrow \mathbb R\sslash\mathbb Z \to \mathrm{pt}\sslash\mathbb Z.
\end{equation*}
The left map gives us an equivalence of group stacks, but in concrete
situations it may be more convenient to consider one model or the
other.

\subsection{Quotient stacks of $G$-stacks}
\label{sec:quot-stacks-stacks}

Let $\mathfrak X$ be a stack endowed with a left action of a sheaf of
groups $G$.  Then we define a new stack $G\backsslash\mathfrak X$
whose $S$-points are given by a left $G$-torsor $P \to S$ together
with a $G$-equivariant map $\psi\colon P\to \mathfrak X$; a morphism
$(P', \psi') \to (P, \psi)$ covering $f\colon S' \to S$ is given by a
diagram
\begin{equation*}
  \begin{gathered}
    \xymatrix@R-1.5em{ P'
      \ar[dd]\ar[rd]^\Phi
      \ar@/^3ex/[rrd]^{\psi'}_{}="a"&&\\
      & P \ar[dd]\ar[r]^{\psi}  \ar_-\xi@{<=}"a"& \mathfrak X\\
      S' \ar[rd]^f &&\\
      & S &}
  \end{gathered}
\end{equation*}
where $\Phi$ is a map of $G$-torsors and $\xi$ an equivariant
$2$-morphism.

There is a faithful functor $i\colon \mathfrak X\to
G\backsslash\mathfrak X$ sending $x\colon S\to \mathfrak X$ to the
$S$-point of $G\backsslash\mathfrak X$ consisting of the trivial
$G$-torsor $G \times S \to S$ together with the $G$-equivariant map
\begin{equation*}
  \psi\colon G\times S \xrightarrow{\mathrm{id}\times x} G\times\mathfrak X
  \xrightarrow\mu \mathfrak X.
\end{equation*}
This makes the diagram below $2$-cartesian.
\begin{equation*}
  \begin{gathered}
    \xymatrix{G \times\mathfrak X \ar[r]^\mu\ar[d]_{\pr_2}&\mathfrak
      X\ar[d]^i\\
      \mathfrak X\ar[r]^i&G\backsslash\mathfrak X}
  \end{gathered}
\end{equation*}

Now, we can attempt to perform the construction of a transport
groupoid $G\times\mathfrak X \rightrightarrows \mathfrak X$ internally
in the $2$-category of stacks.  For this to work, we need to define
internal categories with the appropriate degree of weakness (e.g., if
the action is not strictly unital, the same must be allowed of our
internal categories).  In any case, it is clear that we get a
``nerve'', that is, an augmented (weak) simplicial object
\begin{equation}
  \label{eq:23}
  G\backsslash\mathfrak X \overset i\leftarrow \mathfrak X \leftleftarrows G\times\mathfrak X \threeleftarrows
   G \times G \times\mathfrak X\fourleftarrows\cdots
\end{equation}
Since the various compositions $G^n \times \mathfrak X \to
G\backsslash\mathfrak X$ are not equal, just isomorphic (with a
specified isomorphism), the augmentation depends, strictly speaking,
on a choice.  For definiteness, we take that to be the composition of
$i$ with the projection $\pr_{n+1}\colon G^n\times\mathfrak X
\to\mathfrak X$.

\begin{proposition}
  \label{prop:9}
  The above induces an equivalence of stacks
  \begin{equation*}
    G\backsslash \mathfrak X \xleftarrow j \colim\left( \mathfrak X
      \leftleftarrows G\times\mathfrak X \threeleftarrows G \times G \times\mathfrak X\fourleftarrows\cdots\right).
  \end{equation*}
\end{proposition}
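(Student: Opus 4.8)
The plan is to recognize the augmented weak simplicial object \eqref{eq:23} as the \v{C}ech nerve (bar construction) of the atlas $i\colon \mathfrak X \to G\backsslash\mathfrak X$, and then to invoke effective descent for stacks on $\SM$: for any covering, the base is the bicategorical colimit of the associated \v{C}ech nerve. Concretely, I would produce the comparison functor $j$ as the canonical map out of the colimit determined by the cocone \eqref{eq:23}, and then verify that $j$ is an equivalence by unwinding both sides into descent data for $G$-torsors.

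First I would identify the iterated fiber products. The $2$-cartesian square exhibited above shows $\mathfrak X\times_{G\backsslash\mathfrak X}\mathfrak X \simeq G\times\mathfrak X$, with the two legs corresponding to $\mu$ and $\pr_2$; iterating this identification shows that the $(n+1)$-fold fiber product of $i$ over $G\backsslash\mathfrak X$ is $G^n\times\mathfrak X$. Under this dictionary the simplicial face maps become the multiplication $m$, the action $\mu$, and the projections $\pr$, while the requisite coherence $2$-cells are supplied by the associator $\alpha$ and the unitor $\mathfrak a$ of the action. This exhibits \eqref{eq:23} as exactly the \v{C}ech nerve of $i$, as a weak simplicial object and with the same choice-dependent augmentation.

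Next I would check that $i$ is an epimorphism of stacks, i.e.\ a covering. Given any $S$-point $(P,\psi)$ of $G\backsslash\mathfrak X$, choose a cover $S'\to S$ over which the $G$-torsor $P$ trivializes; over $S'$ the pair $(P,\psi)$ becomes isomorphic to $i(x)$, where $x\colon S'\to\mathfrak X$ is the restriction of $\psi$ along the unit section. Thus $i$ is locally essentially surjective, and a routine check shows it is locally full, so $i$ is a covering in $\SM$. Since stacks satisfy effective descent, the canonical cocone \eqref{eq:23} exhibits $G\backsslash\mathfrak X$ as $\colim(G^\bullet\times\mathfrak X)$, and the induced $j$ is the asserted equivalence. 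Equivalently, and more concretely, one reads off that an $S$-point of the colimit is precisely a local system of objects $x_i\in\mathfrak X_{S_i}$ together with gluing data valued in $G\times\mathfrak X$ over double overlaps and a cocycle condition over triple overlaps encoded by $G\times G\times\mathfrak X$; this is exactly a descent datum for a $G$-torsor $P\to S$ equipped with an equivariant map to $\mathfrak X$, i.e.\ an $S$-point of $G\backsslash\mathfrak X$, and $j$ realizes this correspondence.

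The main obstacle is the $2$-categorical bookkeeping rather than any single hard idea. Because \eqref{eq:23} is only a weak simplicial object and its augmentation depends on the choice of the projections $\pr_{n+1}$, I must check that the cocone it defines is coherent, so that $j$ is well defined, and then verify that $j$ is fully faithful and essentially surjective on $S$-points for every $S$. Both verifications amount to the standard—but notation-heavy—comparison between the bar construction of a group action on a stack and descent data for torsors; the coherences of the action $(\alpha,\mathfrak a)$ and of equivariant maps, encoded by the equivariance datum $\rho$ recorded earlier in this appendix, are exactly what make these comparisons go through.
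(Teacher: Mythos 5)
Your proposal is correct in outline but takes a genuinely different route from the paper. You identify the augmented diagram \eqref{eq:23} as the \v{C}ech nerve of the atlas $i\colon \mathfrak X \to G\backsslash\mathfrak X$ (using the $2$-cartesian square to get $\mathfrak X\times_{G\backsslash\mathfrak X}\mathfrak X\simeq G\times\mathfrak X$ and its iterates), check that $i$ is locally essentially surjective, and then appeal to effectivity of epimorphisms of stacks---the general principle that the base of a covering is the bicategorical colimit of its \v{C}ech nerve. The paper instead argues by hand: it computes the colimit objectwise as a prestack, using the generators-and-relations model of bicategorical colimits of groupoids, constructs an explicit functor $l$ from the full subgroupoid of trivial torsors into the colimit, and verifies $j_S\circ l=\mathrm{id}$ and $l\circ j_S\cong\mathrm{id}$; stackification then supplies the nontrivial torsors. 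Your approach is more conceptual and immediately suggests the right generalizations, but it shifts essentially all of the work onto the cited descent theorem, which is of comparable depth to the proposition itself and is not otherwise assumed in the paper; the paper's argument is elementary and self-contained. Two caveats on your write-up: first, your claim that $i$ is ``locally full'' is false---a morphism $i(x)\to i(y)$ over $S$ is a pair $(g,\xi)$ with $g\in G_S$, and this does not locally come from a morphism in $\mathfrak X$ unless $g$ is the identity (the paper is careful to call $i$ only \emph{faithful})---but this is harmless, since local essential surjectivity is all that effectivity requires (compare $\mathrm{pt}\to\mathrm{pt}\sslash G$, which is an effective epimorphism without being full). Second, if you do want to avoid the black box, your concrete reading of an $S$-point of the colimit as a descent datum $(x_i, g_{ij},\xi_{ij})$ for a $G$-torsor with equivariant map is exactly the content of the paper's computation, so the two arguments converge there; making that paragraph precise, including the coherences supplied by $\alpha$ and $\mathfrak a$ for the weak action, would essentially reproduce the paper's proof.
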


The reader well versed on colimits of categories may be able to
interpret the discussion in sections 3.2 and 4.2 of Ginot and Noohi's
paper \cite{arXiv:1206.5603} as a proof, even though it does not use
the language of colimits.  In any case, we will provide our own
argument.  Before getting there, we give some background on (homotopy)
colimits in $\mathrm{Cat}$.  Given a diagram of small categories
$F\colon D \to \mathrm{Cat}$ indexed by a small $1$-category (with no
strictness requirements on $F$), we denote by $D \ltimes F$ the
Grothendieck construction.  It is the oplax colimit of $F$, meaning
that for each $C \in \mathrm{Cat}$, there is an equivalence between
the category of functors $D \ltimes F \to C$ and the category of lax
natural transformations $F \to \mathrm{const}_C$ and modifications
between them.  The colimit of $F$ is obtained by localizing $D \ltimes
F$ at the class of opcartesian morphisms.

Spelling out the above, the colimit can be described in terms of
generators and relations as follows.  We write $i$, $j$, etc., for
objects of $D$ and $A_i$, $A_j$ for their images via $F$; also, we use
the same notation both for a morphism $f\colon i\to j$ in $D$ and its
image $f\colon A_i \to A_j$.  To build $A = \colim_D A_i$, we start
with the disjoint union $\amalg_{i\in D} A_i$ and then freely adjoin
inverse morphisms
\begin{equation*}
  f_x\colon x \to f(x),\quad f_x^{-1}\colon f(x) \to x
\end{equation*}
for each $f\colon i\to j$ in $D$ and $x \in A_i$; finally, we impose a
number of natural relations, most notably
\begin{equation*}
  \left( x \xrightarrow\phi y \xrightarrow{f_y} f(y) \right)
  = 
  \left( x \xrightarrow{f_x} f(x) \xrightarrow{f(\phi)} f(y) \right),
\end{equation*}
where $\phi$ is a morphism in $A_i$, as well as its counterpart
involving $f_x^{-1}$, $f_y^{-1}$.  This process can be made precise
using the free category generated by a directed graph and congruences.
For more details, including the proof that this has the desired
universal property, see \textcite[chapter~4]{MR2229946}.

\begin{proof}[Proof of proposition~\ref{prop:9}]
  Colimits of stacks are obtained by taking colimits objectwise in
  $\mathfrak S$ and then stackifying.  Thus, it suffices to show that,
  for each $S \in \mathfrak S$,
  \begin{equation*}
    (G\backsslash \mathfrak X)_S \xleftarrow{j_S} \colim\left( \mathfrak X_S
      \leftleftarrows (G\times\mathfrak X)_S \threeleftarrows (G
      \times G \times\mathfrak X)_S \fourleftarrows \cdots\right)
  \end{equation*}
  gives an equivalence of the right-hand side with the full
  subgroupoid $(G\backsslash \mathfrak X)_S^{\mathrm{triv}}$ of the
  left-hand side involving only trivial $G$-torsors.  To simplify the
  argument, we assume, without loss of generality, that the
  $G_S$-action on $\mathfrak X_S$ is strict \cite[proposition
  1.5]{MR2125542}.

  Consider the functor $l\colon (G\backsslash\mathfrak
  X)_S^{\mathrm{triv}} \to \colim_n (G^n \times \mathfrak X)_S$
  prescribed by the following conditions.  First, on $\mathfrak X_S$,
  seen as a subgroupoid of both the domain (via $i\colon \mathfrak X_S
  \hookrightarrow (G\backsslash\mathfrak X)_S^{\mathrm{triv}}$) and
  codomain, $l$ is just the identity.  Second, to the morphism $x \to
  g\cdot x$ in $(G\backsslash\mathfrak X)_S^{\mathrm{triv}}$
  determined by $g \in G_S$, $l$ associates the morphism
  \begin{equation*}
    \mu_g^x\colon x \xrightarrow{\pr_2^{-1}} (g,x)
    \xrightarrow\mu g\cdot x
  \end{equation*}
  in the colimit groupoid.  To see that this is well defined and
  respects compositions, it suffices to check that the outer square of
  the following diagram in the colimit groupoid commutes, for any $g,h
  \in G_S$ and $\xi\colon g\cdot x \to y$ in $\mathfrak X_S$.
  \begin{equation*}
    \xymatrix{
      g\cdot x \ar[rrr]^\xi\ar[dd]^{\mu_h^{g\cdot x}} &&&
      y\ar[dd]^{\mu_h^y}\\
      & (h,g\cdot x) \ar[r]^{\id\times\xi}\ar[ul]_{\pr_2}\ar[dl]^{\mu}&(h,y)\ar[ur]^{\pr_2}\ar[dr]_{\mu}
      \\
      hg\cdot x \ar[rrr]^{h\cdot\xi} &&& h\cdot y
    }
  \end{equation*}
  This follows from the fact that each circuit traveling inside the
  square commutes.

  Now, the composition $j_S\circ l$ is equal to the identity, and we
  claim that the reverse composition is isomorphic to the identity.
  In fact, $l\circ j_S(g_1,\dots,g_n,x) = x$, and we define a natural
  transformation $u\colon \mathrm{id} \to l\circ j_S$ by
  \begin{equation*}
    u_{(g_1,\dots,g_n,x)} = \pr_{n+1}\colon (g_1,\dots,g_n,x) \to x.
  \end{equation*}
  Naturality with respect to those morphisms in the colimit groupoid
  which arise from morphisms in $(G^n\times\mathfrak X)_S$ is obvious.
  A general morphism arising from the indexing category
  $\Delta^{\mathrm{op}}$ is as in the left vertical arrow of the
  diagram below,
  \begin{equation*}
    \xymatrix{
      (g_1,\dots,g_n,x) \ar[rr]^-{\pr_{n+1}} \ar[dd]\ar[dr] &&
      x\ar[dd]^{\mu^x_{g_J}}\\
      &(g_J,x) \ar[ur]^{\pr_2}\ar[dr]_\mu&\\
      (g_{I_1}, \dots,g_{I_k}, g_J\cdot x) \ar[rr]^-{\pr_{k+1}}
      && g_J\cdot x
    }
  \end{equation*}
  where, $I_1,\dots,I_k,J \subset [n]$ are (possibly empty) disjoint
  and adjacent subsets whose union contains $n$, and
  $g_{\{ i_1,\dots,i_j \}} = g_{i_1}\dots g_{i_j}$.  Its image through
  $l\circ j_S$ is the right vertical arrow, and naturality of $u$,
  that is, the claim that the outer square commutes, follows from
  commutativity of the circuits involving $(g_J,x)$.  This finishes
  the proof that $j_S$ is an equivalence onto $(G\backsslash\mathfrak
  X)_S^{\mathrm{triv}}$.
\end{proof}

Now, given a stack $\mathfrak C$, applying $\Fun_{\mathfrak
  S}(\textrm{---},\mathfrak C)$ to diagram \eqref{eq:23} produces a
(weak) cosimplicial groupoid.  The following descent calculation for
$G$-stacks is then a corollary of proposition~\ref{prop:9}.

\begin{proposition}
  \label{prop:6}
  For any stack $\mathfrak C$ and $G$-stack $\mathfrak X$, diagram
  \eqref{eq:23} induces an equivalence of groupoids
  \begin{equation*}
    \Fun_{\mathfrak S}(G\backsslash\mathfrak X, \mathfrak C)
    \cong
    \lim \left(  \Fun_{\mathfrak S}(\mathfrak X,\mathfrak C)
      \rightrightarrows \Fun_{\mathfrak S}(G \times \mathfrak X,\mathfrak C)
      \threerightarrows \cdots \vphantom\fourrightarrows\right).
  \end{equation*}
\end{proposition}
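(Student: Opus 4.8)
The plan is to deduce the statement formally from proposition~\ref{prop:9}, using the principle that a hom-$2$-functor carries a bicolimit to the corresponding bilimit. First I would observe that $\Fun_{\mathfrak S}(-, \mathfrak C)$ is a (contravariant) $2$-functor from stacks to groupoids, and as such preserves equivalences. Applying it to the equivalence $j$ of proposition~\ref{prop:9} therefore yields an equivalence
\begin{equation*}
  \Fun_{\mathfrak S}(G\backsslash\mathfrak X, \mathfrak C)
  \cong
  \Fun_{\mathfrak S}\bigl(\colim\bigl(\mathfrak X \leftleftarrows G\times\mathfrak X \threeleftarrows \cdots\bigr), \mathfrak C\bigr),
\end{equation*}
and the task is reduced to identifying the right-hand side with the asserted totalization.

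The key input is that the colimit appearing in proposition~\ref{prop:9} is a genuine bicolimit in the bicategory of stacks, so by its defining universal property there is a natural equivalence
\begin{equation*}
  \Fun_{\mathfrak S}\bigl(\colim_n G^n\times\mathfrak X, \mathfrak C\bigr)
  \cong
  \lim_n \Fun_{\mathfrak S}(G^n\times\mathfrak X, \mathfrak C),
\end{equation*}
the right-hand side being the bilimit of the (weak) cosimplicial groupoid obtained by applying $\Fun_{\mathfrak S}(-,\mathfrak C)$ to \eqref{eq:23}; note that contravariance reverses the face maps $\leftleftarrows$ into $\rightrightarrows$, matching the indexing in the statement. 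This is the bicategorical counterpart of the elementary identity $\Hom(\colim,-) = \lim\Hom(-,-)$. To verify it at the required level of rigor, I would reduce to the objectwise situation: since colimits of stacks are computed objectwise in $\mathfrak S$ and then stackified, and since $\mathfrak C$ is a stack, mapping out of the stackification agrees with mapping out of the objectwise colimit of prestacks. The claim then follows objectwise from the oplax-colimit (Grothendieck construction) universal property recalled before proposition~\ref{prop:9}, after localizing at the opcartesian morphisms inverted in forming the colimit groupoid, which has the effect of cutting the category of lax cocones down to that of pseudo-cocones, i.e.\ objects of the bilimit.

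Finally I would unwind the bilimit of a cosimplicial groupoid in low degrees. Concretely, an object of $\lim_n \Fun_{\mathfrak S}(G^n\times\mathfrak X,\mathfrak C)$ consists of a fibered functor $E\colon \mathfrak X \to \mathfrak C$ together with a $2$-isomorphism between the two restrictions of $E$ along $G\times\mathfrak X \rightrightarrows \mathfrak X$, subject to the cocycle condition over $G\times G\times\mathfrak X$; a morphism is a $2$-morphism of functors $\mathfrak X\to\mathfrak C$ compatible with this datum. This is precisely the groupoid written as $\lim(\Fun_{\mathfrak S}(\mathfrak X,\mathfrak C)\rightrightarrows\cdots)$ in the statement.

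The main obstacle I expect is the middle step: confirming that the explicit colimit built in the proof of proposition~\ref{prop:9} (objectwise colimit of groupoids followed by stackification) really co-represents cocones in the bicategorical sense, so that $\Fun_{\mathfrak S}(-,\mathfrak C)$ sends it to the honest bilimit. All the genuine $2$-categorical bookkeeping --- tracking the invertible coherence data in the oplax colimit and checking that localization at opcartesian arrows corrects lax cocones to pseudo-cocones --- is concentrated here; the first and last steps are then formal.
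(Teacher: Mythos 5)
Your proposal is correct and follows the same route as the paper, which simply declares proposition~\ref{prop:6} to be a corollary of proposition~\ref{prop:9} via the formal identity $\Fun_{\mathfrak S}(\colim, \mathfrak C) \cong \lim \Fun_{\mathfrak S}(-,\mathfrak C)$ for bicolimits; the ``middle step'' you flag as the main obstacle is already covered by the paper's standing convention that all (co)limits of stacks are taken in the bicategorical sense, so the universal property applies by definition.
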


Again, a concrete description of $2$-limits in the $2$-category of
small categories can be found in \textcite[chapter 5]{MR2229946}.  For
the convenience of the reader, we give a quick summary here.  We fix
the same notations as in the discussion of colimits above; in
particular, we have a diagram $F\colon D \to \mathrm{Cat}$.  Then (a
model for) the limit of $F$ is the category whose objects are (pseudo)
natural transformations $\Delta_{\mathrm{pt}} \to F$ with domain the
constant functor with value the discrete category with one object, and
whose morphisms are modifications between them.  In concrete terms, an
object consists of a collection of objects $a_i \in A_i$ for each $i
\in D$ together with isomorphisms $\tau_f\colon f(a_i) \to a_j$ for each
morphism $f\colon i \to j$ in $D$; these data are required to satisfy
certain coherence conditions.  A morphism $(a'_i, \tau_f') \to
(a_i,\tau_f)$ consists of a collection of morphisms $a_i' \to a_i$ in
$A_i$ for each $i \in D$, subject to appropriate conditions.

\begin{proposition}
  \label{prop:4}
  Given a homomorphism of sheaves of groups $h\colon G\ \to H$ and an
  $H$-stack $\mathfrak X$, we have an equivalence
  \begin{equation*}
    G \backsslash \mathfrak X
    \xrightarrow\cong
    G\backsslash \mathrm{pt}
    \times_{H\backsslash \mathrm{pt}}
    H \backsslash \mathfrak X.
  \end{equation*}
\end{proposition}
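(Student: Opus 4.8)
The plan is to unwind the $2$-fiber product on the right-hand side and recognize it as an induction--restriction correspondence for equivariant maps, bearing in mind that $G$ acts on the $H$-stack $\mathfrak X$ through $h$. Write $h_*P = (H \times P)/G$ for the $H$-torsor induced from a left $G$-torsor $P$ along $h$, where $G$ acts by $g\cdot(k,p) = (kh(g)^{-1}, g\cdot p)$, and let $\iota\colon P \to h_*P$, $p \mapsto [1,p]$, be the canonical inclusion, which is equivariant along $h$ since $\iota(g\cdot p) = [1, g\cdot p] = [h(g), p] = h(g)\cdot\iota(p)$. The map $G\backsslash\pt \to H\backsslash\pt$ is $P \mapsto h_*P$, while $H\backsslash\mathfrak X \to H\backsslash\pt$ forgets the equivariant map. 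By the universal property of the $2$-fiber product, an $S$-point of the right-hand side therefore consists of a $G$-torsor $P$, an object $(R,\phi)$ of $(H\backsslash\mathfrak X)_S$, and an isomorphism $\theta\colon h_*P \to R$ of $H$-torsors; using $\theta$ to eliminate $R$, this is the same datum as a $G$-torsor $P$ together with an $H$-equivariant map $\phi\colon h_*P \to \mathfrak X$, and morphisms unwind analogously.

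First I would construct the comparison functor $\Theta\colon G\backsslash\mathfrak X \to G\backsslash\pt \times_{H\backsslash\pt} H\backsslash\mathfrak X$, sending $(P,\psi)$ to $(P,(h_*P,\phi_\psi),\id)$, and claim its inverse is restriction along $\iota$. Thus the entire statement reduces to the following assertion: for each $G$-torsor $P$, pullback along $\iota$ gives an equivalence from the groupoid of $H$-equivariant maps $h_*P \to \mathfrak X$ (with equivariant $2$-morphisms) to the groupoid of $G$-equivariant maps $P \to \mathfrak X$, where $G$ acts on $\mathfrak X$ through $h$. The inverse to restriction is an extension, or descent, construction: given a $G$-equivariant $\psi\colon P \to \mathfrak X$, form the map $H \times P \to \mathfrak X$, $(k,p)\mapsto k\cdot \psi(p)$, and descend it along the free $G$-quotient $H\times P \to h_*P$ to obtain $\phi_\psi\colon h_*P \to \mathfrak X$ with $\iota^*\phi_\psi \cong \psi$. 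Essential surjectivity of $\Theta$ then follows by transporting $\phi$ back to $h_*P$ via $\theta$ and restricting along $\iota$, and full faithfulness amounts to the same correspondence applied to morphisms and equivariant $2$-morphisms.

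The main obstacle is the bookkeeping of equivariance data, that is, upgrading this naive correspondence to an equivalence carrying all the $2$-categorical coherence. To descend $(k,p)\mapsto k\cdot\psi(p)$ one must supply descent data over the double overlap $(H\times P)\times_{h_*P}(H\times P) \cong G\times H\times P$ comparing $(g,k,p)\mapsto k\cdot\psi(p)$ with $(g,k,p)\mapsto (kh(g)^{-1})\cdot\psi(g\cdot p)$; this isomorphism is assembled from the $G$-equivariance datum $\rho^\psi$ of $\psi$ together with the associator of the $H$-action, and the cocycle condition over the triple overlap $G\times G\times H\times P$ is exactly the compatibility axiom satisfied by $\rho^\psi$. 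The resulting $\phi_\psi$ acquires an $H$-equivariance datum by the same descent, and one checks that restriction and extension are mutually inverse up to natural isomorphism, that they are compatible with morphisms, and that everything is natural in $S$; the latter naturality then promotes the levelwise equivalences of groupoids to an equivalence of stacks. To lighten the coherence computations I would first invoke \cite[proposition~1.5]{MR2125542} to replace the $H$-action, and hence the induced $G$-action, by an isomorphic strict one, after which the associator and unitor drop out and the triple-overlap condition reduces to the bare cocycle identity for $\rho^\psi$.
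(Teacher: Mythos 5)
Your argument is correct, but it takes a genuinely different route from the paper's. The paper proves this by appealing to proposition~\ref{prop:9}: each of the three quotient stacks is the stackification of an explicit transport-groupoid prestack (objects $x \in \mathfrak X_S$, morphisms $(g,\xi)$ with $\xi\colon g\cdot x \to y$), stackification commutes with $2$-fiber products, and the comparison functor is then checked to be fully faithful and essentially surjective by a short computation with group elements at the prestack level --- no torsors or descent appear at all. You instead work directly with the torsor model of $G\backsslash\mathfrak X$, unwind the $2$-fiber product to pairs $(P, \phi\colon h_*P \to \mathfrak X)$, and reduce the statement to an induction--restriction equivalence between $H$-equivariant maps out of $h_*P$ and $G$-equivariant maps out of $P$, with the extension functor built by descent along the free quotient $H \times P \to h_*P$ and the cocycle condition supplied by the coherence axiom for the equivariance datum $\rho^\psi$. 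Your identification of the structure maps (extension of structure group $P \mapsto h_*P$ on $G\backsslash\pt \to H\backsslash\pt$, forgetting $\phi$ on the other leg) agrees with the maps the paper induces from the simplicial diagrams, and your strictification step correctly disposes of the associator and unitor. The trade-off: the paper's proof is shorter because the descent-theoretic work has already been paid for in proposition~\ref{prop:9}, whereas yours is self-contained relative to the torsor definition and makes the equivalence geometrically explicit --- which is genuinely useful downstream, e.g.\ when proposition~\ref{prop:2} needs to recognize concrete objects of $\mathfrak B^{\mathbb T}(\mathfrak X)$ under this identification. One small caveat: descent along $H \times P \to h_*P$ is along a principal $G$-bundle rather than a covering of the site, so you should note that this map admits local sections (hence can be refined by a covering), which is what licenses the descent for a general sheaf of groups $G$; for the finite and discrete groups relevant to the applications this is immediate.
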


\begin{proof}
  The various maps of stacks involved in the statement of the
  proposition are induced by the obvious maps between the simplicial
  diagrams of which they are a colimit (cf.\
  proposition~\ref{prop:9}), as well as the universal property of the
  fiber product.

  It follows from proposition~\ref{prop:9} that $G\backsslash\mathfrak
  X$ is obtained by stackifying the prestack that assigns to $S \in
  \mathfrak S$ the groupoid whose objects are objects $x,y, \ldots \in
  \mathfrak X$, and morphisms $x \to y$ are pairs $(g, \xi)$, where $g
  \in G_S$ and $\xi\colon g\cdot x \to y$ is a morphism in $\mathfrak
  X_S$.  The formation of fiber products commutes with stackification
  (since the latter is built using limits and filtered colimits), so
  the codomain of our fibered functor has a similar description as the
  stackification of a fiber product of prestacks.
  
  Now, in terms of the above data, the map $G
  \backsslash\mathfrak X \to H \backsslash\mathfrak X$
  sends
  \begin{equation*}
    x \mapsto x,\quad (g, \xi) \mapsto (h(g), \xi)
  \end{equation*}
  while $G \backsslash\mathfrak X \to G \backsslash\mathrm{pt}$ sends
  \begin{equation*}
    x \mapsto \mathrm{pt},\quad (g, \xi) \mapsto g.
  \end{equation*}
  Thus it is clear that, at the level of prestacks, our fibered
  functor is fully faithful and essentially surjective, and the result
  follows.
\end{proof}

\section{Low-dimensional Euclidean supergeometry}
\label{sec:eucl-superm}

In the category $\mathrm{SM}$ of supermanifolds, $\mathbb R^{1\vert1}$
has a (noncommutative) group structure given by
\begin{equation*}
  \mathbb R^{1\vert1} \times \mathbb R^{1\vert1} \to \mathbb
  R^{1\vert1}, \quad ((t,\theta), (t',\theta')) \mapsto (t + t' +
  \theta\theta', \theta + \theta').
\end{equation*}
The Lie algebra of left-invariant vector fields is free on one odd
generator $D = \partial_\theta - \theta\partial_t$, and actions of
$\mathbb R^{1\vert1}$ correspond (bijectively, modulo noncompactness
issues) to odd vector fields.  Similarly, $\mathbb R^{0\vert1}$ has a
Lie algebra spanned by an odd element $\partial_\theta$ squaring to
$0$, and its actions correspond bijectively to homological vector
fields, i.e., those odd $Q$ such that $[Q, Q] = 0$.

The definition of Euclidean structures on supermanifolds follows the
philosophy of Felix Klein's Erlangen program.  One starts by fixing a
model space and a subgroup of diffeomorphisms, called the isometry
group; a Euclidean structure is then a maximal atlas whose transition
maps are isometries.  This idea is explained in detail in
\textcite[sections~2.5 and 4.2]{MR2742432}.  In (real) dimensions
$0\vert1$ and $1\vert1$, the model spaces are $\mathbb R^{0\vert1}$
respectively $\mathbb R^{1\vert1}$ with isometry groups
\begin{equation*}
  \operatorname{Isom}(\mathbb R^{0\vert1})
  = \mathbb R^{0\vert1} \rtimes \mathbb Z/2,
  \quad
  \operatorname{Isom}(\mathbb R^{1\vert1})
  = \mathbb R^{1\vert1}\rtimes\mathbb Z/2.
\end{equation*}
In both cases, $\mathbb Z/2$ acts by negating the odd coordinate and
$\mathbb R^{d\vert1}$ acts by \emph{left} multiplication (this choice
influences our sign conventions, and dictates whether to work with
left of right group actions at various places).

The differential form $d\theta \wedge d\theta$ on $\mathbb
R^{0\vert1}$ is invariant under isometries, and therefore determines a
canonical fiberwise $2$-form $\zeta$ on any family $\Sigma \to S$ of
Euclidean $0\vert1$-manifolds.  Conversely, any closed, nondegenerate,
even fiberwise $2$-form on $\Sigma$ is locally of this form and
determines a Euclidean structure.

In dimension $1\vert1$, Euclidean structures also admit ad hoc
definitions in terms of sections of certain sheaves.  In the remainder
of this section, we discuss some of those alternative definitions, and
study the stack of $1\vert1$-dimensional closed connected Euclidean
supermanifolds, which we will also call Euclidean supercircles.  This
appendix is a survey of material I learned from Stephan Stolz, some of
which does not seem to have appeared in the literature.

\subsection{Euclidean structures in dimension $1\vert1$}
\label{sec:eucl-struct-dimens-dim-1-1}

In \textcite[section~2.3]{MR2407109}, a conformal structure on a
$1\vert1$-manifold $X$ is defined to be a distribution $\mathcal D$
(i.e., a subsheaf of the tangent sheaf $T_X$) of rank $0\vert1$
fitting in a short exact sequence
\begin{equation}
  \label{eq:2}
  0 \to \mathcal D \to T_X \to \mathcal D^{\otimes 2} \to 0
\end{equation}
(see also \cite[lecture~3]{MR1707282}).  A Euclidean structure is then
defined to be a choice, up to sign, of an odd vector field $D$
generating $\mathcal D$.  The fundamental example is the vector field
$D = \partial_\theta - \theta \partial_t$ on $\mathbb R^{1\vert1}$.
Note that it squares to $-\partial_t$, so in fact $D$, $D^2$ generate
$T_{\mathbb R^{1\vert1}}$.  More generally, conformal and Euclidean
structures on a family $X \to S$ of $1\vert1$-manifolds are
appropriate splittings or sections of the vertical tangent bundle
$T_{X/S}$.

We want to show that this is equivalent to the original definition.
Denote by $\mathfrak E$ and $\mathfrak E'$ the stacks of families of
$1\vert1$-dimensional Euclidean manifolds according to the chart
definition respectively the vector field definition.  It is clear that
we have a map $\mathfrak E \to \mathfrak E'$, since the transition
maps of a Euclidean chart preserve the canonical vector field $D$ on
$\mathbb R^{1\vert1}$ up to sign.  Now, given an object in $\mathfrak
E'$, the atlas from proposition~\ref{prop:14} below is indeed
Euclidean, by propositions \ref{prop:8} and \ref{prop:15}.  This gives
an inverse map $\mathfrak E \to \mathfrak E'$.

\begin{proposition}
  \label{prop:8}
  The subgroup of diffeomorphisms of $\mathbb R^{1|1}$ preserving the
  form $\omega = dt - \theta d\theta$ is precisely
  $\operatorname{Isom}(\mathbb R^{1|1}) = \mathbb R^{1|1} \rtimes
  \mathbb Z/2$, acting in the standard way on the left.
\end{proposition}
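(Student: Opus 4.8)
The plan is to verify both inclusions by a direct coordinate computation, using the functor-of-points description of diffeomorphisms. Working over a test supermanifold $S$ and suppressing it from the notation, an arbitrary automorphism $\phi$ of $\mathbb R^{1|1}$ is determined by an even function $T$ and an odd function $\Theta$ giving the images of the coordinates $t,\theta$. Since $\theta^2=0$, these have the form
\begin{equation*}
  T = a + \theta\beta, \qquad \Theta = \gamma + \theta e,
\end{equation*}
where $a,e$ are even and $\beta,\gamma$ odd functions of $t$. By naturality of $d$ the pullback of $\omega$ is $\phi^*\omega = dT-\Theta\,d\Theta$, so the condition that $\phi$ be an isometry is the single equation $dT-\Theta\,d\Theta = dt-\theta\,d\theta$.

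First I would compute $dT$ and $d\Theta$ with the super Leibniz rule of our sign conventions, expand the product $\Theta\,d\Theta$, and collect the coefficients of $dt$ and of $d\theta$, each split into its $\theta$-independent and $\theta$-linear parts. Matching the $d\theta$-coefficient yields
\begin{equation*}
  \beta = e\gamma \qquad\text{and}\qquad e^2 = 1,
\end{equation*}
while matching the $dt$-coefficient yields an equation whose $\theta$-linear part forces $\gamma' = 0$ and whose $\theta$-independent part forces $a' = 1$.

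From here the identification is immediate. Since $e$ is even with $e^2=1$, it equals $\pm1$ on each connected component of the base (a putative nilpotent correction $n$ would satisfy $n(2+n)=0$ with $2+n$ invertible, so $n=0$); the two signs are exactly the reflection generating the $\mathbb Z/2$ factor. The equation $\gamma'=0$ says $\gamma$ is a $t$-independent odd constant, $a'=1$ says $a=t+c$ for an even constant $c$, and then $\beta=e\gamma$ is determined. Thus the solutions are parametrized precisely by triples $(c,\gamma,e)$ ranging over $\mathbb R^{1|1}\rtimes\mathbb Z/2$, and tracing through the group law identifies the corresponding map with left translation by $(c,\gamma)$, followed by the reflection $\theta\mapsto-\theta$ when $e=-1$. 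This gives the inclusion of the stabilizer of $\omega$ into $\Isom(\mathbb R^{1|1})$; the reverse inclusion is the assertion that these standard isometries preserve $\omega$, i.e.\ that $\omega$ is left-invariant, which one checks on generators (or simply reads off by specializing the computation above).

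The hard part will be bookkeeping the super sign conventions consistently, since it is exactly the sign in the Leibniz rule for $d$ on products of odd quantities that makes $dt-\theta\,d\theta$ (rather than $dt+\theta\,d\theta$) the left-invariant form and that forces $\gamma'=0$; with a naively ``unsigned'' Leibniz rule one inconsistently finds $\gamma$ unconstrained and an apparently larger stabilizer. A secondary point requiring care is reading the conclusions ``$e=\pm1$'', ``$\gamma$ constant'' and ``$a=t+c$'' correctly in the functor-of-points sense over a general, possibly nonreduced, base $S$.
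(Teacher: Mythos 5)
Your proposal is correct and follows essentially the same route as the paper: a direct functor-of-points coordinate computation, expanding the condition $\phi^*\omega=\omega$ in powers of $\theta$ and matching coefficients of $dt$ and $d\theta$, with the same care about nilpotents forcing $e=\pm1$ and about the Koszul sign in the super Leibniz rule. The only cosmetic difference is that the paper parametrizes the diffeomorphism as $x\mapsto\phi(x)\cdot x$ using the group law (so the conclusion reads ``$\phi$ is constant'' and compatibility with composition is then checked explicitly), whereas you work with the raw coordinate functions $T,\Theta$ and identify the resulting family with left translations and the flip afterwards.
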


A correct reading of this assertion requires that we think in
families; thus, the claim is that the subsheaf of $\Diff(\mathbb
R^{1\vert1}) \subset \underline{\mathrm{SM}}(\mathbb
R^{1\vert1},\mathbb R^{1\vert1})$ preserving $\omega$ is representable
by the Lie group $\mathbb R^{1\vert1} \rtimes \mathbb Z/2$.  Moreover,
it will be clear from the proof that the proposition is true locally
in $\mathbb R^{1\vert1}$, that is, if $U \subset\mathbb R^{1\vert1}$
is a connected domain, then the sheaf of embeddings $U \to \mathbb
R^{1\vert1}$ preserving $\omega$ is $\mathbb R^{1\vert1} \rtimes
\mathbb Z/2$.

\begin{proof}
  An $S$-family of diffeomorphisms of $\mathbb R^{1\vert 1}$ is given
  by a diffeomorphism
  \begin{equation*}
    \Phi\colon S \times \mathbb R^{1\vert 1} \to S
    \times \mathbb R^{1\vert 1}
  \end{equation*}
  commuting with the projections onto $S$.  We can express this
  diffeomorphism in terms of a map $\phi\colon S \times \mathbb
  R^{1\vert 1} \to \mathbb R^{1\vert 1}$ by the formula
  \begin{equation*}
    (s, x) \mapsto (s, \phi(s,x) \cdot x),
  \end{equation*}
  where $s$, $x$ should be interpreted as $T$-points of $S$ and
  $\mathbb R^{1\vert 1}$ for a generic supermanifold $T$, and $\cdot$
  indicates the usual group operation on $\mathbb R^{1\vert 1}$.
  Writing $\phi=(r,\eta) \in (\mathbb R \times \mathbb R^{0\vert
    1})_{S \times \mathbb R^{1\vert1}}$ and $x = (t, \theta) \in
  (\mathbb R \times \mathbb R^{0\vert 1})_T$ in terms of their
  components, the above formula becomes
  \begin{equation*}
    (s, t, \theta) \mapsto (s, t+r(s,t,\theta)+\eta(s,t,\theta)
    \theta, \eta(s,t,\theta) + \theta).
  \end{equation*}
  Hence the equation $\omega = \Phi^*\omega$ reads
  \begin{equation*}
    dt - \theta d\theta = dt +dr - \theta d\theta -
    (2\theta + \eta) d\eta.
  \end{equation*}
  To analyze the restrictions imposed by this equation, let us write
  \begin{equation*}
    r = r_0 + r_1\theta,\ \eta = \eta_1 +
    \eta_0\theta,\text{ where } r_i, \eta_i \in C^\infty(S\times
    \mathbb R)^i.
  \end{equation*} Then $dr - (2\theta + \eta)d\eta = 0$ gives us
  \begin{align}
    \label{eq:93} 0 = {} & dr_0 - \eta_1d\eta_1\\
    \label{eq:94} & + (dr_1 + (2+\eta_0) d\eta_1 - \eta_1d\eta_0)\theta\\
    \label{eq:95} & + (r_1 - \eta_1\eta_0) d\theta\\
    \label{eq:96} & - (2 + \eta_0)\eta_0 \theta d\theta.
  \end{align}
  Each individual line above vanishes.  From~\eqref{eq:96}, we get
  that $\eta_0 = 0$ or $-2$, since either $\eta_0$ or $(2+\eta_0)$ has
  nonzero reduced part and hence is invertible, and \eqref{eq:95}
  tells us that $r_1=\eta_0\eta_1$.  Plugging that into \eqref{eq:94},
  we get $(2+2\eta_0)d\eta_1=0$, so $d\eta_1=0$ since the factor in
  front of it is a nonzero constant.  Finally, \eqref{eq:93} implies
  that $dr_0=0$.

  Now, recall that those formulas should be interpreted as equalities
  of $S$-families of differential forms on $\mathbb R^{1\vert1}$,
  i.e., sections of $\Omega^*(S\times \mathbb R^{1\vert1})$ modulo
  $\Omega^{\geq 1}(S)$.  So in fact we have $r_0, \eta_1 \in
  C^\infty(S)$, and there is a locally constant function $a = 1 +
  \eta_0 \in (\mathbb Z/2)_S = \{\pm 1\}_S$.  Therefore the
  diffeomorphism $\Phi$ determines and is determined by $(r_0, \eta_1,
  a) \in (\mathbb R^{1\vert 1} \rtimes \mathbb Z/2)_S$ via the
  correspondence
  \begin{equation*} (r_0, \eta_1, a) \mapsto \phi_{r_0, \eta_1, a} =
    (r_0+(a-1)\eta_1\theta, \eta_1+(a-1)\theta) \in \mathbb
    R^{1\vert1}_{S\times \mathbb R^{1\vert1}}.
  \end{equation*}

  It is simple to check that any choice of $(r_0, \eta_1, a)$ as above
  determines a diffeomorphism preserving $\omega$, and that the
  choices $(r_0,\eta_1,1)$ respectively $(0,0,-1)$ act as translation
  by $(r_0,\eta_1)$ respectively negation of the odd variable.
  Therefore, to finish the proof, we just need to verify that given a
  second diffeomorphism $\Phi'$ prescribed, in a similar way, by
  $(r_0', \eta_1', a')$, the composition
  \begin{equation*}
    (s,t,\theta) \overset\Phi\mapsto 
    \phi_{r_0, \eta_1,a}(s,\theta) \cdot (s,t,\theta) \overset{\Phi'}\mapsto
    \phi_{r_0', \eta_1',a'}(s,\theta')\cdot \phi_{r_0, \eta_1,a}(s,\theta)\cdot (s,t,\theta),
  \end{equation*}
  where $\theta' = \eta+(a-1)\theta$ is the $\theta$-component of the
  middle term, agrees with the action of the product $(r_0',\eta_1',
  a') \cdot (r_0,\eta_1,a)$; more explicitly,
  \begin{equation*}
    \phi_{(r_0' + r_0 + a'\eta_1'\eta_1, \eta_1'+a'\eta_1,a'a)}
    (s,\theta)= \phi_{r_0',
      \eta_1',a}(s,\eta+(a-1)\theta)\cdot\phi_{r_0, \eta_1,a}(s,\theta).
  \end{equation*}
  This is a tedious but straightforward calculation.
\end{proof}

\begin{proposition}
  \label{prop:15}
  A diffeomorphism of $\mathbb R^{1\vert1}$ preserves $\omega = dt
  -\theta d\theta$ if and only if it preserves $D = \partial_\theta -
  \theta\partial_t$ up to sign.
\end{proposition}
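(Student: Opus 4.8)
The plan is to make the equivalence fall out of a single intrinsic relationship between $\omega$ and $D$. First I would record two identities, each checked by a short direct computation (the only delicate point being the Koszul signs in the odd contraction): that $\iota_D\omega = 0$, so that $D$ generates the rank $0\vert1$ kernel distribution $\mathcal D = \Ker\omega$, and that $\omega(D^2) = -1$, which follows from $D^2 = -\partial_t$ together with $\iota_{\partial_t}\omega = 1$. These two facts pin $\omega$ down, relative to $D$, as the unique even $1$-form annihilating $\mathcal D$ and taking the value $-1$ on $D^2$.

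For the forward implication, suppose $\Phi^*\omega = \omega$. Then $\Phi$ preserves $\Ker\omega = \langle D\rangle$, so $\Phi_*D = fD$ for an invertible even function $f$ (both $D$ and $\Phi_*D$ being odd generators of $\mathcal D$). Since $\Phi_*$ is an automorphism of the Lie superalgebra of vector fields, $\Phi_*(D^2) = \tfrac12[\Phi_*D,\Phi_*D] = (fD)^2 = f(Df)\,D + f^2 D^2$. Evaluating $\omega$ on this and using the two identities gives $\omega(\Phi_*D^2) = f(Df)\cdot 0 + f^2\cdot(-1) = -f^2$, whereas $\Phi^*\omega = \omega$ together with the constancy of $\omega(D^2) = -1$ forces $\omega(\Phi_*D^2) = -1$. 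Hence $f^2 = 1$; as $f$ is even, its reduced part is a locally constant $\pm1$ and its nilpotent part vanishes, so $f = \pm1$ and $\Phi_*D = \pm D$.

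For the converse, suppose $\Phi_*D = \pm D$. Then $\Phi_*(D^2) = (\pm D)^2 = D^2$, and $\Phi$ again preserves the line field $\langle D\rangle = \Ker\omega$, so $\Phi^*\omega = g\omega$ for an invertible even $g$ (two even $1$-forms with the same kernel differ by an invertible even factor, since each factors through the even line $T/\mathcal D$). Evaluating both sides on $D^2$ then gives $-g = g\,\omega(D^2) = (\Phi^*\omega)(D^2) = \Phi^*\big(\omega(\Phi_*D^2)\big) = \Phi^*(\omega(D^2)) = -1$, whence $g = 1$ and $\Phi^*\omega = \omega$.

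The main obstacle is purely the super-sign bookkeeping in the two foundational identities (especially $\iota_D\omega = 0$) and in the expansion of $(fD)^2$; once these are in place the rest is formal. I also note that the forward direction can be read off from proposition~\ref{prop:8} directly: the left translations of $\mathbb R^{1\vert1}$ fix the left-invariant field $D$, while the flip $\theta \mapsto -\theta$ sends $D$ to $-D$, so every element of $\Isom(\mathbb R^{1\vert1}) = \mathbb R^{1\vert1}\rtimes\mathbb Z/2$ preserves $D$ up to sign.
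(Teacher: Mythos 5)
Your proof is correct, and the two directions compare differently with the paper's argument. For the converse (preserving $D$ up to sign implies preserving $\omega$) you are doing essentially what the paper does: both arguments come down to evaluating $\Phi^*\omega$ against $D$ and $D^2$, using $\iota_D\omega = 0$, $\omega(D^2) = -1$, and the fact that $D$, $D^2$ generate the tangent sheaf as a module; your detour through the conformal factor $g$ is just a repackaging of that evaluation. The forward direction is where you genuinely diverge. The paper simply quotes proposition~\ref{prop:8}: an $\omega$-preserving diffeomorphism is given by an element of $\mathbb R^{1\vert1}\rtimes\mathbb Z/2$, left translations fix the left-invariant field $D$, and the flip negates it --- which is exactly the shortcut you mention in your closing sentence. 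Your main argument instead stays intrinsic: $\Phi_*D = fD$ because $\Phi$ preserves $\Ker\omega = \langle D\rangle$, the expansion $(fD)^2 = f(Df)D + f^2D^2$ together with $\omega(D^2) = -1$ forces $f^2 = 1$, and an even unit squaring to $1$ equals $\pm 1$ (the nilpotent part $n$ dies because $n(2f_{\mathrm{red}}+n)=0$ with the second factor invertible --- worth writing out). What this buys you is independence from the explicit classification of the $\omega$-preserving group, so the argument applies verbatim to any family carrying a Euclidean structure in the vector-field sense rather than just the model space $\mathbb R^{1\vert1}$; the cost is the Koszul-sign bookkeeping in $\iota_D\omega = 0$ and in $(fD)^2$, which the paper's route avoids entirely by leaning on the computation already done in proposition~\ref{prop:8}. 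Both routes are sound.
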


\begin{proof}
  If an $S$-family of diffeomorphisms $\Phi\colon S \times \mathbb
  R^{1\vert1} \to S \times \mathbb R^{1\vert1}$ preserves $\omega$,
  then it is determined by $\phi \in (\mathbb R^{1\vert1} \rtimes
  \mathbb Z/2)_S$ and it is easy to check that it sends $D$ to either
  $D$ or $-D$.  Conversely, if $\Phi_*D = \pm D$, then $\Phi_* D^2 =
  (\pm D)^2$, so that
  \begin{equation*}
    \langle D, \Phi^*\omega\rangle = \langle \Phi_*D,\omega\rangle=0, 
    \quad \langle D^2, \Phi^*\omega \rangle = \langle D^2, \omega \rangle.
  \end{equation*}
  Since $D,D^2$ generate $T_{\mathbb R^{1\vert1}}$ as a
  $C^\infty_{\mathbb R^{1\vert1}}$-module, it follows that
  $\Phi^*\omega = \omega$.
\end{proof}

\begin{proposition}
  \label{prop:14}
  Let $X \to S$ be an $S$-family of $1\vert1$-manifolds and $D$ a
  vertical vector field generating a distribution as in \eqref{eq:2}.
  Then $X$ admits an atlas such that $D$ can be written locally as
  $\partial_\theta - \theta \partial_t$.
\end{proposition}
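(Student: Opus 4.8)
The plan is to build, around an arbitrary point of $X$, a single chart in which $D = \partial_\theta - \theta\partial_t$; since the statement is local and fiberwise over $S$, the collection of all such charts will be the required atlas. I would straighten $D$ in two stages: first its square $D^2 = \tfrac12[D,D]$, which is an \emph{even} vertical vector field, and then $D$ itself. Two structural facts drive the argument. The exactness of \eqref{eq:2} is exactly the assertion that $D$ and $D^2$ form a local frame of $T_{X/S}$; in particular $D^2$ has nowhere-vanishing reduced part. Moreover, the super Jacobi identity forces $[D,D^2] = 0$ for \emph{any} odd vector field, since it yields $3[D,[D,D]] = 0$.

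The first stage is to apply the flow-box normal form to the even field $D^2$, working in the $S$-family setting: integrating its fiberwise flow yields coordinates $(t,\xi)$, with $t$ even and $\xi$ odd, in which $D^2 = -\partial_t$. Because $[D,\partial_t] = -[D,D^2] = 0$, the coefficients of $D$ are $t$-independent, so $D = P\partial_t + Q\partial_\xi$ with $P = p_0 + p_1\xi$ odd, $Q = q_0 + q_1\xi$ even, and all $p_i, q_i \in C^\infty(S)$. Imposing $D^2 = -\partial_t$, and using that $P^2 = 0$ and that $P$, $Q$ commute (so the second-order terms cancel), a short computation gives $D^2 = Qp_1\partial_t + Qq_1\partial_\xi$; hence $q_0 p_1 = -1$ and $q_1 = 0$. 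In particular $q_0$ is a unit and $Q = q_0$ is pulled back from $S$.

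The second stage consists of two explicit coordinate changes. Setting $\theta = \xi/q_0$ gives $D\theta = q_0^{-1}D\xi = 1$, and since $p_1\xi = -\theta$ we have $Dt = P = p_0 - \theta$. Setting $t' = t + p_0\theta$ then gives $Dt' = Dt + D(p_0\theta) = (p_0 - \theta) - p_0 = -\theta$, where $D(p_0\theta) = -p_0\, D\theta = -p_0$ because $D p_0 = 0$ ($D$ is vertical and $p_0$ comes from $S$). As $(t',\theta)$ is a fiberwise coordinate system (the change from $(t,\xi)$ is triangular with unit entries), and a vertical field is recovered from its action on coordinates as $D = (Dt')\partial_{t'} + (D\theta)\partial_\theta$, this reads $D = \partial_\theta - \theta\partial_{t'}$, as desired.

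The main obstacle is the first stage: one must establish the flow-box normal form for the even vertical field $D^2$ in the super family setting, i.e.\ integrate its flow in the presence of the nilpotents coming from both $S$ and the odd direction. Once $D^2 = -\partial_t$ is arranged, the identity $[D,D^2] = 0$ does the heavy lifting of removing all $t$-dependence, and the remaining steps are the elementary bookkeeping above.
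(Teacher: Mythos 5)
Your proof is correct, and its skeleton coincides with the paper's: both first straighten $D^2$ to $-\partial_t$ (the paper invokes the Frobenius theorem of \cite{MR1701597} for exactly the fiberwise flow-box step you flag as the remaining obstacle, so this is a citation rather than a gap) and both finish with an explicit change of coordinates. The middle step, however, is genuinely different and is where your argument is cleaner. The paper allows the coefficients of $D$ to depend on $t$, expands $D^2=-\partial_t$ into four coupled equations in $f_0,f_1,g_0,g_1$, and solves them one by one. You instead observe that $[D,D^2]=0$ for any odd vector field, hence $[D,\partial_t]=0$ in the straightened chart, which kills all $t$-dependence at the outset and reduces the constraint to the one-line identity $D^2=Qp_1\partial_t+Qq_1\partial_\xi$. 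This buys a shorter computation and makes the structure of the solution ($Q=q_0$ invertible, $p_1=-q_0^{-1}$, $p_0$ a free odd parameter) transparent. A further point in your favor: your second coordinate change $t'=t+p_0\theta$ explicitly eliminates the residual odd translation term $p_0\partial_t$, whereas the paper's summary formula $D=f_0\partial_\theta-f_0^{-1}\theta\partial_t$ silently drops the corresponding term $g_1\partial_t$ (its equations only force $g_1'=0$, not $g_1=0$), so the analogue of your shift is in fact needed there too. All the sign bookkeeping in your computation ($P^2=0$, $D(p_0\theta)=-p_0$, $D=(Dt')\partial_{t'}+(D\theta)\partial_\theta$) checks out.
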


\begin{proof}
  We apply the Frobenius theorem \cite[lemma 3.5.2]{MR1701597} to the
  vector field $D^2$.  This gives us local charts $(t, \theta) \colon
  U \subset X \to S\times\mathbb R^{1\vert1}$ where $D^2$ gets
  identified with $-\partial_t$.  With respect to one of those charts,
  we can write
  \begin{equation*}
    D = f \partial_\theta + g\partial_t, \quad f = f_0+f_1\theta,
    \quad g = g_1+g_0\theta,
  \end{equation*}
  where $f_i,g_i \in C^\infty(S \times \mathbb R)^i$, so that
  \begin{equation*}
    D^2 = f(\partial_\theta f) \partial_\theta +
    f(\partial_\theta g) \partial_t + g(\partial_tf)\partial_\theta +
    g(\partial_t g) \partial_t
  \end{equation*}
  (the remaining terms one could expect in this expansion involve
  $\partial_\theta^2$, $g^2$, or $[\partial_\theta, \partial_t]$, so
  they vanish).  Inspecting the coefficients of $\partial_t$,
  $\theta \partial_t$, $\partial_\theta$, and $\theta\partial_\theta$
  respectively, we get
  \begin{align*}
    f_0g_0 + g_1g_1' = -1, &\quad f_1g_0 + g_1g_0' - g_0g_1' = 0,\\
    -f_0f_1 + g_1f_0' = 0, &\quad g_1f_1' + g_0f_0' = 0.
  \end{align*}
  The first equation implies that $f_0$, $g_0$ are invertible, and the
  fourth equation implies that $g_1g_0f_0' = 0$.  Multiplying the
  third equation by $g_0$ gives us $g_0f_0f_1 = 0$, so $f_1 = 0$.
  Using again the fourth equation, we conclude that $f_0' = 0$.
  Therefore (first equation), $g_0'$ is a multiple of $g_1$ and the
  second equation reduces to $g_0g_1' = 0 = g_1'$.  Finally, we learn
  from the first equation that $f_0$ and $-g_0$ are inverses.  To
  summarize, we have
  \begin{equation*}
    D = f_0\partial_\theta - f_0^{-1}\theta\partial_t, \text{ where }
    f_0\in C^\infty(S)^{\mathrm{even}}.
  \end{equation*}
  Performing the change of coordinates $t \mapsto t$, $\theta \mapsto
  f_0^{-1}\theta$, we can assume $f_0 = 1$, which finishes the proof.
\end{proof}

\subsection{Euclidean supercircles}
\label{sec:supercircles}

We are interested in the stack $\mathfrak K$ of closed connected
$1\vert1$-dimensional Euclidean manifolds.  Given a parameter
supermanifold $S$ and a map $l\colon S \to \mathbb R^{1\vert1}_{>0}$,
we can form the $S$-family of supercircles of length $l$, $K_l = (S
\times \mathbb R^{1\vert1})/\mathbb Z$, where the generator of the
$\mathbb Z$-action is described, in terms of $T$-points of $S \times
\mathbb R^{1\vert1}$, by $(s, u) \mapsto (s, l(s) \cdot u)$.
Moreover, given any map $r\colon S \to \mathbb R^{1\vert1}$, the
diffeomorphism of $S \times \mathbb R^{1\vert1}$, $(s,u)\mapsto (s,
r(s) \cdot u)$ descends to an isometry $K_{r^{-1}lr} \to K_l$, and the
flip $\operatorname{fl}\colon\mathbb R^{1\vert1} \to \mathbb
R^{1\vert1}$ (the diffeomorphism negating the odd coordinate) descends
to an isometry $K_{\operatorname{fl}(l)} \to K_l$, since $\mathrm{fl}$
is a group automorphism of $\mathbb R^{1\vert1}$.

We can assemble this collection of examples into a Lie groupoid as
follows.  Note that the right $\mathbb R^{1\vert1}$-action on itself
by conjugation extends to an action of the semidirect product $\mathbb
Z/2 \ltimes \mathbb R^{1\vert1}$ where $\mathbb Z/2$ acts via
$\mathrm{fl}$.  It is then clear that we have a map of stacks $\mathbb
R^{1\vert1}_{>0}\sslash(\mathbb Z/2 \ltimes \mathbb R^{1\vert1}) \to
\mathfrak K$.  To the $S$-point of the domain corresponding to a map
$l\colon S \to \mathbb R^{1\vert1}_{>0}$, it assigns $K_l$, and to the
morphism corresponding to the $S$-point $(a,r)$ of $\mathbb Z/2
\ltimes \mathbb R^{1\vert1}$, it assigns the isometry $K_{r^{-1}
  \mathrm{fl}^a(l) r} \to K_l$.  This only fails to be an equivalence
of stacks because the $S$-family of morphisms $(0, l)\colon l \to l$
in the domain stack maps to the identity map of $K_l$.
\begin{proposition}
  \label{prop:3}
  The fibered functor $\mathbb R^{1\vert1}_{>0}\sslash(\mathbb Z/2
  \ltimes \mathbb R^{1\vert1}) \to \mathfrak K$ is full and
  essentially surjective.
\end{proposition}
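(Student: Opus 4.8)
The plan is to run the classical developing-map argument for $(\Isom(\mathbb R^{1\vert1}), \mathbb R^{1\vert1})$-structures, adapted to families of supermanifolds, using proposition~\ref{prop:14} to trivialize the geometry locally and proposition~\ref{prop:8} to recognize isometries as group elements. Since essential surjectivity is a local condition in $S$, I may assume $S$ is connected and, after shrinking, that the reduced circle bundle $K_{\mathrm{red}}\to S_{\mathrm{red}}$ is trivial. First I would form the fiberwise universal cover $\pi\colon \tilde K \to K$ of the given $S$-family $K$, whose fibers are the universal covers of the circle fibers and whose fiberwise deck group is $\mathbb Z$. Using the charts of proposition~\ref{prop:14}, in which $D$ takes the standard form $\partial_\theta - \theta\partial_t$, the Euclidean structure furnishes an atlas on $K$ whose transition functions lie in $\Isom(\mathbb R^{1\vert1})$; developing these charts along $\tilde K$ produces a fiberwise local isometry $\mathrm{dev}\colon \tilde K \to \mathbb R^{1\vert1}$. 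Because the fibers of $K$ are compact the Euclidean structure is complete, and together with simple-connectedness of the fibers of $\tilde K$ this upgrades $\mathrm{dev}$ to a fiberwise isometry $\tilde K \cong S\times\mathbb R^{1\vert1}$.

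Transporting the $\mathbb Z$-deck action through this isomorphism yields a free $\mathbb Z$-action on $S\times\mathbb R^{1\vert1}$ by isometries, hence, by proposition~\ref{prop:8}, generated by an element $\mathrm{hol}\in(\mathbb R^{1\vert1}\rtimes\mathbb Z/2)_S$. Its reduced part is translation by the fiberwise circumference, which is an invertible positive function, so choosing the positive generator I may arrange that $\mathrm{hol}$ is left translation by some $l\colon S\to\mathbb R^{1\vert1}_{>0}$ (any residual flip component being recorded, at the level of the target groupoid, by the isometry $K_{\mathrm{fl}(l)}\cong K_l$ coming from the $\mathbb Z/2$ in the structure group). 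Quotienting by the deck action then gives $K\cong K_l$, an object in the image, which proves essential surjectivity.

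For fullness, consider an isometry $F\colon K_{l'}\to K_l$ covering $f\colon S'\to S$. Pulling $K_l$ back along $f$, exactly as in the proof of proposition~\ref{prop:7}, I reduce to the case $S'=S$ and $f=\id$. I would then lift $F$ to the fiberwise universal covers: choosing a lift of a local section, $F$ lifts to a fiberwise isometry $\tilde F\colon S\times\mathbb R^{1\vert1}\to S\times\mathbb R^{1\vert1}$, which by proposition~\ref{prop:8} is given by an $S$-point $(r,a)$ of $\mathbb R^{1\vert1}\rtimes\mathbb Z/2$. Since $\tilde F$ must intertwine the two deck actions---generated by left translation by $l'$ and by $l$---comparing the two sides yields exactly the relation $r^{-1}\mathrm{fl}^a(l')r = l$ pinning down the conjugacy class, so that $(a,r)$ is precisely the data of a morphism in $\mathbb R^{1\vert1}_{>0}\sslash(\mathbb Z/2\ltimes\mathbb R^{1\vert1})$ whose image under the functor is $F$. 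The ambiguity in the lift is a power of the deck transformation, i.e.\ a morphism of the form $(0,l^n)$; this is the source of the functor's failure to be faithful, but it does not obstruct fullness.

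The one genuinely nontrivial point is the construction of the developing map and the accompanying completeness statement in the first paragraph: one must verify that the standard charts of proposition~\ref{prop:14} glue along the fiberwise universal cover into a single fiberwise isometry onto $S\times\mathbb R^{1\vert1}$, which is where the super-analogue of completeness for compact Euclidean manifolds is used. Once $\tilde K\cong S\times\mathbb R^{1\vert1}$ is established, the remaining steps---recognizing the holonomy and the lift $\tilde F$ as elements of $\Isom(\mathbb R^{1\vert1})$ via proposition~\ref{prop:8} and reading off the conjugacy relation between $l$ and $l'$---are routine bookkeeping.
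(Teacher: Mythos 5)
Your argument is correct and, in outline, the same as the paper's: fullness is proved identically (lift an isometry $K_{l'}\to K_l$ to the covers $S\times\mathbb R^{1\vert1}$ and apply proposition~\ref{prop:8}), and essential surjectivity in both cases comes down to identifying the fiberwise universal cover of a given $K$ with $S\times\mathbb R^{1\vert1}$ and recognizing the deck generator as an $S$-point of $\mathbb R^{1\vert1}\rtimes\mathbb Z/2$. The one step you explicitly leave unverified---that the charts of proposition~\ref{prop:14} develop into a global fiberwise isometry $\tilde K\cong S\times\mathbb R^{1\vert1}$---is exactly where the paper takes a more economical route: it fixes (locally in $S$) a section $x\colon S\to K$ and a global generator $D_K$ of the Euclidean structure, integrates $D_K$ to an $\mathbb R^{1\vert1}$-action $\mu$ on $K$ (complete because the fibers are compact), and observes that the orbit map $\mu_x\colon\mathbb R^{1\vert1}\times S\to K$ is a local diffeomorphism, hence a covering; the minimal period $l$ of $x$ then yields $K_l\cong K$ directly, with no abstract completeness theorem needed. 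One genuine inaccuracy in your write-up is the parenthetical about a ``residual flip component'' of the holonomy: the isomorphism $K_{\mathrm{fl}(l)}\cong K_l$ relates two quotients by \emph{pure translations}, so it cannot absorb a deck generator with nontrivial $\mathbb Z/2$-part---such a generator would produce a supercircle that is not of the form $K_l$ at all. The paper sidesteps this by choosing an actual vector field $D_K$ (not just $\pm D_K$); the flow $\mu$ then intertwines $D$ with $D_K$ on the nose, forcing the deck generator to be a translation. You should make the analogous choice before developing, rather than trying to dispose of the flip after the fact.
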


\begin{proof}
  Any isometry $K_{l'} \to K_l$ lifts to an isometry of their covers
  $S \times \mathbb R^{1\vert1} \to S \times \mathbb R^{1\vert1}$.  It
  follows from proposition~\ref{prop:8} that the isometry group of
  $\mathbb R^{1\vert1}$ is (no bigger than) $\mathbb R^{1\vert1}
  \rtimes \mathbb Z/2$, and this proves fullness.

  It remains to show that our fibered functor is essentially
  surjective.  Pick any $K \in \mathfrak K_S$.  Restricting to a
  neighborhood in $S$ if needed, fix a section $x\colon S \to K$ and a
  vector field $D_K$ specifying the Euclidean structure.  Then $D_K$
  gives us an action $\mu\colon \mathbb R^{1\vert1} \times K \to K$;
  composing with $x$, we get a map of $S$-families
  \begin{equation*}
    \mu_x\colon
    \mathbb R^{1\vert1}\times S \xrightarrow{\mathrm{id}\times x}
    \mathbb R^{1\vert1}\times K \overset\mu\to K.
  \end{equation*}
  Since the generators $D, D^2$ of the Lie algebra of $\mathbb
  R^{1\vert1}$ are $\mu$-related to the linearly independent vector
  fields $D_K, D_K^2$, $\mu_x$ is a local diffeomorphism.  Thus we can
  find a function $l\colon S \to \mathbb R^{1\vert1}_{>0}$ which is
  minimal, pointwise in $S$, among those $l$ such that $\mu(l, x) =
  x$.  Therefore $\mu_x$ factors through a diffeomorphism $K_l =
  (\mathbb R^{1\vert1} \times S)/\mathbb Zl \to K$.
\end{proof}

\begin{remark}
  At least locally, an $S$-family in $\mathfrak K$ is determined, up
  to isomorphism, by a conjugacy class in $(\mathbb R^{1\vert1}_{>0})_S$.
  However, an actual ``length'' function $l\colon S \to \mathbb
  R^{1\vert1}_{>0}$ is extra information, determined for instance by a
  basepoint (i.e., a section of the submersion $K \to S$).  In
  particular, the coarse moduli space of Euclidean supercircles is not
  a representable supermanifold.
\end{remark}

Each conjugation-invariant (generalized) submanifold of $\mathbb
R^{1\vert1}_{>0}$ gives rise to a full substack of $\mathbb
R^{1\vert1}_{>0}\sslash(\mathbb Z/2 \ltimes \mathbb R^{1\vert1})$, and
therefore to a full substack of $\mathfrak K$.  Here we are interested
in the choice $\{ 1 \} \subset \mathbb R^{1\vert1}_{>0}$, and we let
\begin{math}
  \mathfrak K_1 \subset \mathfrak K
\end{math}
denote the substack of supercircles of length $1$.  Recall also the
definition of $\mathfrak B^{\mathbb T} = \mathfrak B^{\mathbb
  T}(\mathrm{pt})$ from section~\ref{sec:t-equiv-bord}.

\begin{theorem}
  \label{thm:K1-versus-BT}
  There in an equivalence of stacks $\mathfrak K_1 \cong \mathfrak
  B^{\mathbb T}$.
\end{theorem}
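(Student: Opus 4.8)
The plan is to prove the theorem by exhibiting the explicit functor underlying Theorem~\ref{thm:1} and checking that it is an equivalence on a presenting atlas. Concretely, I will define a fibered functor $\mathcal F\colon \mathfrak B^{\mathbb T}\to\mathfrak K_1$ that sends an object $(\Sigma, P, \omega)$ to the total space $P$, viewed as a family of length-$1$ supercircles once it is endowed with the Euclidean structure determined by $\omega$, and that sends a morphism $(F,\Phi)$ to the map of supercircles induced by the connection-preserving bundle map $\Phi$. Since both $\mathfrak K_1$ and $\mathfrak B^{\mathbb T}$ are differentiable stacks with a single object up to local isomorphism (each is a ``quotient of a point''), it suffices to show that $\mathcal F$ is essentially surjective locally in $S$ and that it induces an isomorphism on the automorphism group of a presenting object.

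First I would pin down the standard model. On $\mathbb T^{1\vert1}=\mathbb R^{1\vert1}/\mathbb Z$ with coordinates $(t,\theta)$, the form $\omega=dt-\theta\,d\theta$ is a $\mathbb T$-connection on the bundle $\mathbb T^{1\vert1}\to\mathbb R^{0\vert1}$, $(t,\theta)\mapsto\theta$, whose curvature is the tautological form $\zeta$; the Euclidean vector field $D=\partial_\theta-\theta\partial_t$ projects to the generator $\partial_\theta$ of the $0\vert1$-Euclidean structure on $\mathbb R^{0\vert1}$ and satisfies $D^2=-\partial_t$, the infinitesimal generator of the fiberwise $\mathbb T$-action. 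This is the object $(\mathbb R^{0\vert1},\mathbb T^{1\vert1},\omega)\in\mathfrak B^{\mathbb T}_{\pt}$, and $\mathcal F$ sends it to the standard length-$1$ supercircle $K_1$. To define $\mathcal F$ in general, I would work locally in $S$ and $\Sigma$: choosing a Euclidean coordinate on $\Sigma\cong S\times\mathbb R^{0\vert1}$ and a gauge for $P$, the curvature constraint $d\omega=\zeta$ normalizes $\omega$ to $dt-\theta\,d\theta$, so that $P\cong S\times\mathbb T^{1\vert1}$ carries the vector field $D$ above. The two essential well-definedness checks---that the resulting Euclidean structure on $P$ is independent of the trivialization, and that the local data glue to a map of stacks---are both governed by proposition~\ref{prop:8}: the $\omega$-preserving diffeomorphisms of $\mathbb R^{1\vert1}$ are exactly $\Isom(\mathbb R^{1\vert1})=\mathbb R^{1\vert1}\rtimes\mathbb Z/2$, which by proposition~\ref{prop:15} are precisely the maps preserving $D$ up to sign. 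This simultaneously yields essential surjectivity (every $(\Sigma,P,\omega)$ is locally the standard model) and the second assertion of Theorem~\ref{thm:1}.

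Next I would present $\mathfrak K_1$. Applying proposition~\ref{prop:3} over the full substack on the length $l=1=(1,0)$, and using that $(1,0)$ is central in $\mathbb R^{1\vert1}$ and fixed by the flip---so that the entire group $\mathbb Z/2\ltimes\mathbb R^{1\vert1}$ stabilizes it---I get a full, essentially surjective functor $\pt\sslash(\mathbb Z/2\ltimes\mathbb R^{1\vert1})\to\mathfrak K_1$. Its only failure of faithfulness is the central $\mathbb Z$ generated by the unit-length translation $(0,(1,0))$, which descends to the identity deck transformation of $K_1$; hence the functor induces an equivalence $\pt\sslash\Isom(\mathbb T^{1\vert1})\xrightarrow{\cong}\mathfrak K_1$, where $\Isom(\mathbb T^{1\vert1})=(\mathbb Z/2\ltimes\mathbb R^{1\vert1})/\mathbb Z=\mathbb T^{1\vert1}\rtimes\mathbb Z/2$. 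In particular $\Aut_{\mathfrak K_1}(K_1)\cong\Isom(\mathbb T^{1\vert1})$.

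Finally I would match automorphism groups. An automorphism of the standard object of $\mathfrak B^{\mathbb T}$ is a pair $(F,\Phi)$ with $F\in\Isom(\mathbb R^{0\vert1})$ and $\Phi$ a connection-preserving $\mathbb T$-bundle automorphism of $\mathbb T^{1\vert1}$ covering $F$. By propositions~\ref{prop:8} and~\ref{prop:15} the $\omega$-preserving diffeomorphisms of $\mathbb R^{1\vert1}$ are the left translations and the flip; these are right-$\mathbb T$-equivariant and cover isometries of $\mathbb R^{0\vert1}$, and passing to the quotient by the central $\mathbb Z$ they yield exactly $\Isom(\mathbb T^{1\vert1})$. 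Thus $\Aut_{\mathfrak B^{\mathbb T}}(\text{std})\cong\Isom(\mathbb T^{1\vert1})$, and by construction $\mathcal F$ carries $\Phi$ to the same underlying self-map of $\mathbb T^{1\vert1}=K_1$, so $\mathcal F$ induces the identity of $\Isom(\mathbb T^{1\vert1})$ on automorphism groups. Being essentially surjective and an isomorphism on the automorphisms of a presenting atlas, $\mathcal F$ is an equivalence. \emph{The main obstacle} is the content shared with Theorem~\ref{thm:1}: verifying that the connection $\omega$ with curvature $\zeta$ really produces a well-defined Euclidean $1\vert1$-structure on $P$, independent of all local choices, and that isometries correspond exactly to connection-preserving bundle maps---everything here is ultimately controlled by the rigidity computation of proposition~\ref{prop:8}.
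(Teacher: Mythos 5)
Your proof is correct and rests on the same three ingredients as the paper's: proposition~\ref{prop:3} to present $\mathfrak K_1$ as $\pt\sslash\Isom(\mathbb T^{1\vert1})$, propositions~\ref{prop:8} and~\ref{prop:15} to compute automorphism groups and control transition functions, and the normalization of the connection to $dt-\theta\,d\theta$. The organizational difference is that the paper never constructs a direct functor between the two stacks: it exhibits $\pt\sslash\Isom(\mathbb T^{1\vert1})$ as equivalent to each of $\mathfrak K_1$ and $\mathfrak B^{\mathbb T}$ separately (the latter by sending the point to the standard object $(S\times\mathbb R^{0\vert1},\,S\times\mathbb R^{0\vert1}\times\mathbb T,\,dt-\theta\,d\theta)$, checking full faithfulness via proposition~\ref{prop:8} and essential surjectivity via the gauge argument), so the equivalence of the theorem is the resulting zigzag. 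Your direct functor $\mathcal F$ is essentially the content of theorem~\ref{thm:1}, which the paper instead derives as a corollary of this proof; your route therefore front-loads the well-definedness and gluing checks that the zigzag avoids, but in exchange makes theorem~\ref{thm:1} immediate rather than implicit. The one step you assert without verifying is the normalization of the connection: for a fixed trivialization the curvature condition only forces $\omega = dt + (f_1 - \theta)\,d\theta$ with $f_1\in C^\infty(S)$ odd, and one must still apply the gauge transformation $(s,\theta,t)\mapsto(s,\theta,t-f_1(s)\theta)$ to reach the standard form; since this is the only explicit computation in the paper's proof, it should not be elided.
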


\begin{proof}
  The fibered functor of proposition~\ref{prop:3} factors through an
  equivalence
  \begin{equation*}
    \mathrm{pt}\sslash(\mathbb Z/2 \ltimes \mathbb T^{1\vert1})
    \to \mathfrak K_1.
  \end{equation*}
  On the other hand, consider the $S$-point of $\mathfrak B^{\mathbb
    T}$ determined by the trivial family $\Sigma = S \times \mathbb
  R^{0\vert1}$ and the trivial bundle $P = \Sigma \times \mathbb T$
  with the standard connection $\omega = dt - \theta d\theta$.  It is
  easy to see, using proposition~\ref{prop:8}, that the automorphism
  group of $(P,\Sigma) \in \mathfrak B^{\mathbb T}_S$ is precisely
  $\Isom(\mathbb T^{1\vert1})$.  This determines a fully faithful
  fibered functor
  \begin{equation*}
    \mathrm{pt}\sslash(\mathbb Z/2 \ltimes \mathbb T^{1\vert1})
    \to \mathfrak B^{\mathbb T}.
  \end{equation*}
  It only remains to check that it is also essentially surjective.
  For contractible $S$ and any object of $\mathfrak B^{\mathbb T}_S$,
  we can assume the underlying bundles $\Sigma \to S$ and $P \to
  \Sigma$ are trivial.  So we just need to prove that the connection
  $\omega$ on $P$ can be taken to be the standard one.

  In general, a (fiberwise) connection on $P$ can be written as
  $\omega = dt + (f_1 + f_0 \theta) d\theta$ for functions $f_i \in
  C^\infty(S)$ of parity $i$.  The curvature condition imposes that
  $f_0 = -1$.  Under the gauge transformation of $P = S \times \mathbb
  R^{0\vert1} \times \mathbb T$ given by $(s, \theta, t) \mapsto (s,
  \theta, t - f_1(s)\theta)$, the connection $\omega$ pulls back to
  the standard $dt - \theta d\theta$.
\end{proof}

\printbibliography

\end{document}